%% file: main.tex
\documentclass[11pt]{article}

\usepackage[a4paper,margin=1in]{geometry}
\usepackage{amsmath,amssymb,amsthm,mathtools}
\usepackage{enumitem}
\usepackage{microtype}
\usepackage[unicode]{hyperref}
\usepackage[nameinlink,capitalize,noabbrev]{cleveref}
\usepackage{doi}
\usepackage{authblk}
\usepackage{tikz}
\usepackage{tikz-cd}
\input{macros}

\title{\textbf{Barr-Exactness and Congruence-Based Homological Algebra in Ternary $\Gamma$-Semimodules}}

\author[1,2]{Chandrasekhar Gokavarapu}
\author[3,4]{Dr D Madhusudhana Rao}

\affil[1]{Lecturer in Mathematics, Government College(A), Rajahmundry, A.P.,India}
\affil[2]{Research Scholar, Dept of  Mathematics, Acharya Nagarjuna University, A.P., India}

\affil[3]{Lecturer in Mathematics, Government College for Women(A), Guntur, A.P.,India}
\affil[4]{Research Supervisor, Dept of Mathematics, Acharya Nagarjuna University,A.P., India}

\begin{document}
\maketitle

\begin{abstract}
Let $T$ be a ternary $\Gamma$--semiring and let $\mathsf{SMod}_{T,\Gamma}$ be the category of
ternary $\Gamma$--semimodules with the intrinsic five--ary action
$T\times \Gamma \times M \times \Gamma \times T\to M$.This paper develops a congruence--first calculus in $\mathsf{SMod}_{T,\Gamma}$.Kernels and quotients are taken in the sense of congruences, not in the sense of cosets of subobjects, because coset quotients generally fail without subtraction. We prove that every morphism admits a canonical factorization through its kernel congruence, establish congruence versions of the three isomorphism theorems, and describe coequalizers as quotients by generated congruences. Exactness is formulated in the regular/Barr-exact sense: kernels are kernel pairs and exactness is expressed by equality of kernels with regular images. Finally, we record a projective generation criterion and a concrete failure of a classical diagram lemma in this Barr-exact but non-abelian setting.

\end{abstract}

% Keywords and MSC (edit wording to match the journal's style file)

\begin{keywords}
ternary $\Gamma$--semiring; ternary $\Gamma$--semimodule; congruence quotients; kernel congruence; kernel pairs; regular epimorphisms; Barr-exact categories; coequalizers; nonabelian homological algebra; projective objects
\end{keywords}

\begin{classification}
Primary 16Y60; Secondary 16Y80, 18E08, 18G50, 08A30
\end{classification}

\input{sections/01-introduction}

\input{sections/02-preliminaries}
\input{sections/03-ternary-gamma-semiring}
\input{sections/04-ternary-gamma-semimodules}
\input{sections/05-congruences-quotients}
\input{sections/06-isomorphism-theorems}
\input{sections/07-category-structure}
\input{sections/08-exactness-framework}
\input{sections/09-projectives-flatness}
\input{sections/10-derived-constructions}
\input{sections/11-examples-counterexamples}
\input{sections/12-spectrum-optional}
\input{sections/13-conclusion}

% Bibliography
\bibliographystyle{amsplain}
\bibliography{refs}

\end{document}

%% file: macros.tex
\theoremstyle{plain}
\newtheorem{theorem}{Theorem}[section]
\newtheorem{proposition}[theorem]{Proposition}
\newtheorem{lemma}[theorem]{Lemma}
\newtheorem{corollary}[theorem]{Corollary}

\theoremstyle{definition}
\newtheorem{definition}[theorem]{Definition}
\newtheorem{example}[theorem]{Example}
\newtheorem{remark}[theorem]{Remark}
\newtheorem{openproblem}[theorem]{Open Problem}
\newtheorem{hypothesis}[theorem]{Hypothesis}

\DeclareMathOperator{\Hom}{Hom}

\DeclareMathOperator{\Spec}{Spec}
\DeclareMathOperator{\fp}{fp}
\DeclareMathOperator{\Iso}{Iso}

\makeatletter
\@ifundefined{keywords}{%
  \newenvironment{keywords}{%
    \par\addvspace{6pt}%
    \begingroup\small\noindent\textbf{Key words and phrases.}\ }%
  {\par\endgroup\addvspace{6pt}}%
}{}

\@ifundefined{classification}{%
  \newenvironment{classification}{%
    \par\addvspace{0pt}%
    \begingroup\small\noindent\textbf{2020 Mathematics Subject Classification.}\ }%
  {\par\endgroup\addvspace{6pt}}%
}{}
\makeatother

%% file: sections/01-introduction.tex
\section{Introduction}

Work over semirings replaces subtraction by order, idempotency, or positivity constraints.
This replacement is structural, not cosmetic. It changes quotients, kernels, and exactness.
Recent module theory over semirings makes this explicit at the level of homological devices
and stability properties \cite{BorgerJun2025,AbuhlailNoegraha2021,Abuhlail2022}.
In parallel, tropical and ``positive'' geometries require congruences rather than ideals
as basic quotient data \cite{JunRay2020,Song2024,Ito2025,Sen2025,JooMincheva2025}.
For background on tropical combinatorics and tropical geometry motivating congruence methods,
see \cite{Joswig2021,AllamigeonGaubertSkomra2020,FriedenbergMincheva2022,Manjunath2024}.

\subsection*{Scope and the main choice of exactness framework}
The category $\mathsf{SMod}_{T,\Gamma}$ is not additive in general.
Subobjects do not control quotients. Quotients are governed by congruences.
For this reason we work congruence-first throughout.

Exactness is formulated using Route (A): the regular/Barr-exact framework.
Regular epimorphisms are coequalizers, hence congruence quotients, and kernels are kernel pairs.
Short exactness is expressed by a kernel followed by a regular epimorphism and by equality of
kernels with regular images \cite{Janelidze2024,MantovaniMessora2025,Gray2025}.

\subsection*{What is genuinely ternary/$\Gamma$--specific here}
Two features are structural.
First, the scalar action is five--ary, and congruence stability must be stated for the full
$(a,\alpha,\ \cdot\ ,\beta,b)$ datum. Unary substitutes lose information.
Second, associativity constraints are ternary: the scalar law is governed by the ternary product
and not by a derived binary multiplication. This affects how kernel congruences and factorization
systems interact with scalar extension and restriction \cite{JunSzczesnyTolliver2022,JunMinchevaRowen2022,AkianGaubertRowen2024}.

\subsection*{Outline}
Section~\ref{sec:prelim} fixes congruences, generated congruences, and subtractive closure.
Section~\ref{sec:tgs} defines ternary $\Gamma$--semirings.
Section~\ref{sec:semimod} introduces ternary $\Gamma$--semimodules and their morphisms.
Section~\ref{sec:cong} develops congruence quotients and coequalizers.
Section~\ref{sec:iso} proves congruence isomorphism theorems and isolates the precise points
where classical coset proofs fail.
Later sections treat categorical structure, regular/Barr-exact exactness, and projectives.

% ============================================================
% Insert near the end of the Introduction (new subsection)
% ============================================================

\subsection*{What is genuinely ternary/$\Gamma$--specific (not a generic variety argument)}
Several statements in this paper are formally compatible with the general theory of equational categories.
That resemblance is misleading.
The point of Track~B is not the existence of limits or the mere availability of congruence quotients.
The point is that the \emph{five--ary} scalar interaction forces phenomena that do not reduce to unary
semiring actions.
We isolate the non-transport content used later.

\begin{enumerate}[label=\textnormal{(\arabic*)}]
\item \textbf{Congruence compatibility depends on the full $(a,\alpha,m,\beta,b)$ datum.}
For a morphism $f:M\to N$, the kernel relation $x\sim_f y\Leftrightarrow f(x)=f(y)$ is a congruence
only because it is stable under every operator
\[
m\longmapsto a\,\alpha\, m\,\beta\, b,
\qquad (a,b\in T,\ \alpha,\beta\in\Gamma),
\]
not merely under a unary scalar action.
This dependence is used when proving that kernel pairs in $\mathsf{SMod}_{T,\Gamma}$ coincide with kernel congruences
(Lemma~\ref{lem:kpair-congruence}) and when computing coequalizers as generated congruence quotients
(Proposition~\ref{prop:coeq}).

\item \textbf{Regular images and factorization are not ``module images''.}
In additive module categories, the image is a kernel of a cokernel and is automatically normal.
Here the regular image is defined through the surjection--mono factorization (Definition~\ref{def:reg-image}),
and exactness is defined by equality of a kernel with a \emph{regular} image (Definition~\ref{def:reg-exact-at}).
The absence of subtraction forces this replacement and is used throughout Section~\ref{sec:exact}.

\item \textbf{Barr-exactness is realized by \emph{effective congruences}, not by cosets.}
Every congruence on a semimodule is effective because the quotient map $q:M\to M/{\equiv}$
has kernel pair equal to $\equiv$ (Proposition~\ref{prop:cong-effective}).
This is the categorical reason coequalizers are congruence quotients.
No coset argument can be used, and the explicit overlap of naive cosets in
Section~\ref{subsec:ex-coset-fails} shows why.

\item \textbf{Classical diagram lemmas fail even under Barr-exactness.}
Barr-exactness does not imply abelian exactness.
We exhibit a concrete failure of the Short Five Lemma inside $\mathsf{SMod}_{T,\Gamma}$
(Proposition~\ref{prop:short-five-fails}).
This failure is not a technicality.
It is the obstruction that prevents importing homological algebra ``as is''.

\item \textbf{The ternary ideal product drives spectral behavior.}
In the optional spectrum section we define
\[
I\cdot J:=\big\langle I\,\Gamma\,T\,\Gamma\,J\big\rangle,
\]
and prime ideals are defined by the implication $I\cdot J\subseteq\fp\Rightarrow I\subseteq\fp$ or $J\subseteq\fp$
(Definition~\ref{def:prime-ideal-tgs}).
This is the correct replacement for power-based radicals, since there is no canonical unary power $x^n$
in a ternary context.
The topology axioms use the product formula $V(I)\cup V(J)=V(I\cdot J)$ (Theorem~\ref{thm:zariski}).

\item \textbf{Projective generation is proved without splitting.}
In a module category one often proves projectivity by splitting short exact sequences.
Here we cannot.
Instead, free objects are constructed by term formation and a congruence quotient
(Theorem~\ref{thm:free-universal}), and projectivity is proved by lifting against regular epimorphisms
(Proposition~\ref{prop:free-projective}).
This is the non-additive method used to obtain enough projectives (Theorem~\ref{thm:enough-projectives}).
\end{enumerate}

\noindent
These six points are the places where a ring/module proof would either be meaningless (cosets, subtraction),
or would silently change the notion of quotient and exactness.
They are the non-transport commitments of this track.
\paragraph{Additional recent context.}
Several recent strands motivate the categorical and congruence-based focus taken here:
semimodule extension phenomena and stable variants \cite{AlhashemiAlhossaini2021,NaeemahAlsaadi2025},
congruence-simplicity and its matrix and semiring refinements \cite{Kala2024,KalaKepkaKorbelar2023,KepkaKorbelarLandsmann2022a,KepkaKorbelarLandsmann2022b},
exact completion and related categorical constructions in non-additive settings \cite{AravantinosSotiropoulos2021,MaiettiRosolini2021,Elgueta2022,FacchiniGranPompili2023},
and semiring geometry/tropical viewpoints where prime kernels (subtractive ideals) play a structural role \cite{AminiIriarte2022,GiansiracusaMereta2021,IezziSchleis2023,JarraLorscheid2024,GualdiKuhrsMayoGarciaXarles2026}.

%% file: sections/02-preliminaries.tex
\section{Preliminaries on congruences and subtractive closure}\label{sec:prelim}

\subsection{Congruences and generated congruences}
Let $(M,+,0)$ be a commutative monoid. A \emph{congruence} on $M$ is an equivalence relation
$\equiv$ such that $x\equiv y$ implies $x+z\equiv y+z$ for all $z\in M$.
If $R\subseteq M\times M$ is any relation, write $\langle R\rangle$ for the smallest congruence on $M$
containing $R$. We use the standard ``congruence generated by pairs'' construction.
This viewpoint is the one used in modern semiring and tropical quotient theory
\cite{JunRay2020,Ito2025,Song2024,Sen2025,LorscheidRay2023,Jarra2023}.

\begin{definition}[Kernel congruence]\label{def:kercong}
Let $f:M\to N$ be a morphism of commutative monoids. The \emph{kernel congruence} of $f$ is
\[
x\equiv_f y \quad \Longleftrightarrow \quad f(x)=f(y).
\]
\end{definition}

\begin{lemma}\label{lem:kercong}
For any monoid morphism $f:M\to N$, the relation $\equiv_f$ is a congruence on $M$.
\end{lemma}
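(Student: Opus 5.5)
The plan is to verify the two requirements in the definition of a congruence on a commutative monoid directly from \cref{def:kercong}. First, $\equiv_f$ must be an equivalence relation. Second, it must be compatible with addition.

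For the first requirement, note that $\equiv_f$ is the pullback along $f\times f$ of the equality relation on $N$. Concretely, $x\equiv_f y$ holds if and only if $(f(x),f(y))$ lies in the diagonal of $N\times N$. Reflexivity, symmetry and transitivity are therefore inherited from the corresponding properties of equality in $N$:
\begin{itemize}
\item $f(x)=f(x)$ gives reflexivity;
\item $f(x)=f(y)$ implies $f(y)=f(x)$, which gives symmetry;
\item $f(x)=f(y)$ and $f(y)=f(z)$ imply $f(x)=f(z)$, which gives transitivity.
\end{itemize}
No structure on $M$ or $N$ is used in this step.

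For the second requirement, suppose $x\equiv_f y$ and let $z\in M$. Since $f$ is a monoid morphism, it preserves addition, so
\[
f(x+z)=f(x)+f(z)=f(y)+f(z)=f(y+z),
\]
and hence $x+z\equiv_f y+z$. Commutativity of $M$ means that compatibility on one side suffices. If one prefers the two-variable form ($x\equiv_f y$ and $x'\equiv_f y'$ imply $x+x'\equiv_f y+y'$), it follows the same way, or by combining one-sided compatibility with transitivity.

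There is no real obstacle in this argument. The only point to watch is that it uses only additivity of $f$ and never subtraction. This matters because later, for semimodule morphisms, the same argument will be repeated with the five-ary operator $m\mapsto a\,\alpha\,m\,\beta\,b$ in place of $m\mapsto m+z$, with equivariance of $f$ playing the role that additivity plays here.
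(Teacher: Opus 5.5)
Your proposal is correct and matches the paper's proof: reflexivity, symmetry and transitivity come from equality in $N$, and compatibility with addition comes from $f(x+z)=f(x)+f(z)=f(y)+f(z)=f(y+z)$. Your closing remark also fits the paper, which later gets stability of $\equiv_f$ under the operators $m\mapsto a\alpha m\beta b$ from $f$ preserving the action.
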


\begin{proof}
Reflexivity, symmetry, and transitivity are immediate from equality in $N$.
If $x\equiv_f y$, then $f(x+z)=f(x)+f(z)=f(y)+f(z)=f(y+z)$, so $x+z\equiv_f y+z$.
\end{proof}

\subsection{Subtractive closure}
We record subtractive closure as a comparison invariant and as a source of counterexamples. Exactness will be defined via regular images.
We use it to state image and exactness conditions.

\begin{definition}[Subtractive closure]\label{def:subtractive-closure}
Let $U\subseteq M$ be a submonoid. The \emph{subtractive closure} of $U$ is
\[
\overline{U}^{\,\mathrm{sub}}=\{\,m\in M\mid \exists\,u,u'\in U\ \text{with}\ m+u=u'\,\}.
\]
We say that $U$ is \emph{subtractive} if $\overline{U}^{\,\mathrm{sub}}=U$.
\end{definition}

This notion is standard in semimodule exactness and is the correct condition for quotients
by ``identifying a subobject with $0$'' to have good universal properties
\cite{AbuhlailNoegraha2021,Abuhlail2022,Sengupta2026}.

\subsection{A categorical note}
We will use coequalizers in varieties of algebras (generated congruences).
For background on exactness vs ideal exactness and its interaction with congruence data,
see \cite{Janelidze2024,MantovaniMessora2025,Gray2025,AbbadiniReggio2023,Stepan2023,Juran2025}.
For semiring and hyperring module categories in a proto-exact sense, see \cite{JunSzczesnyTolliver2022}.

%% file: sections/03-ternary-gamma-semiring.tex
\section{Ternary $\Gamma$--semirings}\label{sec:tgs}

Fix a commutative monoid $(\Gamma,+,0_\Gamma)$.

\begin{definition}[Ternary $\Gamma$--semiring]\label{def:tgs}
A \emph{ternary $\Gamma$--semiring} is a commutative monoid $(T,+,0)$ equipped with a map
\[
T\times \Gamma \times T\times \Gamma \times T \longrightarrow T,\qquad
(a,\alpha,b,\beta,c)\longmapsto a\alpha b\beta c,
\]
such that for all $a,a',b,b',c,c'\in T$ and $\alpha,\alpha',\beta,\beta'\in \Gamma$ the following hold:
\begin{enumerate}[label=\textnormal{(T\arabic*)}]
\item \emph{Additivity in each $T$--slot:}
\[
(a+a')\alpha b\beta c = a\alpha b\beta c + a'\alpha b\beta c,
\quad
a\alpha (b+b')\beta c = a\alpha b\beta c + a\alpha b'\beta c,
\]
\[
a\alpha b\beta (c+c') = a\alpha b\beta c + a\alpha b\beta c'.
\]
\item \emph{Additivity in each $\Gamma$--slot:}
\[
a(\alpha+\alpha')b\beta c = a\alpha b\beta c + a\alpha' b\beta c,
\quad
a\alpha b(\beta+\beta')c = a\alpha b\beta c + a\alpha b\beta' c.
\]
\item \emph{Ternary associativity:} for all $d,e\in T$ and $\gamma,\delta\in\Gamma$,
\[
(a\alpha b\beta c)\gamma d\delta e = a\alpha (b\beta c\gamma d)\delta e
= a\alpha b\beta (c\gamma d\delta e).
\]
\item \emph{Zero stability:} if any of $a,b,c$ is $0$ or any of $\alpha,\beta$ is $0_\Gamma$,
then $a\alpha b\beta c=0$.
\end{enumerate}
\end{definition}

\begin{remark}\label{rem:tgs-literature}
The ternary and $\Gamma$--parametric features are independent. The semiring systems
formalism emphasizes this separation by treating $\Gamma$--data as structural parameters
in addition to the underlying ``positive'' algebra \cite{AkianGaubertRowen2024,JunMinchevaRowen2022}.
Subtractive ideal structure for commutative semirings is a guiding analogy on the quotient side
\cite{Sengupta2026}.
\end{remark}

\begin{definition}[Ideals]\label{def:ideal}
A subset $I\subseteq T$ is a \emph{(two-sided) ideal} if it is a submonoid and
for all $a,b\in T$, $x\in I$, and $\alpha,\beta\in\Gamma$ one has
$a\alpha x\beta b\in I$ and $a\alpha b\beta x\in I$ and $x\alpha a\beta b\in I$
whenever these expressions are defined by the chosen convention.
\end{definition}

\begin{remark}
In congruence-based approaches, ideals are not the primary quotient data.
They are useful for internal constructions (generators, annihilators), while quotients are governed by congruences
\cite{JunRay2020,LorscheidRay2023,Jarra2023,Ito2025}.
\end{remark}

%% file: sections/04-ternary-gamma-semimodules.tex
\section{Ternary $\Gamma$--semimodules}\label{sec:semimod}

\begin{definition}[Ternary $\Gamma$--semimodule]\label{def:tgs-semimodule}
Let $T$ be a ternary $\Gamma$--semiring. A \emph{ternary $\Gamma$--semimodule} over $T$ is a commutative monoid
$(M,+,0)$ equipped with a map
\[
T\times \Gamma \times M\times \Gamma \times T \longrightarrow M,\qquad
(a,\alpha,m,\beta,b)\longmapsto a\alpha m\beta b,
\]
such that for all $a,a',b,b'\in T$, $m,m'\in M$, and $\alpha,\alpha',\beta,\beta'\in\Gamma$:
\begin{enumerate}[label=\textnormal{(M\arabic*)}]
\item \emph{Additivity in $a,m,b$:}
\[
(a+a')\alpha m\beta b = a\alpha m\beta b + a'\alpha m\beta b,\quad
a\alpha (m+m')\beta b = a\alpha m\beta b + a\alpha m'\beta b,
\]
\[
a\alpha m\beta (b+b') = a\alpha m\beta b + a\alpha m\beta b'.
\]
\item \emph{Additivity in $\alpha,\beta$:}
\[
a(\alpha+\alpha')m\beta b = a\alpha m\beta b + a\alpha' m\beta b,\quad
a\alpha m(\beta+\beta')b = a\alpha m\beta b + a\alpha m\beta' b.
\]
\item \emph{Zero stability:} if $a=0$ or $m=0$ or $b=0$ or $\alpha=0_\Gamma$ or $\beta=0_\Gamma$, then $a\alpha m\beta b=0$.
\item \emph{Compatibility with ternary multiplication:}
\[
(a\alpha b\beta c)\gamma m\delta d = a\alpha (b\beta m\gamma c)\delta d
\qquad
(\forall a,b,c,d\in T,\ \forall m\in M,\ \forall \alpha,\beta,\gamma,\delta\in\Gamma).
\]
\end{enumerate}
\end{definition}

\begin{definition}[Morphisms]\label{def:morphism}
A morphism $f:M\to N$ of ternary $\Gamma$--semimodules is a monoid morphism such that
$f(a\alpha m\beta b)=a\alpha f(m)\beta b$ for all $a,b\in T$, $\alpha,\beta\in\Gamma$, and $m\in M$.
Write $\mathsf{SMod}_{T,\Gamma}$ for the resulting category.
\end{definition}

\begin{definition}[Subsemimodules]\label{def:subsemimodule}
A subset $L\subseteq M$ is a \emph{subsemimodule} if it is a submonoid and
$a\alpha \ell\beta b\in L$ for all $\ell\in L$ and all $a,b\in T$, $\alpha,\beta\in\Gamma$.
\end{definition}

\begin{remark}
Exactness and projectivity for semimodules depend on subtractive closure conditions.
This dependence is not optional; it is forced by the failure of coset quotients.
See \cite{AbuhlailNoegraha2021,Abuhlail2022,BorgerJun2025}.
\end{remark}

%% file: sections/05-congruences-quotients.tex
\section{Congruences and quotients}\label{sec:cong}

\subsection{Semimodule congruences}
\begin{definition}[Congruence on a semimodule]\label{def:semimod-cong}
A \emph{congruence} on a ternary $\Gamma$--semimodule $M$ is an equivalence relation $\equiv$
such that for all $x\equiv y$ and all $z\in M$ one has $x+z\equiv y+z$, and for all $a,b\in T$,
$\alpha,\beta\in\Gamma$ one has
\[
x\equiv y \ \Longrightarrow\ a\alpha x\beta b \equiv a\alpha y\beta b.
\]
\end{definition}

If $\equiv$ is a semimodule congruence, the quotient set $M/{\equiv}$ is a ternary $\Gamma$--semimodule
with operations induced from $M$. This is standard in algebraic categories and is the natural quotient notion
in semiring geometry \cite{LorscheidRay2023,Jarra2023,JunRay2020}.

\subsection{Coequalizers as congruence quotients}
Let $f,g:M\to N$ be morphisms in $\mathsf{SMod}_{T,\Gamma}$.
Let $\equiv_{f,g}$ be the congruence on $N$ generated by the set of pairs $\{(f(m),g(m))\mid m\in M\}$.

\begin{proposition}\label{prop:coeq}
The canonical projection $q:N\to N/{\equiv_{f,g}}$ is a coequalizer of $f$ and $g$ in $\mathsf{SMod}_{T,\Gamma}$.
\end{proposition}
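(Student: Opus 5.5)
The proof is the standard universal-property argument for quotients by generated congruences, carried out in three steps.

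\emph{Step 1: $q$ is a morphism with $q\circ f=q\circ g$.} Since $\equiv_{f,g}$ is a semimodule congruence in the sense of Definition~\ref{def:semimod-cong}, the induced operations on $N/{\equiv_{f,g}}$ are well defined. Addition is well defined by stability under $+z$ together with transitivity: from $x\equiv x'$ and $y\equiv y'$ we get $x+y\equiv x'+y\equiv x'+y'$. The five--ary action is well defined by stability under every operator $m\mapsto a\alpha m\beta b$. Hence $q(x)=[x]$ is a morphism by construction. Because every generating pair $(f(m),g(m))$ lies in $\equiv_{f,g}$, we get $q f=q g$.

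\emph{Step 2: factorization.} Let $h:N\to P$ be a morphism with $hf=hg$. I would consider the kernel congruence $\equiv_h$ on $N$. It is an equivalence relation compatible with $+$ by Lemma~\ref{lem:kercong}. It is also stable under the five--ary action, since
\[
h(a\alpha x\beta b)=a\alpha h(x)\beta b=a\alpha h(y)\beta b=h(a\alpha y\beta b)
\]
whenever $h(x)=h(y)$. So $\equiv_h$ is a semimodule congruence. It contains every pair $(f(m),g(m))$, because $hf=hg$. By minimality of the generated congruence, $\equiv_{f,g}\subseteq\equiv_h$. Hence $\bar h([x]):=h(x)$ is well defined. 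It is a morphism because the operations on the quotient are computed on representatives, and $\bar h q=h$ holds by definition.

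\emph{Step 3: uniqueness.} $q$ is surjective, hence epic, so $\bar h$ is unique.

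The only step needing care is the meaning of ``generated congruence''. Section~\ref{sec:prelim} defines $\langle R\rangle$ for monoid congruences, whereas here we need the smallest \emph{semimodule} congruence. I would define it as the intersection of all semimodule congruences containing $R$. This family is nonempty, since $N\times N$ belongs to it, and an intersection of semimodule congruences is again one, since each closure condition is preserved under intersection. The minimality used in Step 2 then holds by definition. If instead one wants the monoid-generated congruence, one first closes $R$ under all operators $a\alpha(\cdot)\beta b$ and their composites, and then checks that the resulting monoid congruence is action-stable. I would avoid that route and use the intersection definition.
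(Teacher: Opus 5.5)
Your proof is correct and follows the same route as the paper: $qf=qg$ by construction, and any $h$ with $hf=hg$ is constant on $\equiv_{f,g}$-classes, so it factors uniquely through $q$. The paper's proof is a few lines long and leaves several steps implicit, which you spell out: that $\equiv_h$ is a semimodule congruence (not merely a monoid congruence) containing the generating pairs, so minimality gives $\equiv_{f,g}\subseteq\equiv_h$; that the quotient operations are well defined; and that uniqueness follows from surjectivity of $q$.
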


\begin{proof}
By construction $qf=qg$. Let $h:N\to P$ with $hf=hg$.
Then $h$ identifies each $f(m)$ with $g(m)$, hence is constant on $\equiv_{f,g}$--classes.
Therefore there exists a unique $\bar h:N/{\equiv_{f,g}}\to P$ with $\bar h\circ q=h$.
\end{proof}

\subsection{Kernel congruence and the universal factorization}
Let $f:M\to N$ in $\mathsf{SMod}_{T,\Gamma}$ and consider the kernel congruence $\equiv_f$ from
Definition~\ref{def:kercong}.

\begin{proposition}[Universal factorization]\label{prop:univ-factor}
There is a unique morphism $\bar f:M/{\equiv_f}\to N$ with $\bar f([m])=f(m)$ and $f=\bar f\circ \pi$,
where $\pi:M\to M/{\equiv_f}$ is the quotient map. The map $\bar f$ is injective onto $\mathrm{Im}(f)$.
\end{proposition}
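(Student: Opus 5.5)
The plan is to reduce everything to the fact that $\equiv_f$ is a semimodule congruence in the sense of Definition~\ref{def:semimod-cong}. Then $M/{\equiv_f}$ carries the induced ternary $\Gamma$--semimodule structure, and $\pi$ is a morphism. First, Lemma~\ref{lem:kercong} already gives that $\equiv_f$ is an equivalence relation compatible with $+$. For scalar stability, I take $x\equiv_f y$ and arbitrary $a,b\in T$, $\alpha,\beta\in\Gamma$. Definition~\ref{def:morphism} gives
\[
f(a\alpha x\beta b)=a\alpha f(x)\beta b=a\alpha f(y)\beta b=f(a\alpha y\beta b),
\]
so $a\alpha x\beta b\equiv_f a\alpha y\beta b$. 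This is exactly the full five--ary stability required, checked for the whole datum $(a,\alpha,\cdot,\beta,b)$ at once. Consequently the quotient operations $[x]+[y]=[x+y]$ and $a\alpha[x]\beta b=[a\alpha x\beta b]$ are well defined. The axioms (M1)--(M4) descend from $M$ because they are equations evaluated on representatives, and $\pi$ is a surjective morphism by construction.

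Next I define $\bar f([m]):=f(m)$. This is well defined precisely because $[m]=[m']$ means $f(m)=f(m')$. It is additive and scalar-compatible, since for instance $\bar f(a\alpha[m]\beta b)=f(a\alpha m\beta b)=a\alpha f(m)\beta b$, and similarly for sums and $0$. The identity $f=\bar f\circ\pi$ holds by definition. Uniqueness follows from surjectivity of $\pi$: any $g$ with $g\circ\pi=f$ must satisfy $g([m])=f(m)$. So $\pi$ is an epimorphism, even at the level of sets.

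Finally, injectivity: if $\bar f([m])=\bar f([m'])$ then $f(m)=f(m')$, so $m\equiv_f m'$ and $[m]=[m']$. The image of $\bar f$ equals $f(M)=\mathrm{Im}(f)$ because $\pi$ is onto. $\mathrm{Im}(f)$ is a subsemimodule of $N$, being closed under $+$, containing $0=f(0)$, and closed under $a\alpha(\cdot)\beta b$ by the morphism identity. Hence $\bar f$ corestricts to a bijective morphism $M/{\equiv_f}\to\mathrm{Im}(f)$. Its inverse is automatically a morphism, since the operations are transported along a bijection, so this corestriction is an isomorphism.

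I expect no serious obstacle; the argument is routine. The only step requiring care is the descent of structure to $M/{\equiv_f}$. One has to note explicitly that well-definedness of the induced five--ary action uses stability in the middle slot only, with $a,\alpha,\beta,b$ held fixed. It does not need simultaneous variation of the outer slots, because those range over $T$ and $\Gamma$, not over $M$. Without subtraction, no coset description is available, so every well-definedness check must be phrased through $\equiv_f$ directly.
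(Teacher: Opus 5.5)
Your proof is correct and follows the same route as the paper: well-definedness of $\bar f$ comes directly from the definition of $\equiv_f$, the morphism property is inherited from $f$, injectivity is obtained by unwinding $\equiv_f$, and the image of $\bar f$ is $\mathrm{Im}(f)$. You also make explicit two points the paper leaves implicit: that $\equiv_f$ is stable under the full operator $m\mapsto a\alpha m\beta b$, so that $M/{\equiv_f}$ is actually a semimodule, and that uniqueness follows from surjectivity of $\pi$.
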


\begin{proof}
Well-definedness is exactly the definition of $\equiv_f$.
The map is a morphism because $f$ preserves addition and the ternary action.
If $\bar f([m])=\bar f([m'])$ then $f(m)=f(m')$ so $m\equiv_f m'$ and $[m]=[m']$.
Surjectivity onto $\mathrm{Im}(f)$ is immediate.
\end{proof}

\begin{remark}\label{rem:no-cosets}
The quotient $M/{\equiv_f}$ replaces the classical quotient $M/\ker(f)$.
Without subtraction, $\ker(f)$ need not control identification, and the naive coset relation
is typically not transitive unless $\ker(f)$ is subtractive.
This is the basic reason congruences are unavoidable \cite{AbuhlailNoegraha2021,Abuhlail2022,Sengupta2026}.
\end{remark}

%% file: sections/06-isomorphism-theorems.tex
\section{Isomorphism theorems in congruence form}\label{sec:iso}

\subsection{First isomorphism theorem}
\begin{theorem}[First isomorphism theorem]\label{thm:first-iso}
Let $f:M\to N$ in $\mathsf{SMod}_{T,\Gamma}$ and let $\equiv_f$ be its kernel congruence.
Then $M/{\equiv_f}\cong \mathrm{Im}(f)$ as ternary $\Gamma$--semimodules.
\end{theorem}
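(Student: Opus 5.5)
The plan is to obtain the isomorphism from the universal factorization of Proposition~\ref{prop:univ-factor}, after checking that every object in that statement lives in $\mathsf{SMod}_{T,\Gamma}$.

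First, $\equiv_f$ must be a congruence in the sense of Definition~\ref{def:semimod-cong}, not merely a monoid congruence. Lemma~\ref{lem:kercong} already gives that it is an equivalence relation compatible with addition. For stability under the five-ary action, suppose $f(x)=f(y)$. Then by Definition~\ref{def:morphism},
\[
f(a\alpha x\beta b)=a\alpha f(x)\beta b=a\alpha f(y)\beta b=f(a\alpha y\beta b)
\]
for all $a,b\in T$ and $\alpha,\beta\in\Gamma$. Hence $M/{\equiv_f}$ is a ternary $\Gamma$--semimodule under the induced operations $[x]+[y]=[x+y]$ and $a\alpha[x]\beta b=[a\alpha x\beta b]$, and the projection $\pi$ is a morphism.

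Second, $\mathrm{Im}(f)=f(M)$ must be a subsemimodule of $N$ in the sense of Definition~\ref{def:subsemimodule}. It is a submonoid because $f(0)=0$ and $f(x)+f(y)=f(x+y)$. It is closed under the action because $a\alpha f(m)\beta b=f(a\alpha m\beta b)$. So $\mathrm{Im}(f)$ is an object of $\mathsf{SMod}_{T,\Gamma}$, and a bijective map between two semimodules is an isomorphism once it preserves addition and the action.

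Third, invoke Proposition~\ref{prop:univ-factor}. It gives $\bar f:M/{\equiv_f}\to N$ with $\bar f([m])=f(m)$, which is injective with image exactly $\mathrm{Im}(f)$. Corestricting gives a bijection $\bar f:M/{\equiv_f}\to\mathrm{Im}(f)$. It is additive and equivariant because
\[
\bar f([x]+[y])=f(x+y)=f(x)+f(y),\qquad \bar f(a\alpha[x]\beta b)=f(a\alpha x\beta b)=a\alpha f(x)\beta b.
\]
Its set-theoretic inverse $f(m)\mapsto[m]$ is then automatically a morphism. Indeed, apply $\bar f^{-1}$ to these identities and use bijectivity; no subtraction is needed for this. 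Hence $\bar f$ is an isomorphism in $\mathsf{SMod}_{T,\Gamma}$.

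The only point needing real care is the first step. The argument must use the full five-ary operator $m\mapsto a\alpha m\beta b$ and the induced action on classes must be well defined. Well-definedness in each slot uses that $\equiv_f$ is stable under the action in the $m$-slot, while $a,\alpha,\beta,b$ are fixed. The axioms (M1)--(M4) for the quotient are equations, so they descend directly from $M$ through $\pi$. Everything else is routine bookkeeping.
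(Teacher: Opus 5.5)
Your proposal is correct and takes essentially the same route as the paper. Both arguments define $[m]\mapsto f(m)$ via Proposition~\ref{prop:univ-factor}, note that it is a bijective morphism onto $\mathrm{Im}(f)$, and conclude that its set-theoretic inverse (choosing preimages) is also a morphism. You also make explicit two facts the paper's proof leaves implicit: that $\equiv_f$ is stable under the full five-ary action, so $M/{\equiv_f}$ is a semimodule, and that $\mathrm{Im}(f)$ is a subsemimodule (Lemma~\ref{lem:image-subsemimodule}).
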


\begin{proof}
Define $\Phi:M/{\equiv_f}\to \mathrm{Im}(f)$ by $\Phi([m])=f(m)$.
By Proposition~\ref{prop:univ-factor}, $\Phi$ is well-defined and bijective.
It is a morphism because $f$ is. Its inverse is induced by choosing preimages, and is well-defined
because the fibers of $f$ are exactly the $\equiv_f$--classes.
\end{proof}

\subsection{Third isomorphism theorem}
Let $\rho\subseteq\sigma$ be congruences on $M$.
Define a congruence $\sigma/\rho$ on $M/\rho$ by $[x]_\rho \equiv_{\sigma/\rho} [y]_\rho$ iff $x\equiv_\sigma y$.

\begin{theorem}[Third isomorphism theorem]\label{thm:third-iso}
If $\rho\subseteq\sigma$ are congruences on $M$, then
\[
(M/\rho)/(\sigma/\rho)\ \cong\ M/\sigma
\]
naturally in $(M,\rho,\sigma)$.
\end{theorem}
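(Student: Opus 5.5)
The plan is to reduce the statement to the first isomorphism theorem (Theorem~\ref{thm:first-iso}) applied to a suitable surjection. First I check that $\sigma/\rho$ is well defined and is a semimodule congruence on $M/\rho$. If $[x]_\rho=[x']_\rho$ and $[y]_\rho=[y']_\rho$, then $x\rho x'$ and $y\rho y'$. Since $\rho\subseteq\sigma$, we get $x\sigma x'$ and $y\sigma y'$, and transitivity of $\sigma$ gives $x\sigma y \iff x'\sigma y'$. Being an equivalence relation is inherited from $\sigma$. Compatibility with $+$ and with every operator $[m]_\rho\mapsto a\alpha[m]_\rho\beta b$ also comes from $\sigma$, because the operations on $M/\rho$ are computed on representatives: $[x]_\rho+[z]_\rho=[x+z]_\rho$ and $a\alpha[x]_\rho\beta b=[a\alpha x\beta b]_\rho$. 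This is the step where the full five-ary datum of Definition~\ref{def:semimod-cong} enters.

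Next, let $q_\sigma:M\to M/\sigma$ be the quotient map. Since $\rho\subseteq\sigma=\ \equiv_{q_\sigma}$, the map $q_\sigma$ is constant on $\rho$-classes. So, by the same well-definedness argument as in Proposition~\ref{prop:univ-factor}, there is a unique morphism $\psi:M/\rho\to M/\sigma$ with $\psi([x]_\rho)=[x]_\sigma$. It is a morphism because $q_\sigma$ is, and it is surjective because $q_\sigma$ is. Its kernel congruence is
\[
\psi([x]_\rho)=\psi([y]_\rho)\iff x\sigma y\iff [x]_\rho\equiv_{\sigma/\rho}[y]_\rho ,
\]
so $\equiv_\psi=\sigma/\rho$. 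Theorem~\ref{thm:first-iso} then gives $(M/\rho)/(\sigma/\rho)\cong \mathrm{Im}(\psi)=M/\sigma$, via $[[x]_\rho]\mapsto[x]_\sigma$.

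For naturality, take a morphism $f:M\to M'$ with $(f\times f)(\rho)\subseteq\rho'$ and $(f\times f)(\sigma)\subseteq\sigma'$. Then $f$ induces maps on every quotient in sight, again by the universal property of Proposition~\ref{prop:univ-factor} or Proposition~\ref{prop:coeq}. All of these maps are given by $[x]\mapsto[f(x)]$ on representatives, as is the isomorphism $[[x]_\rho]\mapsto[x]_\sigma$. The naturality square therefore commutes, because both paths send $[[x]_\rho]$ to $[f(x)]_{\sigma'}$.

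The main obstacle is only bookkeeping. One has to state the naturality precisely, since the category of triples $(M,\rho,\sigma)$ is not defined earlier in the paper and I will define it as above. One also has to make sure $\sigma/\rho$ is independent of representatives. Everything else reduces to the first isomorphism theorem.
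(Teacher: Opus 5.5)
Your proof is correct. It produces the same isomorphism as the paper, $[[x]_\rho]\mapsto[x]_\sigma$; the difference lies in how you obtain bijectivity.

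The paper defines this map directly on $(M/\rho)/(\sigma/\rho)$. It then builds the inverse from the universal property of $M/\sigma$: the composite $M\to M/\rho\to(M/\rho)/(\sigma/\rho)$ is constant on $\sigma$-classes, so it factors through $M/\sigma$. You instead factor $q_\sigma$ through $M/\rho$ to get $\psi$, compute ${\equiv_\psi}=\sigma/\rho$, and let Theorem~\ref{thm:first-iso} supply the rest.

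The mathematical content is the same in both versions. The inclusion $\sigma/\rho\subseteq{\equiv_\psi}$ is exactly the well-definedness of the paper's forward map. The inclusion ${\equiv_\psi}\subseteq\sigma/\rho$ is exactly the well-definedness of its inverse $[x]_\sigma\mapsto[[x]_\rho]$.

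Your packaging reuses an already proved theorem, and it makes explicit two points the paper leaves implicit. First, $\sigma/\rho$ is independent of representatives and is a semimodule congruence, so the double quotient is an object of $\mathsf{SMod}_{T,\Gamma}$ at all. Second, you give a precise meaning to ``naturally in $(M,\rho,\sigma)$'' through your category of triples. The paper's version writes down both directions on representatives, so it is slightly more self-contained and symmetric, and it makes the naturality check equally immediate.
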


\begin{proof}
Send the class of $[x]_\rho$ in $(M/\rho)/(\sigma/\rho)$ to $[x]_\sigma$ in $M/\sigma$.
This is well-defined since $\sigma/\rho$ records precisely the identifications already present in $\sigma$.
The inverse is induced by the universal property of quotients.
\end{proof}

\subsection{Second isomorphism theorem with a subtractivity hypothesis}
To state a replacement for $(A+B)/B\cong A/(A\cap B)$ one must define a quotient by a subsemimodule.
The correct quotient is the cokernel of the inclusion, i.e.\ the coequalizer of $B\hookrightarrow M$ and $0:B\to M$.
This quotient behaves as expected only when $B$ is subtractive.

\begin{definition}[Rees-type congruence associated to a subtractive subsemimodule]\label{def:rees}
Let $B\subseteq M$ be a subsemimodule. Let $\rho_B$ be the congruence on $M$ generated by all pairs $(b,0)$ with $b\in B$.
Write $M/B$ for $M/\rho_B$.
\end{definition}

\begin{theorem}[Second isomorphism theorem]\label{thm:second-iso}
Let $A,B\subseteq M$ be subsemimodules and assume that $B$ is subtractive.
Let $A+B$ denote the subsemimodule generated by $A\cup B$.
Then the canonical map $A\to (A+B)/B$ induces an isomorphism
\[
A/(A\cap B)\ \cong\ (A+B)/B.
\]
\end{theorem}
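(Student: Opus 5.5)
The plan is to make both quotients explicit through a Bourne-type description of the generated congruence $\rho_B$ of Definition~\ref{def:rees}. Then I would apply the First Isomorphism Theorem~\ref{thm:first-iso} to the canonical map $\varphi:A\to (A+B)/B$, $a\mapsto[a]_{\rho_B}$. So the proof reduces to two facts: $\varphi$ is surjective, and its kernel congruence $\equiv_\varphi$ equals $\rho_{A\cap B}$ computed inside $A$.

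\textbf{Step 1: explicit form of $\rho_B$.} For any subsemimodule $B$ of a semimodule $X$, define
\[
x\sim_B y \iff \exists\, b,b'\in B:\ x+b=y+b'.
\]
I would check directly that $\sim_B$ is a congruence in the sense of Definition~\ref{def:semimod-cong}. Reflexivity and symmetry are clear. For transitivity, from $x+b_1=y+b_2$ and $y+b_3=z+b_4$ we get $x+(b_1+b_3)=z+(b_2+b_4)$. Compatibility with addition is immediate. Compatibility with $a\alpha(\cdot)\beta b$ follows from (M1) together with closure of $B$ under the five-ary action. Clearly $b\sim_B 0$, and any congruence containing all pairs $(b,0)$ contains $\sim_B$, since $x=x+0\equiv x+b=y+b'\equiv y$. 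Hence $\rho_B={\sim_B}$. Subtractivity of $B$ gives the extra fact $[x]=[0]\iff x\in B$. Applied to $X=A+B$ and to $X=A$ with the subsemimodule $A\cap B$, this describes both sides. Note that $A\cap B$ is subtractive in $A$, because $B$ is subtractive in $M$.

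\textbf{Step 2: surjectivity.} I would show $A+B=\{a+b: a\in A,\ b\in B\}$, since this set is a submonoid closed under the action by (M1). Then $[a+b]=[a]$ because $(b,0)\in\rho_B$, so $\varphi$ is onto. It is a morphism because the quotient map is.

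\textbf{Step 3 (main obstacle): kernel congruence.} By Step 1, $\varphi(a)=\varphi(a')$ iff $a+b=a'+b'$ for some $b,b'\in B$. I need to compare this with $a+c=a'+c'$ for some $c,c'\in A\cap B$. The implication from the $A\cap B$ relation to the $B$ relation is trivial. The converse is where the missing subtraction bites: the witnesses $b,b'$ need not lie in $A$.

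My first attempt would use subtractivity. Suppose $b$ can be written as $a_0+b_0$ with $a_0\in A\cap B$, for instance when $b\in A$ or $b'\in A$. Then the equation $a+b=a'+b'$ forces the other witness into $B\cap(A+\cdot)$ via the defining condition $m+u=u'$ of Definition~\ref{def:subtractive-closure}. I would try to transport $b,b'$ into $A$ by adding suitable elements on both sides.

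If this transport fails in general, the honest conclusion is that the argument needs a compatibility condition, for example that $A\cap B$ is \emph{$\rho_B$-saturated in $A$}. That condition means $\rho_B|_A=\rho_{A\cap B}$, and under it the theorem follows at once from Theorem~\ref{thm:first-iso}. So I would check small examples in $\mathbb N^2$, with coordinatewise actions, to decide whether this restriction identity holds automatically under subtractivity of $B$. If it does not, I would add it as a hypothesis.
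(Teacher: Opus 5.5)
Your Steps 1 and 2 are correct. $\rho_B$ is the relation $x+b=y+b'$ with $b,b'\in B$, subtractivity gives $[x]=[0]\iff x\in B$, and $\varphi:A\to(A+B)/B$ is a surjective morphism with kernel congruence $\rho_B|_A$. The gap is the converse implication in Step 3, and it cannot be filled, because the statement itself is false.

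Take $M=\mathbb{B}^2$ with coordinatewise join and the zero action (the device of Proposition~\ref{prop:short-five-fails}), so that subsemimodules are just submonoids. Let $B=\{(0,0),(1,0)\}$ and $A=\{(0,0),(0,1),(1,1)\}$.
\begin{itemize}
\item $B$ is subtractive: if $m\vee u=u'$ with $u,u'\in B$, then $m\le u'$, so $m\in B$.
\item $A\cap B=\{0\}$ and $A+B=M$.
\item Since $(0,1)+(1,0)=(1,1)+(0,0)$, the map $\varphi$ identifies $(0,1)$ with $(1,1)$.
\end{itemize}
So $(A+B)/B$ has two elements (the same computation as in Proposition~\ref{prop:coeq-explicit}), whereas $A/(A\cap B)\cong A$ has three. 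There is no isomorphism at all, canonical or otherwise.

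Your suggested test in $\mathbb{N}^2$ fails too. Take $B$ the diagonal and $A$ generated by $(1,0)$ and $(2,1)$. Then $A\cap B=0$ and $A\cong\mathbb{N}^2$, but $(A+B)/B\cong\mathbb{N}$ via $(p,q)\mapsto p-q$. In both examples the witness $b$ lies outside $A$, and there is nothing in $A\cap B$ to trade it for. No ``transport'' argument can work.

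The paper's own proof skips exactly this point. It uses subtractivity to show that the zero fibre of $\varphi$ is $A\cap B$, and then concludes that ``no extra identifications occur''. That is the additive inference ``trivial kernel implies injective'', which is unavailable here. In the example above, $\rho_B|_A$ and $\rho_{A\cap B}$ have the same zero class $\{0\}$, yet they are different congruences.

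Your fallback is the correct repair. Assume $\rho_B|_A=\rho_{A\cap B}$: whenever $a+b=a'+b'$ with $a,a'\in A$ and $b,b'\in B$, there exist $c,c'\in A\cap B$ with $a+c=a'+c'$. Under this hypothesis your Steps 1 and 2 together with Theorem~\ref{thm:first-iso} prove the isomorphism, and subtractivity of $B$ is not even needed. Abandon the ``first attempt'' via subtractivity. Subtractivity controls only the class of $0$, and that class does not determine a congruence in this setting.
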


\begin{proof}
The map $A\to (A+B)/B$ sends $a$ to its class in $M/\rho_B$.
Its kernel congruence on $A$ is the congruence generated by $(A\cap B,0)$.
Subtractivity of $B$ ensures that if $a\in A$ maps to $0$ in $M/\rho_B$ then $a\in A\cap B$,
so no extra identifications occur. The universal properties of congruence quotients yield the isomorphism.
\end{proof}

\begin{remark}\label{rem:need-subtractive}
Without the subtractivity hypothesis, the statement can fail because the relation generated by $(B,0)$
may force identifications outside $B$.
This is the precise point at which the classical coset proof uses subtraction.
See \cite{AbuhlailNoegraha2021,Abuhlail2022,Sengupta2026} for semiring analogues of this phenomenon.
\end{remark}

%% file: sections/07-category-structure.tex
% ============================================================
% File: sections/07-category-structure.tex
% ============================================================

\section{Category-level structure of $\mathsf{SMod}_{T,\Gamma}$}\label{sec:cat}

Throughout this section $T$ is a fixed ternary $\Gamma$--semiring and
$\mathsf{SMod}_{T,\Gamma}$ denotes the category of ternary $\Gamma$--semimodules.
We write $0$ for the zero semimodule and $0_{M,N}$ for the zero morphism $M\to N$.

\subsection{A concrete algebraic viewpoint}
For each $(a,\alpha,\beta,b)\in T\times\Gamma\times\Gamma\times T$ define a unary operation symbol
$\lambda_{a,\alpha,\beta,b}$ and interpret it on a semimodule $M$ by
\[
\lambda_{a,\alpha,\beta,b}(m):=a\alpha m\beta b.
\]
A ternary $\Gamma$--semimodule is then a commutative monoid equipped with a family of
monoid endomorphisms $\lambda_{a,\alpha,\beta,b}$ satisfying the identities imposed by
Definition~\ref{def:tgs-semimodule}. It follows that all constructions below can be carried out
componentwise on the underlying sets, provided we verify closure under $+$, $0$, and all
$\lambda_{a,\alpha,\beta,b}$.

\subsection{Zero object and biproducts}\label{subsec:biproducts}
\begin{proposition}[Zero object]\label{prop:zero-object}
There exists a zero object $0$ in $\mathsf{SMod}_{T,\Gamma}$.
It is the one-element commutative monoid $\{0\}$ with the unique action.
\end{proposition}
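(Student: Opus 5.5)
The plan is to verify directly that $\{0\}$ is a ternary $\Gamma$--semimodule and that it is both initial and terminal in $\mathsf{SMod}_{T,\Gamma}$.

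First, equip the one-element set $\{0\}$ with $0+0=0$ and the only possible action $a\alpha 0\beta b:=0$. Axioms (M1)--(M4) of Definition~\ref{def:tgs-semimodule} are equalities between elements of $\{0\}$, so they hold trivially. In particular, zero stability (M3) is automatic. This is the only place where anything needs to be checked about the object itself.

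Second, show that $\{0\}$ is terminal. For any $M$ there is exactly one set map $M\to\{0\}$, namely the constant map. It preserves addition and $0$. It also commutes with the action, because both $f(a\alpha m\beta b)$ and $a\alpha f(m)\beta b$ equal $0$. So it is a morphism in the sense of Definition~\ref{def:morphism}, and it is unique.

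Third, show that $\{0\}$ is initial. A morphism $\{0\}\to M$ must be a monoid morphism, so it must send $0\mapsto 0_M$; this gives uniqueness. For existence, the map $0\mapsto 0_M$ preserves addition since $0_M+0_M=0_M$. Equivariance requires
\[
f(a\alpha 0\beta b)=f(0)=0_M=a\alpha 0_M\beta b,
\]
and the last equality is exactly (M3) applied in $M$.

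This third step is the only point where an axiom of the target is used, namely zero stability (M3). It is the only mild obstacle, and it is already built into the definition. Combining terminality and initiality, $\{0\}$ is a zero object. The composite $M\to 0\to N$ is then the zero morphism $0_{M,N}$, which sends every element to $0_N$.
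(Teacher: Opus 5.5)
Your proof is correct and follows the same route as the paper: you show that $\{0\}$ is both terminal and initial by checking that the unique maps $M\to\{0\}$ and $\{0\}\to M$ are morphisms. Your version is more explicit, notably in observing that the morphism $\{0\}\to M$ commutes with the action only because of zero stability (M3) in $M$. The paper's phrase ``all values are forced to be $0$'' leaves that point implicit.
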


\begin{proof}
There is exactly one morphism $M\to \{0\}$ and one morphism $\{0\}\to M$ for every $M$.
Both preserve $+$ and the action because all values are forced to be $0$.
\end{proof}

\begin{proposition}[Binary products]\label{prop:product}
For $M,N\in\mathsf{SMod}_{T,\Gamma}$ the cartesian product set $M\times N$ becomes a ternary
$\Gamma$--semimodule with componentwise operations:
\[
(m,n)+(m',n')=(m+m',\,n+n'),\qquad 0=(0,0),
\]
\[
a\alpha(m,n)\beta b=(a\alpha m\beta b,\ a\alpha n\beta b).
\]
With the projections $\pi_1,\pi_2$ it is a product of $M$ and $N$.
\end{proposition}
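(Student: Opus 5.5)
The argument has two parts: first check that the componentwise structure on $M\times N$ satisfies the axioms (M1)--(M4) of Definition~\ref{def:tgs-semimodule}, and then verify the universal property of the product using the projections. Following the concrete algebraic viewpoint of this section, a ternary $\Gamma$--semimodule is a commutative monoid with a family of operators $\lambda_{a,\alpha,\beta,b}$ satisfying equational identities. Every axiom is therefore an identity in which each variable of type $M$ occurs in a single slot, and componentwise operations preserve such identities.

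For the structure, note that $(M\times N,+,(0,0))$ is a commutative monoid, being the product of commutative monoids. For (M1) and (M2), expand both sides componentwise. For instance,
\[
(a+a')\alpha(m,n)\beta b=\bigl((a+a')\alpha m\beta b,\ (a+a')\alpha n\beta b\bigr),
\]
and applying (M1) in $M$ and in $N$ separately turns this into $a\alpha(m,n)\beta b+a'\alpha(m,n)\beta b$. The other slots, including additivity in $(m,n)$ and in $\alpha,\beta$, are handled the same way.

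Zero stability (M3) needs one observation. If $a=0$, $b=0$, $\alpha=0_\Gamma$ or $\beta=0_\Gamma$, both components vanish by (M3) in $M$ and $N$. If $(m,n)=(0,0)$, then $m=0$ and $n=0$, so again both components vanish.

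For (M4), compute both sides in each coordinate: the first coordinates agree by (M4) in $M$, and the second by (M4) in $N$.

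For the universal property, the projections $\pi_1,\pi_2$ are monoid maps, and they commute with the action because the action is componentwise. Given morphisms $f:P\to M$ and $g:P\to N$, define $\langle f,g\rangle(p)=(f(p),g(p))$. It preserves $+$ and $0$ because $f$ and $g$ do. It preserves the action because
\[
\langle f,g\rangle(a\alpha p\beta b)=(a\alpha f(p)\beta b,\ a\alpha g(p)\beta b)=a\alpha\langle f,g\rangle(p)\beta b.
\]
It satisfies $\pi_1\langle f,g\rangle=f$ and $\pi_2\langle f,g\rangle=g$. Uniqueness holds at the level of sets: any $h$ with $\pi_1h=f$ and $\pi_2h=g$ must send $p\mapsto(f(p),g(p))$.

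None of these steps is a real obstacle. The only point needing care is the zero-stability axiom: its hypothesis ``$m=0$'' becomes ``$(m,n)=(0,0)$'', so both coordinates are zero at once and the axiom transfers without any extra case analysis.
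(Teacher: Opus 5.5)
Your proof is correct and follows the paper's own route: you verify (M1)--(M4) on $M\times N$ componentwise, then take the pairing $\langle f,g\rangle(p)=(f(p),g(p))$ as the mediating morphism, with uniqueness forced by the projections. You also spell out details the paper leaves implicit: that the projections are morphisms, and that the hypothesis $(m,n)=(0,0)$ in zero stability makes both coordinates vanish.
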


\begin{proof}
The axioms of Definition~\ref{def:tgs-semimodule} hold componentwise.
Given $f:X\to M$ and $g:X\to N$, define $\langle f,g\rangle(x)=(f(x),g(x))$.
This is a morphism by componentwise verification and it is uniquely determined by
$\pi_1\langle f,g\rangle=f$ and $\pi_2\langle f,g\rangle=g$.
\end{proof}

\begin{proposition}[Binary coproducts]\label{prop:coproduct}
The same object $M\times N$ with injections
\[
\iota_M(m)=(m,0),\qquad \iota_N(n)=(0,n)
\]
is also a coproduct of $M$ and $N$.
Hence $\mathsf{SMod}_{T,\Gamma}$ has binary biproducts.
\end{proposition}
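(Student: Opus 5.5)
To prove that $M\times N$ with the injections $\iota_M,\iota_N$ is a coproduct, I will follow the standard biproduct argument for commutative monoids, now carried out with the five--ary action. The object $M\times N$ is already a ternary $\Gamma$--semimodule by Proposition~\ref{prop:product}, so no new structure has to be built. First I check that $\iota_M$ and $\iota_N$ are morphisms. Additivity holds because $(m+m',0)=(m,0)+(m',0)$. Compatibility with the action holds because
\[
a\alpha(m,0)\beta b=(a\alpha m\beta b,\ a\alpha 0\beta b)=(a\alpha m\beta b,0)
\]
by zero stability (M3); the same computation works for $\iota_N$.

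Next, let $f:M\to X$ and $g:N\to X$ be morphisms. I define $[f,g]:M\times N\to X$ by $[f,g](m,n)=f(m)+g(n)$.

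\begin{itemize}
\item $[f,g]$ preserves $0$ because $f(0)=g(0)=0$.
\item It preserves addition because $X$ is a \emph{commutative} monoid, so $f(m)+f(m')+g(n)+g(n')=f(m)+g(n)+f(m')+g(n')$.
\item It preserves the action by (M1) additivity in the middle slot of $X$:
\[
[f,g](a\alpha m\beta b,\ a\alpha n\beta b)=a\alpha f(m)\beta b+a\alpha g(n)\beta b=a\alpha\big(f(m)+g(n)\big)\beta b .
\]
\end{itemize}

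The identities $[f,g]\iota_M=f$ and $[f,g]\iota_N=g$ then follow from $f(m)+g(0)=f(m)$ and $f(0)+g(n)=g(n)$.

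For uniqueness, suppose $h:M\times N\to X$ satisfies $h\iota_M=f$ and $h\iota_N=g$. Since $(m,n)=(m,0)+(0,n)$ and $h$ is additive, $h(m,n)=f(m)+g(n)$. This is the one step where the absence of subtraction might be a worry, but it is not: only additivity of $h$ and the decomposition of $(m,n)$ into its two components are used.

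Finally, to conclude that $M\times N$ is a biproduct, I verify the usual identities against the zero object of Proposition~\ref{prop:zero-object}:
\[
\pi_1\iota_M=\mathrm{id}_M,\quad \pi_2\iota_N=\mathrm{id}_N,\quad \pi_2\iota_M=0_{M,N},\quad \pi_1\iota_N=0_{N,M}.
\]
In place of the additive identity $\iota_M\pi_1+\iota_N\pi_2=\mathrm{id}$, which needs a group structure on hom-sets, I use the pointwise sum of morphisms. That sum is a morphism by the same (M1) computation as above.

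I expect no step to be a serious obstacle. The only points that need care are commutativity of $+$ in $X$, which is needed for additivity of $[f,g]$, and zero stability (M3), which is needed for $\iota_M$ and $\iota_N$ to respect the action.
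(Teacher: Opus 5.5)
Your proof is correct and takes the same approach as the paper. Both use the copairing $(m,n)\mapsto f(m)+g(n)$ and get uniqueness from $(m,n)=(m,0)+(0,n)$; you also check that $\iota_M$, $\iota_N$ and the copairing respect the action, via (M3) and (M1), which the paper leaves implicit. One small correction: the identity $\iota_M\pi_1+\iota_N\pi_2=\mathrm{id}$ does not need a group structure on hom-sets, since it holds as written for the pointwise sum because $(m,0)+(0,n)=(m,n)$.
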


\begin{proof}
Let $f:M\to X$ and $g:N\to X$ be morphisms.
Define $h:M\times N\to X$ by $h(m,n)=f(m)+g(n)$.
Then $h\iota_M=f$ and $h\iota_N=g$.
If $h':M\times N\to X$ also satisfies $h'\iota_M=f$ and $h'\iota_N=g$, then
\[
h'(m,n)=h'((m,0)+(0,n))=h'(m,0)+h'(0,n)=f(m)+g(n)=h(m,n),
\]
so $h'=h$.
\end{proof}

\subsection{Equalizers and pullbacks}\label{subsec:limits}
\begin{proposition}[Equalizers]\label{prop:equalizer}
Let $f,g:M\to N$ be morphisms.
The subset
\[
\mathsf{Eq}(f,g):=\{\,m\in M\mid f(m)=g(m)\,\}
\]
is a subsemimodule of $M$ and the inclusion $e:\mathsf{Eq}(f,g)\hookrightarrow M$ is an equalizer of $(f,g)$.
\end{proposition}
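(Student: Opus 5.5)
The plan is to check two things in turn: that $\mathsf{Eq}(f,g)$ is closed under the operations of $M$, and then that the inclusion has the equalizer universal property. Both steps use only that $f$ and $g$ are morphisms in the sense of Definition~\ref{def:morphism}, together with the unary-operation viewpoint of the preceding subsection.

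\emph{Closure.} We verify that $\mathsf{Eq}(f,g)$ is closed under $0$, under $+$, and under every operation $\lambda_{a,\alpha,\beta,b}$.
\begin{itemize}
\item Since $f$ and $g$ are monoid morphisms, $f(0)=0=g(0)$, so $0\in\mathsf{Eq}(f,g)$.
\item If $f(m)=g(m)$ and $f(m')=g(m')$, then $f(m+m')=f(m)+f(m')=g(m)+g(m')=g(m+m')$.
\item For $a,b\in T$ and $\alpha,\beta\in\Gamma$, equivariance gives
\[
f(a\alpha m\beta b)=a\alpha f(m)\beta b=a\alpha g(m)\beta b=g(a\alpha m\beta b).
\]
\end{itemize}
So $\mathsf{Eq}(f,g)$ is a subsemimodule in the sense of Definition~\ref{def:subsemimodule}. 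The restricted operations satisfy (M1)--(M4), because these are identities that already hold in $M$. Hence $\mathsf{Eq}(f,g)$ is an object of $\mathsf{SMod}_{T,\Gamma}$, and the inclusion $e$ is a morphism. By construction $fe=ge$.

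\emph{Universal property.} Let $h:X\to M$ be a morphism with $fh=gh$. Then for every $x\in X$ we have $f(h(x))=g(h(x))$, so $h(x)\in\mathsf{Eq}(f,g)$. Hence $h$ corestricts to a function $\bar h:X\to\mathsf{Eq}(f,g)$ with $e\bar h=h$.

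The map $\bar h$ is a morphism. The operations on $\mathsf{Eq}(f,g)$ are the restrictions of those on $M$, so the identities $\bar h(x+x')=\bar h(x)+\bar h(x')$ and $\bar h(a\alpha x\beta b)=a\alpha\bar h(x)\beta b$ are just the corresponding identities for $h$, read inside $\mathsf{Eq}(f,g)$.

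Uniqueness holds because $e$ is injective: if $e\bar h'=h=e\bar h$, then $\bar h'=\bar h$ pointwise.

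\emph{Main obstacle.} There is no real difficulty here. The only point requiring care is that closure must be checked against the full five-ary action, that is, for all choices of $(a,\alpha,\beta,b)$, and not against a unary substitute. Equivariance of $f$ and $g$ in Definition~\ref{def:morphism} provides exactly this, so the third closure bullet covers it.
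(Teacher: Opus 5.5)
Your proof is correct and follows the same route as the paper: you check closure of $\mathsf{Eq}(f,g)$ under $0$, $+$, and every operator $m\mapsto a\alpha m\beta b$ using the morphism identities, then obtain the universal property by corestricting any equalizing morphism through the inclusion. You spell out two details the paper leaves as immediate, namely that the corestriction is a morphism and that uniqueness follows from injectivity of $e$.
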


\begin{proof}
If $m,m'\in\mathsf{Eq}(f,g)$, then
$f(m+m')=f(m)+f(m')=g(m)+g(m')=g(m+m')$, so $m+m'\in\mathsf{Eq}(f,g)$.
Also $f(0)=0=g(0)$, so $0\in\mathsf{Eq}(f,g)$.
If $m\in\mathsf{Eq}(f,g)$ then
\[
f(a\alpha m\beta b)=a\alpha f(m)\beta b=a\alpha g(m)\beta b=g(a\alpha m\beta b),
\]
hence $a\alpha m\beta b\in\mathsf{Eq}(f,g)$.
Universal property is immediate: a morphism $u:X\to M$ equalizes $f,g$ iff its image lies in
$\mathsf{Eq}(f,g)$.
\end{proof}

\begin{proposition}[Pullbacks]\label{prop:pullback}
Given morphisms $f:M\to P$ and $g:N\to P$, the pullback is the subsemimodule
\[
M\times_P N:=\{(m,n)\in M\times N\mid f(m)=g(n)\}
\]
with induced structure, together with the restricted projections.
\end{proposition}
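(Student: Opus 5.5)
The plan is to verify directly that $M\times_P N$ is closed under the semimodule operations of the product $M\times N$ (Proposition~\ref{prop:product}). We then check the universal property of a pullback by factoring through the product. This mirrors the proof of Proposition~\ref{prop:equalizer}. In fact a cleaner route is to realize $M\times_P N$ as the equalizer $\mathsf{Eq}(f\pi_1,\,g\pi_2)$ of the two composites $M\times N\rightrightarrows P$. Both the subsemimodule property and the universal property then follow from Propositions~\ref{prop:product} and~\ref{prop:equalizer}. I would present the equalizer route and remark that it is the standard construction of pullbacks from products and equalizers.

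\textbf{Closure.} The zero $(0,0)$ lies in $M\times_P N$ since $f(0)=0=g(0)$. If $(m,n),(m',n')\in M\times_P N$, then additivity of $f,g$ gives $f(m+m')=f(m)+f(m')=g(n)+g(n')=g(n+n')$. For the five-ary action, Definition~\ref{def:morphism} gives
\[
f(a\alpha m\beta b)=a\alpha f(m)\beta b=a\alpha g(n)\beta b=g(a\alpha n\beta b).
\]
So $a\alpha(m,n)\beta b=(a\alpha m\beta b,\,a\alpha n\beta b)$ stays in $M\times_P N$ for all $a,b\in T$ and $\alpha,\beta\in\Gamma$. The axioms (M1)--(M4) are inherited from $M\times N$ because they are identities. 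The restricted projections $p_1,p_2$ are morphisms as restrictions of $\pi_1,\pi_2$, and $fp_1=gp_2$ holds by definition of the subset.

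\textbf{Universal property.} Let $u:X\to M$ and $v:X\to N$ be morphisms with $fu=gv$. The morphism $\langle u,v\rangle:X\to M\times N$ from Proposition~\ref{prop:product} satisfies $f\pi_1\langle u,v\rangle=g\pi_2\langle u,v\rangle$. Hence its image lies in $M\times_P N=\mathsf{Eq}(f\pi_1,g\pi_2)$, and it corestricts to a morphism $w:X\to M\times_P N$ with $p_1w=u$ and $p_2w=v$. Uniqueness holds because any such $w$ is determined componentwise by $p_1w$ and $p_2w$, since the inclusion into $M\times N$ is injective.

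None of these steps is a real obstacle. The only point needing care is that the action is five-ary: compatibility must be checked for the full datum $(a,\alpha,\cdot,\beta,b)$ simultaneously in both coordinates, which is exactly what the displayed identity does. Everything else is the routine componentwise verification anticipated in the algebraic viewpoint of Section~\ref{sec:cat}.
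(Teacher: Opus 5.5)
Your proposal is correct and takes essentially the same approach as the paper: you check closure of $M\times_P N$ under $0$, $+$ and the five-ary action using that $f,g$ are morphisms, then get the universal property set-theoretically by corestricting $\langle u,v\rangle$ to the subset. You spell out the details, including the equalizer framing, that the paper's two-line proof leaves implicit.
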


\begin{proof}
Closure under $+$ and the action is checked using that $f$ and $g$ preserve them.
The universal property is the standard set-theoretic one, with morphism structure verified componentwise.
\end{proof}

\subsection{Coequalizers and pushouts}\label{subsec:colimits}
Coequalizers exist by congruence quotients.

\begin{proposition}[Coequalizers]\label{prop:coequalizers-exist}
For any parallel pair $f,g:M\rightrightarrows N$, the coequalizer exists and is
the quotient $N/{\equiv_{f,g}}$, where $\equiv_{f,g}$ is the congruence generated by
$\{(f(m),g(m))\mid m\in M\}$.
\end{proposition}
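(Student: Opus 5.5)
The plan is to reduce the claim to Proposition~\ref{prop:coeq}, after first checking carefully that the generated congruence $\equiv_{f,g}$ exists and that the quotient $N/{\equiv_{f,g}}$ really is an object of $\mathsf{SMod}_{T,\Gamma}$.

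\emph{Existence of the generated congruence.} Here I use the unary viewpoint of Section~\ref{sec:cat}: a semimodule congruence is an equivalence relation on $N$ that is closed under $+z$ and under every operator $\lambda_{a,\alpha,\beta,b}$. Such relations are closed under arbitrary intersections, and $N\times N$ is one of them. So $\equiv_{f,g}$ can be taken as the intersection of all semimodule congruences containing $R=\{(f(m),g(m))\mid m\in M\}$. This is a congruence in the sense of Definition~\ref{def:semimod-cong}, and it is the smallest one containing $R$.

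\emph{The quotient is a semimodule.} On classes I define
\[
[x]+[y]=[x+y],\qquad a\alpha[x]\beta b=[a\alpha x\beta b].
\]
Addition is well defined by applying the compatibility with $+z$ twice, once in each variable, and using transitivity. The action is well defined because $\equiv_{f,g}$ is stable under each full operator $m\mapsto a\alpha m\beta b$. The axioms (M1)--(M4) are equations, so they descend from $N$ to the quotient, since the projection $q$ is surjective. The map $q$ then preserves $+$, $0$ and the action by construction, so it is a morphism.

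\emph{Universal property.} This is Proposition~\ref{prop:coeq}. For completeness I will repeat its key step, which is the one point needing an argument. Given $h:N\to P$ with $hf=hg$, the kernel congruence $\equiv_h$ is a semimodule congruence. By Lemma~\ref{lem:kercong} it is stable under $+$, and it is stable under the action because $h(a\alpha x\beta b)=a\alpha h(x)\beta b$. It contains $R$, so by minimality $\equiv_{f,g}\subseteq\equiv_h$. Hence $\bar h([x])=h(x)$ is well defined. It is a morphism because $h$ is, and it is unique because $q$ is surjective.

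I expect the only real obstacle to be justifying that $h$ is constant on the classes of $\equiv_{f,g}$. The proof of Proposition~\ref{prop:coeq} passes over this quickly, and the minimality argument via $\equiv_h$ is what settles it. Everything else is routine bookkeeping.
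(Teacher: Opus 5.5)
Your proposal is correct and takes the same route as the paper, which proves this statement simply by citing Proposition~\ref{prop:coeq}. Your extra checks make explicit what that proof leaves implicit: that $\equiv_{f,g}$ exists as an intersection of congruences, that the quotient operations are well defined, and the minimality argument $\equiv_{f,g}\subseteq\equiv_h$ showing $h$ is constant on classes.
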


\begin{proof}
This is Proposition~\ref{prop:coeq}.
\end{proof}

\begin{proposition}[Pushouts]\label{prop:pushout}
Given $u:C\to A$ and $v:C\to B$, form the biproduct $A\oplus B:=A\times B$ and consider
the two morphisms
\[
\iota_A\circ u,\ \iota_B\circ v : C \rightrightarrows A\oplus B.
\]
The pushout of $(u,v)$ is the coequalizer of this parallel pair:
\[
A\ \xleftarrow{u}\ C\ \xrightarrow{v}\ B
\quad\leadsto\quad
(A\oplus B)/{\equiv},
\]
where $\equiv$ is the congruence on $A\oplus B$ generated by
$\bigl(\iota_A(u(c)),\ \iota_B(v(c))\bigr)$ for all $c\in C$.
\end{proposition}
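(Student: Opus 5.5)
The plan is the standard reduction of pushouts to coproducts and coequalizers, using the binary coproduct of Proposition~\ref{prop:coproduct} and the coequalizer of Proposition~\ref{prop:coeq}. Write $Q:=(A\oplus B)/{\equiv}$, $q:A\oplus B\to Q$ for the quotient map, and set
\[
j_A:=q\circ\iota_A:A\to Q,\qquad j_B:=q\circ\iota_B:B\to Q .
\]
By Proposition~\ref{prop:coeq}, the congruence generated by the pairs $\bigl(\iota_A(u(c)),\iota_B(v(c))\bigr)$ is exactly $\equiv_{\iota_Au,\iota_Bv}$, so $q$ is the coequalizer of $\iota_A u$ and $\iota_B v$. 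In particular $q\iota_A u=q\iota_B v$, which says that $j_A u=j_B v$, so the square commutes.

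For the universal property, let $x:A\to X$ and $y:B\to X$ be morphisms with $xu=yv$. By Proposition~\ref{prop:coproduct} there is a unique $h:A\oplus B\to X$ with $h\iota_A=x$ and $h\iota_B=y$, namely $h(a,b)=x(a)+y(b)$. Then
\[
h\iota_A u=xu=yv=h\iota_B v,
\]
so $h$ coequalizes the parallel pair. Proposition~\ref{prop:coeq} then gives a unique $\bar h:Q\to X$ with $\bar h q=h$, and hence $\bar h j_A=x$ and $\bar h j_B=y$.

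Uniqueness comes from combining the two uniqueness clauses. Suppose $k:Q\to X$ also satisfies $kj_A=x$ and $kj_B=y$. Then $kq$ satisfies $(kq)\iota_A=x$ and $(kq)\iota_B=y$, so uniqueness in the coproduct gives $kq=h$. Uniqueness in the coequalizer then gives $k=\bar h$. An equivalent way to see this is that $q$ is surjective and $Q$ is generated by the images of $j_A$ and $j_B$.

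The main obstacle is mild: we must be sure the generated congruence is a genuine semimodule congruence, stable under every operator $a\alpha(\cdot)\beta b$, so that $Q$ is an object of $\mathsf{SMod}_{T,\Gamma}$ and $q$ is a morphism. This is already built into Proposition~\ref{prop:coeq} and Definition~\ref{def:semimod-cong}. If needed, we would note that the intersection of all semimodule congruences containing the generating pairs is again a semimodule congruence, and that $\ker$ of $h$ is such a congruence by the computation in Lemma~\ref{lem:kercong}, extended to the action. Consequently $\equiv\subseteq\equiv_h$, which is exactly the factorization used above.
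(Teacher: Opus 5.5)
Your proposal is correct and takes the same route as the paper: the pushout is built as the coequalizer (Proposition~\ref{prop:coeq}) of $\iota_A u,\iota_B v$ out of the biproduct of Proposition~\ref{prop:coproduct}. You write out in full the ``standard argument'' for existence and uniqueness of the induced map, which the paper's proof only cites.
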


\begin{proof}
A pushout in any category can be constructed as a coequalizer of a coproduct diagram.
Here coproducts are provided by Proposition~\ref{prop:coproduct} and coequalizers by
Proposition~\ref{prop:coequalizers-exist}.
The resulting object satisfies the universal property by the standard argument.
\end{proof}

\subsection{Kernels and cokernels in congruence form}\label{subsec:kernels}
We use equalizers and coequalizers against the zero morphism.

\begin{definition}[Kernel and cokernel]\label{def:ker-coker}
Let $f:M\to N$.
The \emph{kernel} $\ker(f)\to M$ is the equalizer of $f$ and $0_{M,N}$.
The \emph{cokernel} $N\to \mathrm{coker}(f)$ is the coequalizer of $f$ and $0_{M,N}$.
\end{definition}

\begin{proposition}[Concrete kernels]\label{prop:kernel-concrete}
$\ker(f)=\{m\in M\mid f(m)=0\}$ is a subsemimodule of $M$ with the inclusion as kernel.
\end{proposition}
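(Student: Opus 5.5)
The plan is to obtain this as the special case $g=0_{M,N}$ of Proposition~\ref{prop:equalizer}. By Definition~\ref{def:ker-coker}, $\ker(f)$ is the equalizer of $f$ and $0_{M,N}$. Proposition~\ref{prop:equalizer} identifies that equalizer with the subsemimodule $\mathsf{Eq}(f,0_{M,N})$ together with its inclusion. Since $0_{M,N}(m)=0$ for every $m$, this set is exactly $\{m\in M\mid f(m)=0\}$.

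So the proof will run in three short steps.

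First, check that the zero morphism $0_{M,N}$ is a genuine morphism of $\mathsf{SMod}_{T,\Gamma}$. It is the composite $M\to 0\to N$ through the zero object of Proposition~\ref{prop:zero-object}. Directly, it preserves addition trivially, and it preserves the action because $a\alpha 0\beta b=0$ by zero stability (M3).

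Second, for completeness, verify directly that $K:=\{m\mid f(m)=0\}$ is a subsemimodule:
\begin{itemize}
\item $f(0)=0$, so $0\in K$;
\item $f(m+m')=f(m)+f(m')=0+0=0$, so $K$ is closed under addition;
\item $f(a\alpha m\beta b)=a\alpha f(m)\beta b=a\alpha 0\beta b=0$ by (M3), so $K$ is closed under the action.
\end{itemize}

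Third, state the universal property. Take a morphism $u:X\to M$ with $fu=0_{X,N}$. Its image lies in $K$, so it corestricts to a morphism $X\to K$. That corestriction is a morphism because $K$ carries the induced structure, and it is unique because the inclusion is injective.

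There is no real obstacle here. The only point needing care is the appeal to zero stability (M3): it is what makes $0_{M,N}$ a morphism and what makes $K$ closed under the five-ary action.
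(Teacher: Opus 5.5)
Your proposal is correct and matches the paper's proof, which is exactly the one-line reduction to Proposition~\ref{prop:equalizer} applied to the pair $(f,0_{M,N})$. Your extra checks are sound and make explicit what the paper leaves implicit: that $0_{M,N}$ is a morphism, and that $\{m\in M\mid f(m)=0\}$ is closed under the five-ary action, both via zero stability (M3).
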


\begin{proof}
This is Proposition~\ref{prop:equalizer} applied to $(f,0_{M,N})$.
\end{proof}

\begin{proposition}[Concrete cokernels]\label{prop:cokernel-concrete}
$\mathrm{coker}(f)$ is the quotient $N/{\rho}$ where $\rho$ is the congruence generated by
$\{(f(m),0)\mid m\in M\}$.
\end{proposition}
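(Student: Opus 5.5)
The plan is to reduce the statement to Proposition~\ref{prop:coequalizers-exist} (equivalently Proposition~\ref{prop:coeq}), since Definition~\ref{def:ker-coker} defines $\mathrm{coker}(f)$ as the coequalizer of the parallel pair $f,0_{M,N}:M\rightrightarrows N$.

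\textbf{Step 1: apply the coequalizer proposition.} Applied to this pair, Proposition~\ref{prop:coequalizers-exist} gives that the coequalizer is $N/{\equiv_{f,0_{M,N}}}$. Here $\equiv_{f,0_{M,N}}$ is the semimodule congruence on $N$ generated by the pairs
\[
\{(f(m),0_{M,N}(m))\mid m\in M\}=\{(f(m),0)\mid m\in M\}.
\]
This generating set is exactly the one defining $\rho$, so $\rho={\equiv_{f,0_{M,N}}}$. The canonical projection $q:N\to N/\rho$ is therefore the cokernel.

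\textbf{Step 2: confirm the quotient is an object of $\mathsf{SMod}_{T,\Gamma}$.} One must check that ``generated congruence'' means generation among semimodule congruences (Definition~\ref{def:semimod-cong}), i.e.\ congruences closed under $+$ and under every operator $\lambda_{a,\alpha,\beta,b}$, and not merely monoid congruences. Only then is $N/\rho$ a ternary $\Gamma$--semimodule, with the induced operations as described after Definition~\ref{def:semimod-cong}.

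To make this explicit, I would describe $\rho$ as the intersection of all semimodule congruences on $N$ containing the pairs $(f(m),0)$. This intersection is nonempty, since the total relation $N\times N$ is one such congruence. An intersection of congruences is again a congruence, so $\rho$ is the smallest semimodule congruence containing the generators.

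\textbf{Step 3: spell out the universal property.} Although Proposition~\ref{prop:coeq} already covers it, I would record it directly:
\begin{enumerate}[label=\textnormal{(\roman*)}]
\item $qf=q\circ 0_{M,N}=0$, because each pair $(f(m),0)$ lies in $\rho$.
\item Given $h:N\to P$ with $hf=0$, the kernel congruence $\equiv_h$ is a semimodule congruence. This follows from Lemma~\ref{lem:kercong} together with $h(a\alpha x\beta b)=a\alpha h(x)\beta b$.
\item $\equiv_h$ contains every pair $(f(m),0)$, so minimality of $\rho$ gives $\rho\subseteq{\equiv_h}$.
\item Hence $h$ factors through $q$ as $\bar h([n])=h(n)$. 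This factorization is unique because $q$ is surjective.
\end{enumerate}

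The only point needing care is the closure of $\rho$ under the five--ary action in Step 2. Everything else is formal.
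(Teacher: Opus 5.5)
Your proposal is correct and takes the same route as the paper, whose entire proof is Proposition~\ref{prop:coequalizers-exist} applied to the pair $(f,0_{M,N})$. Your Steps 2--3 add useful detail that the paper leaves implicit in Proposition~\ref{prop:coeq}: that $\rho$ must be generated as a semimodule congruence (closed under the five--ary action), and the explicit verification of the universal property.
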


\begin{proof}
This is Proposition~\ref{prop:coequalizers-exist} applied to $(f,0_{M,N})$.
\end{proof}

\subsection{Regular epimorphisms and images}\label{subsec:regular-epis}
We now isolate the regular epimorphisms and the corresponding image factorization.
The guiding categorical background is the modern ``ideal exactness'' and factorization system
literature \cite{Janelidze2024,MantovaniMessora2025,Gray2025,Stepan2023,Juran2025}.

\begin{definition}[Regular epimorphism]\label{def:repi}
A morphism $e:X\to Y$ in $\mathsf{SMod}_{T,\Gamma}$ is a \emph{regular epimorphism} if
it is a coequalizer of some parallel pair.
\end{definition}

\begin{proposition}[Regular epis are surjective]\label{prop:repi-surj}
Every regular epimorphism in $\mathsf{SMod}_{T,\Gamma}$ is surjective on the underlying sets.
\end{proposition}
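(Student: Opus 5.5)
Let $e:X\to Y$ be a regular epimorphism, so by \cref{def:repi} it is a coequalizer of some parallel pair $f,g:W\rightrightarrows X$. By \cref{prop:coeq}, the projection $q:X\to X/{\equiv_{f,g}}$ is also a coequalizer of $(f,g)$. Coequalizers are unique up to unique isomorphism, so there is an isomorphism $\varphi:X/{\equiv_{f,g}}\to Y$ with $\varphi\circ q=e$.

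The canonical projection $q$ sends $x$ to its class $[x]$, so it is surjective on underlying sets. An isomorphism in $\mathsf{SMod}_{T,\Gamma}$ has an inverse morphism, which is in particular an inverse function, so $\varphi$ is a bijection. Therefore $e=\varphi\circ q$ is surjective.

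The only point needing care is that $X/{\equiv_{f,g}}$ really is an object of $\mathsf{SMod}_{T,\Gamma}$ and that $q$ is a morphism. Both hold because $\equiv_{f,g}$ is a semimodule congruence in the sense of \cref{def:semimod-cong}. Concretely, take the generated congruence to be the intersection of all semimodule congruences containing the generating pairs; this intersection is again stable under $+$ and under every operator $\lambda_{a,\alpha,\beta,b}$. The operations on classes are then well defined, as remarked after \cref{def:semimod-cong}.

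If one prefers to avoid appealing to uniqueness of coequalizers, a direct argument also works. Consider the subsemimodule $e(X)\subseteq Y$; it is closed under $+$, $0$ and the action because $e$ is a morphism. The map $e$ factors through $e(X)$, and the universal property of $e$ as a coequalizer, applied to the corestriction $X\to e(X)$, gives a morphism $s:Y\to e(X)$. Composing with the inclusion gives $\iota\circ s:Y\to Y$ with $\iota\circ s\circ e=e$. Since a coequalizer is an epimorphism, $\iota\circ s=\mathrm{id}_Y$. Hence the inclusion $\iota$ is surjective and $e(X)=Y$. I would present the first argument and mention this one as an alternative.
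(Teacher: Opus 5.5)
Your proposal is correct and follows the paper's proof: a coequalizer of $f,g$ is, up to the canonical comparison isomorphism, the congruence-quotient projection $q:X\to X/{\equiv_{f,g}}$ of Proposition~\ref{prop:coeq}, and that projection is surjective. You spell out two points the paper's one-line argument leaves implicit, namely the isomorphism $\varphi$ with $\varphi\circ q=e$ and the requirement that $\equiv_{f,g}$ be a semimodule congruence. Your alternative image-based argument is also valid, because the corestriction $X\to e(X)$ coequalizes $f,g$ (the inclusion $e(X)\hookrightarrow Y$ is injective).
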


\begin{proof}
If $e$ is a coequalizer, then by Proposition~\ref{prop:coequalizers-exist} it is a quotient map
$Y\to Y/{\equiv}$ for some congruence. Such a quotient map is surjective by definition of the
quotient set.
\end{proof}

\begin{proposition}[Surjections are regular epis]\label{prop:surj-repi}
If $e:X\to Y$ is surjective, then $e$ is a regular epimorphism.
\end{proposition}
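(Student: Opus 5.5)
The plan is to show that a surjection $e:X\to Y$ is the coequalizer of its own kernel pair. Let
\[
K:=X\times_Y X=\{(x,x')\in X\times X\mid e(x)=e(x')\},
\]
which by Proposition~\ref{prop:pullback} is a subsemimodule of $X\times X$. Let $p_1,p_2:K\to X$ be the restricted projections; they are morphisms and satisfy $e p_1=e p_2$. By Proposition~\ref{prop:coequalizers-exist}, the coequalizer of $(p_1,p_2)$ is the quotient $q:X\to X/{\equiv_{p_1,p_2}}$. Here $\equiv_{p_1,p_2}$ is the congruence generated by the pairs $(x,x')$ with $e(x)=e(x')$.

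First, I identify this generated congruence with the kernel congruence $\equiv_e$. By Lemma~\ref{lem:kercong}, $\equiv_e$ is a monoid congruence. It is also stable under every operator $x\mapsto a\alpha x\beta b$, because $e(a\alpha x\beta b)=a\alpha e(x)\beta b$. Hence $\equiv_e$ is a semimodule congruence in the sense of Definition~\ref{def:semimod-cong}. Its set of pairs is exactly $K$, so it contains the generating relation and is therefore the smallest congruence containing it. So $\equiv_{p_1,p_2}={\equiv_e}$.

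Next, I compare the two quotients of $X$. Proposition~\ref{prop:univ-factor} gives the factorization $e=\bar e\circ\pi$ with $\pi:X\to X/{\equiv_e}$. The map $\bar e$ is injective onto $\mathrm{Im}(e)=Y$, using surjectivity of $e$, so by Theorem~\ref{thm:first-iso} it is an isomorphism of ternary $\Gamma$--semimodules. It remains to check that $\bar e^{-1}$ is a morphism. This holds because the inverse of a bijective morphism of algebras preserves operations: from $\bar e(a\alpha u\beta b)=a\alpha\bar e(u)\beta b$ we can apply $\bar e^{-1}$ to both sides. Since $\pi=q$ is a coequalizer of $(p_1,p_2)$, and $e$ differs from $\pi$ by an isomorphism of the codomain, $e$ is also a coequalizer of $(p_1,p_2)$. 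Hence $e$ is a regular epimorphism.

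The only step needing care is the first one, the equality of the generated congruence with $\equiv_e$. The subtle point there is the five-ary action: $\equiv_e$ must be closed under every operator $\lambda_{a,\alpha,\beta,b}$, and this is exactly where the morphism axiom of Definition~\ref{def:morphism} is used. As an alternative, the universal property can be checked directly. If $h:X\to P$ satisfies $hp_1=hp_2$, then $h$ is constant on the fibers of $e$. Surjectivity then lets us define $\bar h(y)=h(x)$ for any $x$ with $e(x)=y$, and $\bar h$ inherits additivity and equivariance from $h$ and $e$. Uniqueness of $\bar h$ follows from surjectivity of $e$.
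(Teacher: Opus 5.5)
Your proof is correct, but your main route differs from the paper's. The paper's proof is exactly your closing ``alternative'': it verifies the coequalizer universal property of $e$ against the kernel-pair projections directly. It defines $\bar h(y):=h(x)$ on a chosen preimage $x$ and checks additivity and equivariance by lifting to preimages.

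Your main argument works differently. You compute the coequalizer of $(p_1,p_2)$ by the general recipe of Proposition~\ref{prop:coeq}. You observe that the generating relation $K$ is already a semimodule congruence, namely $\equiv_e$, so generation adds nothing; this is where the five-ary equivariance of $e$ enters. You then identify $X/{\equiv_e}$ with $Y$ through $\bar e$ from Proposition~\ref{prop:univ-factor} and Theorem~\ref{thm:first-iso}, and transport the coequalizer property along that isomorphism.

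The direct route is more self-contained. It uses only surjectivity and the kernel-pair condition. It needs neither the generated-congruence description of coequalizers nor the first isomorphism theorem, both of which the paper proves rather tersely.

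Your route is more structural. It exhibits the coequalizer of the kernel pair concretely as the kernel-congruence quotient $X/{\equiv_e}$. This anticipates the kernel-pair/congruence correspondence the paper later uses for Barr-exactness (Lemma~\ref{lem:kpair-congruence}, Proposition~\ref{prop:cong-effective}). It also pinpoints exactly where the morphism axiom is used, whereas the paper compresses the check that $\bar h$ is a morphism into ``one checks''.
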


\begin{proof}
Let $R\subseteq X\times X$ be the kernel pair relation of $e$:
\[
R=\{(x,x')\in X\times X\mid e(x)=e(x')\}.
\]
Let $p_1,p_2:R\rightrightarrows X$ be the projections.
Then $e p_1=e p_2$ by definition of $R$.
Let $h:X\to Z$ be any morphism with $h p_1=h p_2$.
Define $\bar h:Y\to Z$ by choosing, for each $y\in Y$, an element $x\in X$ with $e(x)=y$
(using surjectivity) and setting $\bar h(y):=h(x)$.
If $x'$ is another choice with $e(x')=y$, then $(x,x')\in R$ and so $h(x)=h(x')$.
Hence $\bar h$ is well-defined.
It is a morphism because $e$ is surjective and $h$ preserves the operations; one checks
$\bar h(y+y')=\bar h(y)+\bar h(y')$ and $\bar h(a\alpha y\beta b)=a\alpha \bar h(y)\beta b$
by lifting to preimages in $X$ and applying $h$.
Finally $\bar h\,e=h$ by construction, and uniqueness holds because $e$ is surjective.
Thus $e$ is the coequalizer of $p_1,p_2$.
\end{proof}

\begin{corollary}\label{cor:repi-iff-surj}
A morphism in $\mathsf{SMod}_{T,\Gamma}$ is a regular epimorphism if and only if it is surjective.
\end{corollary}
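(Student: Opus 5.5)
The corollary follows by combining the two preceding propositions, one for each direction of the equivalence.

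For the forward implication, let $e:X\to Y$ be a regular epimorphism, so $e$ is the coequalizer of some parallel pair $f,g:M\rightrightarrows X$. Coequalizers are unique up to unique isomorphism. By Proposition~\ref{prop:coequalizers-exist}, the canonical quotient $q:X\to X/{\equiv_{f,g}}$ is a coequalizer of the same pair, so there is an isomorphism $\phi:X/{\equiv_{f,g}}\to Y$ with $e=\phi\circ q$. The map $q$ is surjective by construction of the quotient set, and $\phi$ is bijective, so $e$ is surjective. This is precisely Proposition~\ref{prop:repi-surj}. The only point needing care is the transfer along $\phi$: the proof of Proposition~\ref{prop:repi-surj} tacitly identifies an arbitrary coequalizer with the concrete quotient map. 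I would make that identification explicit here by noting that an isomorphism in $\mathsf{SMod}_{T,\Gamma}$ is in particular a bijection of underlying sets, because it has a two-sided inverse morphism.

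For the converse, let $e$ be surjective. By Proposition~\ref{prop:surj-repi}, $e$ is the coequalizer of the two projections $p_1,p_2:R\rightrightarrows X$ from its kernel pair relation $R$, so $e$ is a regular epimorphism. One point has to be checked first: $R$ must be an object of $\mathsf{SMod}_{T,\Gamma}$, so that $p_1,p_2$ are morphisms of the category. I would settle this by observing that $R=X\times_Y X$ is the pullback of $e$ along itself. By Proposition~\ref{prop:pullback}, $R$ is then a subsemimodule of $X\times X$, and the restricted projections $p_1,p_2$ are morphisms. Equivalently, $R$ is the graph of the kernel congruence $\equiv_e$, which is closed under $+$ and under every operator $m\mapsto a\alpha m\beta b$ because $e$ preserves both.

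The argument is essentially bookkeeping, and I expect no serious obstacle. The two points that need attention are the isomorphism-invariance step in the forward direction and the verification that the kernel pair lives in the category in the converse direction.
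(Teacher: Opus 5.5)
Your proposal is correct and follows the paper's own argument: the corollary is stated without further proof as the combination of Proposition~\ref{prop:repi-surj} and Proposition~\ref{prop:surj-repi}. Your two extra checks fill in steps the paper leaves tacit. One is transporting surjectivity along the comparison isomorphism between an arbitrary coequalizer and the concrete quotient map. The other is verifying, via Proposition~\ref{prop:pullback}, that the kernel pair $R$ is a subsemimodule of $X\times X$, so that $p_1,p_2$ are morphisms.
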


\subsection{Regular images and the set-image}\label{subsec:images}
Let $f:M\to N$ be a morphism.

\begin{lemma}[The set-image is a subsemimodule]\label{lem:image-subsemimodule}
The subset $\mathrm{Im}(f):=f(M)\subseteq N$ is a subsemimodule of $N$.
\end{lemma}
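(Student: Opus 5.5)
The plan is to verify directly the three closure conditions of Definition~\ref{def:subsemimodule} for $\mathrm{Im}(f)=f(M)$: it contains $0$, it is closed under $+$, and it is closed under every operator $n\mapsto a\alpha n\beta b$. Each verification produces an explicit preimage in $M$, using only that $f$ is a morphism in the sense of Definition~\ref{def:morphism}. Equivalently, one could observe that $\mathrm{Im}(f)$ is the image of $M$ under the regular epi–mono factorization suggested by Proposition~\ref{prop:univ-factor}. The direct element-wise check is shorter, so that is the route I will take.

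First, $0=f(0)\in\mathrm{Im}(f)$, since $f$ is a monoid morphism. Second, given $n=f(m)$ and $n'=f(m')$, we have $n+n'=f(m)+f(m')=f(m+m')$, so $\mathrm{Im}(f)$ is a submonoid of $N$. Third, for $n=f(m)$ and arbitrary $a,b\in T$, $\alpha,\beta\in\Gamma$, the morphism condition gives
\[
a\alpha n\beta b=a\alpha f(m)\beta b=f(a\alpha m\beta b),
\]
and $a\alpha m\beta b\in M$ by Definition~\ref{def:tgs-semimodule}. Hence $\mathrm{Im}(f)$ is stable under the full five-ary action. This is the step that uses the complete $(a,\alpha,\cdot,\beta,b)$ datum, in the spirit of point (1) of the introduction.

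Finally, I will note that $\mathrm{Im}(f)$, with the restricted operations, satisfies axioms (M1)–(M4) automatically. Each axiom is an identity that holds in $N$, so it holds in any subset closed under the operations. This justifies treating $\mathrm{Im}(f)$ as an object of $\mathsf{SMod}_{T,\Gamma}$, which is how Theorem~\ref{thm:first-iso} implicitly uses it.

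There is no real obstacle here. The only point needing care is to avoid any appeal to subtraction or cosets: closure must be shown by exhibiting explicit preimages, which the argument above does.
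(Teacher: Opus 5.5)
Your proof is correct and is essentially the paper's own argument: $0=f(0)$, $f(m)+f(m')=f(m+m')$, and $a\alpha f(m)\beta b=f(a\alpha m\beta b)$. The alternative you mention via the factorization is not an independent proof and is best dropped, since the paper's image factorization (Proposition~\ref{prop:image-factorization}) relies on this very lemma.
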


\begin{proof}
If $y=f(m)$ and $y'=f(m')$, then $y+y'=f(m+m')\in f(M)$.
Also $0=f(0)\in f(M)$.
If $y=f(m)$ then $a\alpha y\beta b=a\alpha f(m)\beta b=f(a\alpha m\beta b)\in f(M)$.
\end{proof}

\begin{proposition}[Image factorization]\label{prop:image-factorization}
Every morphism $f:M\to N$ factors as
\[
M \xrightarrow{e_f} \mathrm{Im}(f) \xrightarrow{m_f} N
\]
where $e_f$ is surjective, $m_f$ is the inclusion, and $f=m_f\circ e_f$.
Moreover, $e_f$ is a regular epimorphism and $m_f$ is a monomorphism.
\end{proposition}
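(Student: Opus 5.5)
The plan is to define the two factors explicitly and then check each claimed property, citing the earlier results wherever possible. Set $e_f:M\to\mathrm{Im}(f)$ by $e_f(m):=f(m)$, the corestriction of $f$ to its set-image. Let $m_f:\mathrm{Im}(f)\hookrightarrow N$ be the inclusion. By Lemma~\ref{lem:image-subsemimodule}, $\mathrm{Im}(f)$ is a subsemimodule of $N$, so it carries the induced ternary $\Gamma$--semimodule structure. The identity $f=m_f\circ e_f$ then holds by definition, pointwise on $M$.

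Next I would check that both maps are morphisms in $\mathsf{SMod}_{T,\Gamma}$. For $e_f$, additivity and $e_f(0)=0$ come directly from those properties of $f$. Compatibility with the action is the identity
\[
e_f(a\alpha m\beta b)=f(a\alpha m\beta b)=a\alpha f(m)\beta b=a\alpha\, e_f(m)\,\beta b .
\]
The right-hand side is computed in $\mathrm{Im}(f)$, which is legitimate because the structure on $\mathrm{Im}(f)$ is the restriction of that on $N$. For $m_f$, preservation of $+$, $0$ and every operator $\lambda_{a,\alpha,\beta,b}$ is tautological, since the structure on $\mathrm{Im}(f)$ is induced from $N$.

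For the remaining properties: $e_f$ is surjective onto $\mathrm{Im}(f)=f(M)$ by construction, so Proposition~\ref{prop:surj-repi} (equivalently Corollary~\ref{cor:repi-iff-surj}) shows it is a regular epimorphism. In fact it is the coequalizer of the kernel-pair projections $p_1,p_2:R\rightrightarrows M$, where $R=\{(x,x')\mid f(x)=f(x')\}$. One could also identify $e_f$, through Theorem~\ref{thm:first-iso}, with the quotient map $M\to M/{\equiv_f}$ followed by an isomorphism. For $m_f$, suppose $u,v:X\to\mathrm{Im}(f)$ satisfy $m_f u=m_f v$. Then $u(x)=v(x)$ for every $x\in X$ as elements of $N$, hence as elements of $\mathrm{Im}(f)$, so $u=v$. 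Thus $m_f$ is a monomorphism.

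No step here is a genuine obstacle. The only point needing care is that $\mathrm{Im}(f)$ is closed under the full five-ary action, so that the codomain restriction $e_f$ is a morphism at all. That is exactly Lemma~\ref{lem:image-subsemimodule}, and it uses the compatibility $f(a\alpha m\beta b)=a\alpha f(m)\beta b$ for every $(a,\alpha,\beta,b)$ rather than any unary substitute. Everything else reduces to previously established results.
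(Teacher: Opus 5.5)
Your proof is correct and follows the paper's argument: you corestrict $f$ to $\mathrm{Im}(f)$ (a subsemimodule by Lemma~\ref{lem:image-subsemimodule}), invoke Corollary~\ref{cor:repi-iff-surj} to get that $e_f$ is a regular epimorphism, and get that $m_f$ is monic from injectivity of the inclusion. Your explicit checks that $e_f$ and $m_f$ are morphisms, and your direct cancellation argument for monicity, fill in details the paper leaves implicit.
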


\begin{proof}
Define $e_f(m)=f(m)\in \mathrm{Im}(f)$ and let $m_f$ be inclusion.
Then $e_f$ is surjective by definition of $\mathrm{Im}(f)$.
By Corollary~\ref{cor:repi-iff-surj}, $e_f$ is a regular epimorphism.
The inclusion $m_f$ is monic because it is injective on underlying sets.
\end{proof}

\begin{remark}[Relation to factorization systems]\label{rem:factorization}
The class of surjective morphisms and the class of injective morphisms define a strict
epi--mono factorization in this concrete algebraic category.
This is the basic finite-dimensional shadow of the factorization system viewpoint emphasized
in \cite{Stepan2023,Juran2025}. For ideal exactness and its refinements, see
\cite{Janelidze2024,MantovaniMessora2025,Gray2025}.
\end{remark}

\subsection{Images versus subtractive images}\label{subsec:subtractive-image}
Subtractive exactness later will require a replacement for the set-image.
We record the subtractive closure construction now.

\begin{definition}[Subtractive image]\label{def:subtractive-image}
Let $f:M\to N$.
The \emph{subtractive image} of $f$ is the subtractive closure of $\mathrm{Im}(f)$ in $N$:
\[
\mathrm{Im}^{\mathrm{sub}}(f):=\overline{\mathrm{Im}(f)}^{\,\mathrm{sub}}
=\{\,n\in N\mid \exists\,y,y'\in \mathrm{Im}(f)\ \text{with}\ n+y=y'\,\}.
\]
\end{definition}

\begin{lemma}\label{lem:subtractive-image-subsemimodule}
$\mathrm{Im}^{\mathrm{sub}}(f)$ is a subtractive subsemimodule of $N$ and it is the smallest
subtractive subsemimodule containing $\mathrm{Im}(f)$.
\end{lemma}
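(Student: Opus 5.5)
To prove the lemma, set $S:=\mathrm{Im}^{\mathrm{sub}}(f)$ and $I:=\mathrm{Im}(f)$. I will check, in order, that $S$ contains $I$, that $S$ is a subsemimodule, that $S$ is subtractive, and finally that $S$ is minimal. All four steps use only three facts: $I$ is a subsemimodule (Lemma~\ref{lem:image-subsemimodule}), addition is commutative, and each $\lambda_{a,\alpha,\beta,b}$ is a monoid endomorphism by (M1) and (M3).

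\emph{Containment, zero and sums.} If $y\in I$, then $y+0=y$ with $0,y\in I$, so $y\in S$; in particular $0\in S$. If $n,n'\in S$ with $n+y_1=y_1'$ and $n'+y_2=y_2'$, then by commutativity
\[
(n+n')+(y_1+y_2)=y_1'+y_2'.
\]
Both $y_1+y_2$ and $y_1'+y_2'$ lie in $I$, so $n+n'\in S$.

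\emph{Action.} Suppose $n+y=y'$ with $y,y'\in I$. Applying $\lambda_{a,\alpha,\beta,b}$ and using additivity (M1) gives
\[
a\alpha n\beta b+a\alpha y\beta b=a\alpha y'\beta b.
\]
Both $a\alpha y\beta b$ and $a\alpha y'\beta b$ lie in $I$ because $I$ is a subsemimodule. Hence $a\alpha n\beta b\in S$.

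\emph{Subtractivity.} This is the only step with any content, and it is where a careless argument fails: one must first produce witnesses in $I$, not merely in $S$. Suppose $m+s=s'$ with $s,s'\in S$, say $s+y=z$ and $s'+y'=z'$ for some $y,z,y',z'\in I$. Adding $y+y'$ to both sides of $m+s=s'$ gives
\[
m+(s+y)+y'=(s'+y')+y,
\]
that is, $m+(z+y')=z'+y$. Both $z+y'$ and $z'+y$ lie in $I$, so $m\in S$. Thus $\overline{S}^{\,\mathrm{sub}}\subseteq S$. The reverse inclusion $S\subseteq\overline{S}^{\,\mathrm{sub}}$ holds for any submonoid, since $s+0=s$. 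Hence $S$ is subtractive.

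\emph{Minimality.} Let $U$ be a subtractive subsemimodule with $I\subseteq U$, and let $n\in S$, say $n+y=y'$ with $y,y'\in I\subseteq U$. Then $n\in\overline{U}^{\,\mathrm{sub}}=U$. Hence $S\subseteq U$, which completes the proof.
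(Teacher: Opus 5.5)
Your proof is correct and follows the paper's route: closure under $0$, sums and the action via the same witness equations, and minimality directly from the definition of subtractive closure. The only difference is that you carry out the subtractivity step in full (adding $y+y'$ to obtain $m+(z+y')=z'+y$), which the paper only sketches as ``expand the witnessing equations and combine them.''
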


\begin{proof}
Let $S:=\mathrm{Im}^{\mathrm{sub}}(f)$.
If $n,n'\in S$ choose $y,y',\tilde y,\tilde y'\in \mathrm{Im}(f)$ with $n+y=y'$ and
$n'+\tilde y=\tilde y'$.
Then
\[
(n+n')+(y+\tilde y)=(n+y)+(n'+\tilde y)=y'+\tilde y'\in \mathrm{Im}(f),
\]
so $n+n'\in S$.
Also $0\in S$ since $0+0=0$ with $0\in\mathrm{Im}(f)$.
If $n\in S$ with $n+y=y'$ and $a,\alpha,\beta,b$ are arbitrary, then
\[
(a\alpha n\beta b) + (a\alpha y\beta b)=a\alpha(n+y)\beta b=a\alpha y'\beta b,
\]
and both $a\alpha y\beta b$ and $a\alpha y'\beta b$ lie in $\mathrm{Im}(f)$ by Lemma~\ref{lem:image-subsemimodule}.
Hence $a\alpha n\beta b\in S$.
Thus $S$ is a subsemimodule.
It is subtractive by construction: if $x+s=s'$ with $s,s'\in S$, expand the witnessing equations for $s,s'$
and combine them to show $x\in S$.
Minimality is immediate: any subtractive subsemimodule containing $\mathrm{Im}(f)$ must contain every $n$ with
$n+y=y'$ for $y,y'\in\mathrm{Im}(f)$.
\end{proof}

\begin{example}\label{ex:image-not-subtractive}
There exist morphisms whose set-image is not subtractive.

Let $B=\{0,1,2\}$ with commutative monoid operation
\[
x\oplus y := \min(2,x+y),
\]
so $1\oplus 2=2$. Let $X=\{0,e\}$ be the idempotent monoid with $e+e=e$.
Define a monoid morphism $f:X\to B$ by $f(0)=0$ and $f(e)=2$.
Then $\mathrm{Im}(f)=\{0,2\}\subseteq B$.

This submonoid is not subtractive. Indeed, $2\in \mathrm{Im}(f)$ and
\[
1\oplus 2 = 2 \in \mathrm{Im}(f),
\]
but $1\notin \mathrm{Im}(f)$. Hence $\mathrm{Im}(f)$ fails the subtractive condition.
Consequently $\mathrm{Im}^{\mathrm{sub}}(f)$ is strictly larger than $\mathrm{Im}(f)$.
\end{example}

\begin{remark}\label{rem:subtractive-exactness-ahead}
Example~\ref{ex:image-not-subtractive} explains why set-images cannot support a stable exactness notion.
Subtractive images repair this defect.
This repair is consistent with the ideal exactness viewpoint in \cite{Janelidze2024,MantovaniMessora2025,Gray2025}.
\end{remark}

%% file: sections/08-exactness-framework.tex
% ============================================================
% File: sections/08-exactness.tex
% Route (A): Regular + Barr-exact exactness
% ============================================================

\section{Regular and Barr-exact exactness in $\mathsf{SMod}_{T,\Gamma}$}\label{sec:exact}

Quotients in $\mathsf{SMod}_{T,\Gamma}$ are coequalizers, hence congruence quotients.
It follows that exactness must be formulated through kernel pairs and regular images.
This is the specification of this route.

\subsection{Regular epimorphisms and kernel pairs}\label{subsec:repi-kpair}
Regular epimorphisms were introduced in Definition~\ref{def:repi}.
By Corollary~\ref{cor:repi-iff-surj}, they coincide with surjective morphisms.

\begin{definition}[Kernel pair]\label{def:kernel-pair}
Let $f:M\to N$ be a morphism.
Its \emph{kernel pair} is the pullback $M\times_N M$ with projections
$p_1,p_2:M\times_N M\rightrightarrows M$.
\end{definition}

\begin{lemma}\label{lem:kpair-congruence}
Let $f:M\to N$.
The underlying relation of the kernel pair is
\[
R_f:=\{(x,y)\in M\times M \mid f(x)=f(y)\}.
\]
It is a congruence on $M$ in the sense of Definition~\ref{def:semimod-cong}.
\end{lemma}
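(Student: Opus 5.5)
The plan has two parts: first identify the kernel pair concretely, then check the congruence axioms of Definition~\ref{def:semimod-cong}.

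\emph{Identifying the underlying relation.} By Definition~\ref{def:kernel-pair}, the kernel pair is the pullback of $f$ along itself. By Proposition~\ref{prop:pullback} with $g=f$ and $P=N$, this pullback is the subsemimodule
\[
M\times_N M=\{(x,y)\in M\times M\mid f(x)=f(y)\}
\]
of the product $M\times M$ from Proposition~\ref{prop:product}, with the restricted projections. Its underlying set is literally $R_f$, so this step needs nothing beyond quoting those propositions. It also shows that $R_f$ coincides with the kernel congruence $\equiv_f$ of Definition~\ref{def:kercong}.

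\emph{Equivalence relation and additivity.} Reflexivity, symmetry and transitivity of $R_f$ are inherited from equality in $N$. Compatibility with addition follows from Lemma~\ref{lem:kercong}, applied to $f$ regarded as a morphism of the underlying commutative monoids: if $f(x)=f(y)$, then
\[
f(x+z)=f(x)+f(z)=f(y)+f(z)=f(y+z).
\]

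\emph{Stability under the five-ary action.} This is the only genuinely new point, and it is the one the introduction emphasizes. Take $(x,y)\in R_f$ and arbitrary $a,b\in T$, $\alpha,\beta\in\Gamma$. Since $f$ is a morphism in the sense of Definition~\ref{def:morphism}, it commutes with the full operator $m\mapsto a\alpha m\beta b$, so
\[
f(a\alpha x\beta b)=a\alpha f(x)\beta b=a\alpha f(y)\beta b=f(a\alpha y\beta b).
\]
Hence $(a\alpha x\beta b,\,a\alpha y\beta b)\in R_f$. Equivalently, $R_f$ is closed under the componentwise action on $M\times M$, which is exactly why it is a subsemimodule in Proposition~\ref{prop:pullback}.

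\emph{Where care is needed.} None of these steps is hard. The main thing to watch is that stability must be checked for the complete datum $(a,\alpha,\cdot,\beta,b)$ in one step, not reduced to some unary action. The morphism axiom supplies exactly this, so I expect no real obstacle.
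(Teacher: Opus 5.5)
Your proof is correct and follows essentially the paper's route: you identify $M\times_N M$ with $R_f$ via Proposition~\ref{prop:pullback} and then read off the congruence axioms. The paper deduces additive and scalar stability from $M\times_N M$ being a subsemimodule of $M\times M$, tacitly using $(z,z)\in R_f$ to pass from closure under $+$ to $(x+z,y+z)\in R_f$; you instead verify both directly from the morphism axioms, which is the same computation made explicit.
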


\begin{proof}
By Proposition~\ref{prop:pullback}, $M\times_N M$ is the subsemimodule of $M\times M$
cut out by the equation $f(x)=f(y)$.
Closure under $+$ and the action is inherited from $M\times M$.
Thus $R_f$ is stable under $+$ and under all operations $m\mapsto a\alpha m\beta b$.
Equivalence is immediate from equality in $N$.
Hence $R_f$ is a congruence.
\end{proof}

\subsection{Stability of regular epimorphisms under pullback}\label{subsec:repi-stable}
Regularity requires pullback-stability of regular epimorphisms.

\begin{proposition}\label{prop:pullback-surj}
Let $e:X\to Y$ be a surjective morphism in $\mathsf{SMod}_{T,\Gamma}$ and let $u:Z\to Y$ be any morphism.
Form the pullback $Z\times_Y X \to Z$.
Then the projection $\pi:Z\times_Y X\to Z$ is surjective.
Hence regular epimorphisms are stable under pullback.
\end{proposition}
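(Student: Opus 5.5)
The plan is to use the concrete description of pullbacks from Proposition~\ref{prop:pullback} together with the characterization of regular epimorphisms in Corollary~\ref{cor:repi-iff-surj}. By Proposition~\ref{prop:pullback}, the pullback is the subsemimodule
\[
Z\times_Y X=\{(z,x)\in Z\times X\mid u(z)=e(x)\}
\]
of the biproduct $Z\times X$. The map $\pi$ is the restriction of the first projection $\pi_1$ from Proposition~\ref{prop:product}. Since this restriction is a morphism, the only thing left to check is surjectivity on underlying sets.

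For surjectivity, fix $z\in Z$. Then $u(z)\in Y$, and since $e$ is surjective we may choose $x\in X$ with $e(x)=u(z)$. The pair $(z,x)$ then satisfies the defining equation, so it lies in $Z\times_Y X$, and $\pi(z,x)=z$. No compatibility with $+$ or with the five--ary action is needed at this stage: the pointwise choice of a preimage suffices. This is because surjectivity is a purely set-level property, and closure of the pullback under the operations was already established in Proposition~\ref{prop:pullback}.

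To deduce the final sentence, let $e$ be any regular epimorphism. By Proposition~\ref{prop:repi-surj}, $e$ is surjective. The argument above then shows that its pullback $\pi$ along an arbitrary $u$ is surjective. By Proposition~\ref{prop:surj-repi}, $\pi$ is therefore a regular epimorphism.

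One point needs care. The pullback in the categorical sense is defined only up to isomorphism. However, any pullback is isomorphic, over $Z$, to the concrete one, and surjectivity is preserved under composition with an isomorphism. So the conclusion holds for every choice of pullback.

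I expect no real obstacle. The only subtle step is making sure the concrete object of Proposition~\ref{prop:pullback} is genuinely the pullback in $\mathsf{SMod}_{T,\Gamma}$, and that fact is already recorded there.
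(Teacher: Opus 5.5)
Your proof is correct and is the paper's argument: choose $x\in X$ with $e(x)=u(z)$, note that $(z,x)$ lies in the concrete pullback with $\pi(z,x)=z$, and conclude by the equivalence of regular epimorphisms and surjections (Corollary~\ref{cor:repi-iff-surj}, which is your pairing of Propositions~\ref{prop:repi-surj} and~\ref{prop:surj-repi}). Your remark that an arbitrary pullback is isomorphic over $Z$ to the concrete one is a harmless refinement that the paper leaves implicit.
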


\begin{proof}
Let $z\in Z$.
Set $y:=u(z)\in Y$.
Since $e$ is surjective, choose $x\in X$ with $e(x)=y$.
Then $(z,x)\in Z\times_Y X$ and $\pi(z,x)=z$.
Thus $\pi$ is surjective.
By Corollary~\ref{cor:repi-iff-surj}, $\pi$ is a regular epimorphism.
\end{proof}

\subsection{Regularity of $\mathsf{SMod}_{T,\Gamma}$}\label{subsec:regularity}
We now record regularity in the standard sense.

\begin{theorem}[Regularity]\label{thm:regular}
The category $\mathsf{SMod}_{T,\Gamma}$ is regular.
\end{theorem}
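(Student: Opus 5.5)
To prove Theorem~\ref{thm:regular} I will verify the three standard axioms of a regular category: (i) $\mathsf{SMod}_{T,\Gamma}$ has all finite limits; (ii) every kernel pair has a coequalizer; and (iii) regular epimorphisms are stable under pullback. In our setting (ii) may be replaced by the existence of (regular epi, mono)-factorizations, which is equivalent in the presence of (i) and (iii).

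For (i), finite limits come from a terminal object, binary products and equalizers, or equivalently from a terminal object and pullbacks. The zero object of Proposition~\ref{prop:zero-object} is terminal. Binary products are given by Proposition~\ref{prop:product} and equalizers by Proposition~\ref{prop:equalizer}, and pullbacks are also available directly from Proposition~\ref{prop:pullback}. This step is routine.

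For (ii), I will give the coequalizer of a kernel pair explicitly and not merely quote Proposition~\ref{prop:coequalizers-exist}. Let $f:M\to N$ have kernel pair $p_1,p_2:M\times_N M\rightrightarrows M$. By Lemma~\ref{lem:kpair-congruence} its underlying relation $R_f$ is already a congruence. The congruence generated by the pairs $(p_1(r),p_2(r))$ is therefore $R_f$ itself, and the coequalizer is $M\to M/R_f$. By Proposition~\ref{prop:univ-factor} and Theorem~\ref{thm:first-iso}, this quotient is isomorphic to $\mathrm{Im}(f)$ via $e_f$. Together with Proposition~\ref{prop:image-factorization}, this shows that every morphism factors as the coequalizer of its kernel pair followed by a monomorphism. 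That is the (regular epi, mono)-factorization, which I will record even though existence of the coequalizer alone suffices.

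For (iii), Proposition~\ref{prop:pullback-surj} states that surjections pull back to surjections. Corollary~\ref{cor:repi-iff-surj} identifies surjections with regular epimorphisms, so pullback-stability of regular epimorphisms follows immediately.

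The only point needing care is the identification of regular epis with surjections, since everything else rests on it. That identification was established in Corollary~\ref{cor:repi-iff-surj}, so I expect no genuine obstacle. The one subtlety is (iii): the pullback in Proposition~\ref{prop:pullback-surj} is the concrete pullback, and any other pullback is isomorphic to it, with isomorphisms preserving surjectivity. The theorem therefore follows by assembling these earlier results.
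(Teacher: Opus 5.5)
Your proposal is correct and follows essentially the same route as the paper. Both arguments take finite limits from Section~\ref{sec:cat}, obtain the surjection--inclusion factorization from Proposition~\ref{prop:image-factorization} together with Corollary~\ref{cor:repi-iff-surj}, and get pullback-stability from Proposition~\ref{prop:pullback-surj}. Your extra step, identifying the coequalizer of the kernel pair as $M\to M/R_f\cong\mathrm{Im}(f)$, also covers the kernel-pair form of the definition of regularity, which the paper leaves implicit by using the factorization form.
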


\begin{proof}
Finite limits exist by Section~\ref{sec:cat} (products, equalizers, pullbacks).
Every morphism factors as a surjection onto its set-image followed by the inclusion
(Proposition~\ref{prop:image-factorization}).
The first map is a regular epimorphism by Corollary~\ref{cor:repi-iff-surj}.
The second map is a monomorphism.
Pullback-stability of regular epimorphisms holds by Proposition~\ref{prop:pullback-surj}.
This is exactly the definition of regularity.
\end{proof}

\subsection{Internal equivalence relations are congruences}\label{subsec:int-eqrel}
To pass from regular to Barr-exact we identify internal equivalence relations.

\begin{lemma}\label{lem:int-eqrel-iff-cong}
Let $M\in\mathsf{SMod}_{T,\Gamma}$.
An internal equivalence relation $R\hookrightarrow M\times M$ is the same thing as a semimodule congruence on $M$.
\end{lemma}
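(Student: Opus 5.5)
The plan is to prove the two directions separately, making precise what ``the same thing'' means: a bijection between congruences $\equiv$ on $M$ and subobjects $R\hookrightarrow M\times M$ (monomorphisms, taken up to isomorphism) that are internally reflexive, symmetric and transitive. First we reduce monomorphisms to injections. If $r:R\to M\times M$ is a monomorphism, test it against maps out of the free object on one generator; alternatively, note that the kernel pair of $r$ is the diagonal, and by Lemma~\ref{lem:kpair-congruence} its underlying relation is $\{(x,y)\mid r(x)=r(y)\}$, so $r$ is injective. Hence every subobject of $M\times M$ is, up to isomorphism, a subsemimodule $R\subseteq M\times M$ with the induced structure (the inclusion, as in Proposition~\ref{prop:pullback}).

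\textbf{From internal relations to congruences.} Let $R\subseteq M\times M$ be a subsemimodule. We translate each internal axiom into its elementwise form. Internal reflexivity says the diagonal $\Delta:M\to M\times M$ factors through $R$, i.e.\ $(x,x)\in R$ for all $x$. Internal symmetry says the twist $\tau(x,y)=(y,x)$ restricts to $R\to R$. For internal transitivity, recall that the pullback $R\times_M R$ is computed as in Proposition~\ref{prop:pullback} as the set of pairs $((x,y),(y,z))$; the condition is that the map sending such a pair to $(x,z)$ lands in $R$. So $R$ is an equivalence relation on the underlying set.

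It remains to check compatibility with the operations. Because $R$ is a subsemimodule of $M\times M$ with componentwise operations (Proposition~\ref{prop:product}), $(x,y)\in R$ gives $(x,y)+(z,z)=(x+z,y+z)\in R$, using $(z,z)\in R$ by reflexivity. It also gives $a\alpha(x,y)\beta b=(a\alpha x\beta b,\,a\alpha y\beta b)\in R$. These are exactly the two clauses of Definition~\ref{def:semimod-cong}, with the action stated for the full $(a,\alpha,\cdot,\beta,b)$ datum.

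\textbf{From congruences to internal relations.} Conversely, let $\equiv$ be a congruence. The set $R=\{(x,y)\mid x\equiv y\}$ contains $(0,0)$. It is closed under addition: from $x\equiv y$ and $x'\equiv y'$ we get $x+x'\equiv y+x'\equiv y+y'$, using translation twice together with transitivity and commutativity. It is closed under every $\lambda_{a,\alpha,\beta,b}$ by the compatibility clause. So $R$ is a subsemimodule, and the elementwise equivalence axioms give the required factorizations through $R$. These factorizations are morphisms automatically: each is a restriction of a morphism ($\Delta$, $\tau$, or the outer projection $R\times_M R\to M\times M$) to subsemimodules.

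The two constructions are mutually inverse on the level of underlying relations, which proves the lemma. The only genuinely non-formal point is the step where closure under addition of \emph{pairs} (subsemimodule) must be matched with translation-invariance (congruence). In one direction this uses reflexivity to produce $(z,z)$; in the other it uses transitivity to combine two translations. I expect this step, together with the injectivity of monomorphisms, to be the main thing to state carefully. Everything else is unwinding definitions.
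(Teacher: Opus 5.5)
Your proposal is correct and follows the same two-direction argument as the paper: a subobject of $M\times M$ is a subsemimodule and hence a stable relation, and conversely a congruence $\equiv$ gives a subsemimodule $R_\equiv$ carrying the equivalence structure. Your version also makes explicit three points the paper's proof passes over: monomorphisms are injective, reflexivity is what supplies $(z,z)\in R$ for translation-invariance, and closure of $R_\equiv$ under addition of pairs needs transitivity to combine two translations.
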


\begin{proof}
Let $R\hookrightarrow M\times M$ be an internal equivalence relation.
Since $R$ is a subobject of $M\times M$, it is a subsemimodule.
Hence, if $(x,y)\in R$, then for every $z\in M$ one has $(x+z,y+z)\in R$, and for every $a,\alpha,\beta,b$
one has $(a\alpha x\beta b,\,a\alpha y\beta b)\in R$.
Thus the underlying relation is a congruence.

Conversely, let $\equiv$ be a congruence on $M$.
Consider $R_\equiv:=\{(x,y)\in M\times M\mid x\equiv y\}$.
Congruence stability makes $R_\equiv$ a subsemimodule of $M\times M$.
Reflexivity, symmetry, and transitivity give an internal equivalence relation structure.
\end{proof}

\subsection{Effectiveness of congruences}\label{subsec:effective}
Barr-exactness requires that each internal equivalence relation is a kernel pair of its coequalizer.

\begin{proposition}\label{prop:cong-effective}
Let $\equiv$ be a congruence on $M$ and let $q:M\to M/{\equiv}$ be the quotient map.
Then the kernel pair of $q$ is exactly the relation $\equiv$.
Equivalently, $\equiv$ is effective.
\end{proposition}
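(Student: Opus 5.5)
The plan is to compute the kernel pair of $q$ concretely and compare it with $\equiv$ elementwise. First, $q$ is a morphism in $\mathsf{SMod}_{T,\Gamma}$. The operations on $M/{\equiv}$ are defined by $[x]+[y]=[x+y]$ and $a\alpha[x]\beta b=[a\alpha x\beta b]$; these are well defined precisely because $\equiv$ is stable under addition and under every operator $m\mapsto a\alpha m\beta b$ (Definition~\ref{def:semimod-cong}). By Lemma~\ref{lem:kpair-congruence}, the kernel pair of $q$ is the subsemimodule
\[
R_q=\{(x,y)\in M\times M\mid q(x)=q(y)\}\subseteq M\times M,
\]
with the two restricted projections (Proposition~\ref{prop:pullback} and Definition~\ref{def:kernel-pair}). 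The main step is then the set-theoretic identity $R_q=R_\equiv$, where $R_\equiv=\{(x,y)\mid x\equiv y\}$ is the subsemimodule attached to $\equiv$ in Lemma~\ref{lem:int-eqrel-iff-cong}.

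For the identity, recall that $q(x)=[x]$ is the equivalence class of $x$. Two classes of an equivalence relation coincide exactly when their representatives are related, so $q(x)=q(y)$ holds if and only if $x\equiv y$. This gives equality as subsets of $M\times M$. Both sides carry the substructure induced from $M\times M$, so they are equal as subsemimodules. Their projections to $M$ are the same restricted projections, so they are equal as subobjects, and hence as internal equivalence relations under Lemma~\ref{lem:int-eqrel-iff-cong}.

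For the ``equivalently'' clause, effectiveness means that the relation is the kernel pair of some morphism. It is customary also to check that this morphism is its coequalizer. We will note that $q$ coequalizes the two projections $p_1,p_2:R_\equiv\rightrightarrows M$. The congruence generated by the pairs $\{(x,y)\mid x\equiv y\}$ is $\equiv$ itself, so Proposition~\ref{prop:coeq} identifies $q$ as the coequalizer of $p_1,p_2$. Thus $\equiv$ is the kernel pair of its own coequalizer, which is the Barr-effectiveness condition.

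The only point needing care is not a real obstacle. It is to make sure the identification is an equality of subobjects of $M\times M$ with their projections, not merely of underlying relations. This follows because subobjects in this concrete category are determined by their underlying subsets: monomorphisms are injective, and the pullback is computed on underlying sets.
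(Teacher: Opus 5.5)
Your proposal is correct and follows the paper's argument. The paper's proof is exactly the observation that $q(x)=q(y)$ iff $[x]=[y]$ iff $x\equiv y$, so the concretely computed pullback $M\times_{M/{\equiv}}M$ equals $\{(x,y)\mid x\equiv y\}$; your extra checks (equality as subobjects with their projections, and $q$ being the coequalizer of $p_1,p_2$ via Proposition~\ref{prop:coeq}) go beyond what the paper writes but are correct and harmless.
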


\begin{proof}
By definition of $q$, one has $q(x)=q(y)$ if and only if $[x]=[y]$ in $M/{\equiv}$,
which holds if and only if $x\equiv y$.
Thus the pullback
\[
M\times_{M/{\equiv}} M=\{(x,y)\in M\times M\mid q(x)=q(y)\}
\]
is precisely $\{(x,y)\mid x\equiv y\}$.
Hence the kernel pair relation equals $\equiv$.
\end{proof}

\begin{theorem}[Barr-exactness]\label{thm:barrexact}
The category $\mathsf{SMod}_{T,\Gamma}$ is Barr-exact.
\end{theorem}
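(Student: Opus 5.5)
The plan is to verify the standard definition directly. A category is Barr-exact if it is regular and every internal equivalence relation is effective, i.e.\ is the kernel pair of some morphism (equivalently, of its coequalizer). Regularity is already Theorem~\ref{thm:regular}. So the work reduces to the effectiveness clause, which I would assemble from Lemma~\ref{lem:int-eqrel-iff-cong} and Proposition~\ref{prop:cong-effective}.

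\textbf{Step 1.} Let $R\rightarrowtail M\times M$ be an internal equivalence relation, with legs $r_1,r_2:R\rightrightarrows M$. Since $(r_1,r_2)$ is jointly monic, I replace $R$ up to isomorphism of subobjects by its image in $M\times M$. By Lemma~\ref{lem:int-eqrel-iff-cong}, this image is the graph of a semimodule congruence $\equiv$ on $M$.

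\textbf{Step 2.} I identify the coequalizer of $r_1,r_2$. By Proposition~\ref{prop:coeq}, it is $N/{\equiv_{r_1,r_2}}$ with $N=M$, and the generating pairs are exactly $\{(x,y)\mid x\equiv y\}$. Since $\equiv$ is already a congruence, the generated congruence is $\equiv$ itself. Hence the coequalizer is the quotient map $q:M\to M/{\equiv}$.

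\textbf{Step 3.} By Proposition~\ref{prop:cong-effective}, the kernel pair of $q$, realized concretely via Proposition~\ref{prop:pullback}, has underlying relation $\equiv$. As a subobject of $M\times M$ it therefore coincides with $R$. So $R$ is effective.

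Regularity together with effectiveness of all internal equivalence relations gives Barr-exactness.

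The only delicate point is the bookkeeping in Step 1. An internal equivalence relation is a priori an object $R$ with a jointly monic pair, not literally a subset of $M\times M$. I must note that monomorphisms in $\mathsf{SMod}_{T,\Gamma}$ are injective. This follows because the kernel pair of a mono is the diagonal, and by Lemma~\ref{lem:kpair-congruence} that forces $f(x)=f(y)\Rightarrow x=y$. Injectivity lets $R$ be replaced by its image subsemimodule. Everything else is a direct citation.
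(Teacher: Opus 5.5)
Your proposal is correct and follows the same route as the paper: regularity from Theorem~\ref{thm:regular}, internal equivalence relations identified with congruences via Lemma~\ref{lem:int-eqrel-iff-cong}, and effectiveness via Proposition~\ref{prop:cong-effective}. Your extra bookkeeping usefully makes explicit a point the paper's Lemma~\ref{lem:int-eqrel-iff-cong} tacitly assumes when it treats a subobject of $M\times M$ as a subsemimodule: monomorphisms are injective, so a jointly monic pair can be replaced by its image, and the coequalizer of the legs is $q:M\to M/{\equiv}$ by Proposition~\ref{prop:coeq}.
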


\begin{proof}
By Theorem~\ref{thm:regular}, the category is regular.
By Lemma~\ref{lem:int-eqrel-iff-cong}, internal equivalence relations are congruences.
By Proposition~\ref{prop:cong-effective}, every congruence is effective.
This is the definition of Barr-exactness.
\end{proof}

\begin{remark}\label{rem:ideal-exact-context}
The exactness used here is the standard regular/Barr-exact notion for algebraic categories.
For refinements and comparisons (ideal exactness, ideal regularity, co-exactness), see
\cite{Janelidze2024,MantovaniMessora2025,Gray2025}.
\end{remark}

\subsection{Regular images and exactness}\label{subsec:exactness}
In a regular category, the appropriate image is the \emph{regular image}.

\begin{definition}[Regular image]\label{def:reg-image}
Let $f:M\to N$.
Write $M\xrightarrow{e_f} \mathrm{Im}(f)\xrightarrow{m_f} N$ for the factorization of
Proposition~\ref{prop:image-factorization}.
We call $m_f:\mathrm{Im}(f)\hookrightarrow N$ the \emph{regular image} of $f$.
\end{definition}

\begin{definition}[Exactness at an object]\label{def:reg-exact-at}
A composable pair $M\xrightarrow{f}N\xrightarrow{g}P$ is \emph{exact at $N$} if:
\begin{enumerate}[label=\textnormal{(X\arabic*)}]
\item $g\circ f=0$;
\item the regular image of $f$ equals the kernel of $g$ as subobjects of $N$, i.e.
\[
\mathrm{Im}(f)=\ker(g)\ \subseteq N.
\]
\end{enumerate}
\end{definition}

\begin{remark}\label{rem:no-subtractive-here}
We do not use subtractive images to define exactness.
Subtractive closure can be recorded as a comparison invariant, but this route  fixes exactness
through regular images and kernel pairs.
\end{remark}

\subsection{Short exact sequences}\label{subsec:short-exact}
The pointed structure is given by the zero object (Proposition~\ref{prop:zero-object}).

\begin{definition}[Short exact sequence]\label{def:short-exact-regular}
A sequence
\[
0 \longrightarrow A \xrightarrow{i} B \xrightarrow{p} C \longrightarrow 0
\]
is \emph{short exact} (in the regular/Barr-exact sense) if:
\begin{enumerate}[label=\textnormal{(S\arabic*)}]
\item $i$ is the kernel of $p$ (Definition~\ref{def:ker-coker});
\item $p$ is a regular epimorphism (equivalently, surjective).
\end{enumerate}
\end{definition}

\begin{lemma}\label{lem:pullback-short-exact}
Short exact sequences are stable under pullback along arbitrary morphisms.
Precisely, if $0\to A\to B\xrightarrow{p} C\to 0$ is short exact and $u:C'\to C$ is any morphism,
then the induced sequence
\[
0\to A'\to B'\xrightarrow{p'} C'\to 0
\]
obtained by pulling back $p$ along $u$ is short exact, where $B':=B\times_C C'$ and $A'$ is the kernel of $p'$.
\end{lemma}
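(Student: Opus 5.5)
We verify the two conditions (S1) and (S2) of Definition~\ref{def:short-exact-regular} for the pulled-back sequence, working concretely with the description of pullbacks in Proposition~\ref{prop:pullback}. Write
\[
B'=B\times_C C'=\{(b,c')\in B\times C'\mid p(b)=u(c')\},
\]
with projections $p':B'\to C'$, $(b,c')\mapsto c'$, and $v:B'\to B$, $(b,c')\mapsto b$.

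Condition (S2) is immediate. Since $p$ is a regular epimorphism, it is surjective by Corollary~\ref{cor:repi-iff-surj}. Proposition~\ref{prop:pullback-surj} then shows that the pulled-back projection $p'$ is surjective. Applying Corollary~\ref{cor:repi-iff-surj} again, $p'$ is a regular epimorphism.

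For condition (S1), the statement defines $A'$ as the kernel of $p'$, so (S1) holds by definition via Proposition~\ref{prop:kernel-concrete}. To make the sequence genuinely comparable to the original one, we also identify $A'$. By Proposition~\ref{prop:kernel-concrete},
\[
A'=\{(b,0)\in B'\}=\{(b,0)\mid p(b)=u(0)=0\}=\{(b,0)\mid b\in\ker p\}.
\]
Because $i$ is the kernel of $p$, the set $\ker p$ equals $i(A)$. Hence the map $a\mapsto (i(a),0)$ is a bijective morphism $A\to A'$, and its inverse is also a morphism. So $A'\cong A$, compatibly with $v$ and $i$.

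The proof has no genuine obstacle. The only point needing care is that the kernel of $p'$ is computed against the zero element $0\in C'$, and this uses $u(0)=0$; it holds because $u$ is a monoid morphism. Everything else reduces to earlier results: surjectivity of pulled-back surjections (Proposition~\ref{prop:pullback-surj}) and the fact that regular epimorphisms are exactly the surjections (Corollary~\ref{cor:repi-iff-surj}).
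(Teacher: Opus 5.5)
Your proof is correct and follows the paper's route: (S2) comes from Proposition~\ref{prop:pullback-surj} together with Corollary~\ref{cor:repi-iff-surj}, and (S1) holds essentially by the definition of $A'$ as the kernel of $p'$. Your explicit computation $A'=\{(b,0)\mid b\in\ker p\}\cong A$ is the concrete content behind the paper's appeal to ``stability of equalizers under pullback.'' It is also the more precise formulation: what is true is that the canonical map $A'\to A$ is an isomorphism, whereas literally pulling $i$ back along $B'\to B$ would give $A\times\ker u$, which is the kernel of $u\circ p'$ rather than of $p'$.
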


\begin{proof}
By Proposition~\ref{prop:pullback-surj}, the pullback $p':B'\to C'$ is a regular epimorphism.
Kernels are equalizers against the zero morphism.
Equalizers are stable under pullback, hence the kernel of $p'$ is the pullback of the kernel of $p$.
Thus the induced left map is a kernel of $p'$.
\end{proof}

\subsection{A classical diagram lemma that fails}\label{subsec:five-fails}
Barr-exactness is not abelian exactness.
A classical inference may fail even when both rows are short exact.

\begin{proposition}[Short Five Lemma fails]\label{prop:short-five-fails}
There exists a commutative diagram in $\mathsf{SMod}_{T,\Gamma}$ with short exact rows
\[
\begin{array}{ccccccccc}
0 &\to& A &\xrightarrow{i}& B &\xrightarrow{p}& C &\to& 0\\
  && \downarrow^{\alpha} && \downarrow^{\beta} && \downarrow^{\gamma} && \\
0 &\to& A &\xrightarrow{i}& B &\xrightarrow{p}& C &\to& 0,
\end{array}
\]
such that $\alpha$ and $\gamma$ are isomorphisms but $\beta$ is not an isomorphism.
\end{proposition}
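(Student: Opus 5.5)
The plan is to build the counterexample from plain commutative monoids equipped with the zero action. The point is that in a non-additive setting a surjection can have trivial kernel without being injective. The Short Five Lemma then fails already when $A=0$.

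\emph{Step 1: monoids with zero action are objects.} Let $T$ be any ternary $\Gamma$--semiring, and let $(M,+,0)$ be any commutative monoid. Define $a\alpha m\beta b:=0$ for all $a,b\in T$, $\alpha,\beta\in\Gamma$, $m\in M$. Axioms (M1)--(M3) hold trivially, since every term involved is $0$. For (M4), both sides equal $0$. So $M$ is a ternary $\Gamma$--semimodule. Moreover, every monoid morphism between two such objects is a morphism in the sense of Definition~\ref{def:morphism}, because $f(0)=0$.

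\emph{Step 2: the row.} Take $B=\{0,1,2\}$ with $x\oplus y=\min(2,x+y)$, as in Example~\ref{ex:image-not-subtractive}. Take $C=\{0,1\}$ with $1+1=1$. Define $p:B\to C$ by $p(0)=0$ and $p(1)=p(2)=1$. One checks that $p$ is additive; the only nontrivial cases are $p(1\oplus 1)=p(2)=1=1+1$ and $p(1\oplus2)=1$. The map $p$ is surjective, so it is a regular epimorphism by Corollary~\ref{cor:repi-iff-surj}. By Proposition~\ref{prop:kernel-concrete}, $\ker(p)=\{0\}$. Hence, with $A=0$ and $i$ the zero inclusion, the sequence $0\to A\xrightarrow{i}B\xrightarrow{p}C\to0$ is short exact in the sense of Definition~\ref{def:short-exact-regular}. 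We use the same sequence for both rows.

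\emph{Step 3: the vertical maps.} Let $\alpha=\mathrm{id}_A$ and $\gamma=\mathrm{id}_C$. Define $\beta:B\to B$ by $\beta(0)=0$ and $\beta(1)=\beta(2)=2$.
\begin{itemize}
\item $\beta$ is a monoid morphism, since every sum with a nonzero summand lands at $2$. For example, $\beta(1\oplus1)=2=2\oplus2$.
\item The diagram commutes. We have $p\beta=p=\gamma p$, because $\beta$ preserves the fibres of $p$. The left square commutes trivially, since $A=0$.
\item $\beta$ is not injective, so it is not an isomorphism.
\end{itemize}

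The only real point to get right is Step 1: one must make sure that a legitimate object structure exists regardless of $T$. The zero action settles this. Everything else is a finite check. The underlying reason for the failure is that $\ker(p)=0$ does not force $p$ to be injective, which is exactly the non-subtractive phenomenon of Remark~\ref{rem:no-cosets}.
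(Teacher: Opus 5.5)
Your proof is correct and follows the same strategy as the paper. Both give commutative monoids the zero action, use one short exact row twice with identities on the ends, and let $\beta$ collapse two elements of a fibre of $p$. The only difference is the instance. The paper uses a five-element join-semilattice with nontrivial kernel $A=\{0,k\}$. Your three-element monoid with $A=0$ is a smaller example that isolates the underlying point: $\ker(p)=0$ does not force $p$ to be injective.
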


\begin{proof}
Work first in commutative monoids, then view them as ternary $\Gamma$--semimodules via the zero action
$a\alpha m\beta b:=0$ for all scalars and all $m$.

Let $B=\{0,k,u,v,w\}$ be the finite join-semilattice with relations
$0<k<u<w$ and $0<k<v<w$, and $u$ and $v$ incomparable.
Write addition as join, so $u+v=w$ and $k+u=u$, $k+v=v$.
Let $C=\{0,1\}$ with join addition.
Define $p:B\to C$ by
\[
p(0)=0,\quad p(k)=0,\quad p(u)=p(v)=p(w)=1.
\]
Then $p$ is surjective, hence a regular epimorphism.

Let $A=\{0,k\}$ with the induced monoid structure, and let $i:A\hookrightarrow B$ be inclusion.
We claim that $i$ is the kernel of $p$.
Indeed,
\[
\ker(p)=\{b\in B\mid p(b)=0\}=\{0,k\}=i(A),
\]
and the inclusion is the equalizer of $p$ and $0$.
Hence the row $0\to A\xrightarrow{i}B\xrightarrow{p}C\to 0$ is short exact
in the sense of Definition~\ref{def:short-exact-regular}.

Define $\beta:B\to B$ by
\[
\beta(0)=0,\quad \beta(k)=k,\quad \beta(u)=w,\quad \beta(v)=w,\quad \beta(w)=w.
\]
This preserves join, hence is a monoid homomorphism.
It satisfies $\beta\circ i=i$.
It also satisfies $p\circ\beta=p$ because $\beta$ maps $\{u,v,w\}$ to $w$ and all map to $1$.

Let $\alpha=\mathrm{id}_A$ and $\gamma=\mathrm{id}_C$.
Then the diagram commutes and $\alpha,\gamma$ are isomorphisms.
However $\beta$ is not injective since $\beta(u)=\beta(v)=w$ with $u\neq v$.
Therefore $\beta$ is not an isomorphism.
\end{proof}

\begin{remark}\label{rem:diagram-lemma-comment}
The failure in Proposition~\ref{prop:short-five-fails} is typical outside protomodular or abelian contexts.
Ideal exactness and related refinements explain which additional axioms restore such lemmas
\cite{Janelidze2024,MantovaniMessora2025,Gray2025}.
Factorization-system viewpoints provide a parallel structural language \cite{Stepan2023,Juran2025}.
\end{remark}

%% file: sections/09-projectives-flatness.tex
% ============================================================
% File: sections/09-projectives-flatness.tex
% Route (A) consistency: projectives defined w.r.t. regular epis (surjections)
% ============================================================

\section{Projectives, free semimodules, and a cautious note on flatness}\label{sec:proj}

This section serves two purposes.
First, it constructs free ternary $\Gamma$--semimodules and proves their universal property.
Second, it defines projective objects by lifting against regular epimorphisms and proves that
$\mathsf{SMod}_{T,\Gamma}$ has enough projectives.

\subsection{Projective objects relative to regular epimorphisms}\label{subsec:proj-def}
Regular epimorphisms are surjections in $\mathsf{SMod}_{T,\Gamma}$ (Corollary~\ref{cor:repi-iff-surj}).
Projectivity must therefore be stated as a lifting property against surjections.

\begin{definition}[Projective semimodule]\label{def:projective}
An object $P\in\mathsf{SMod}_{T,\Gamma}$ is \emph{projective} if for every regular epimorphism
$e:X\to Y$ and every morphism $f:P\to Y$ there exists a morphism $\tilde f:P\to X$ such that
$e\circ \tilde f=f$.
\end{definition}

\begin{remark}\label{rem:proj-not-additive}
In an abelian category one often proves projectivity by splitting short exact sequences.
That method uses additive inverses and is not available here.
We instead use the free--object adjunction, which is purely congruence-driven and does not mention cosets.
This is compatible with the semiring-semimodule literature on $e$-projectives and related notions
\cite{AbuhlailNoegraha2021,Abuhlail2022,BorgerJun2025}.
\end{remark}

\subsection{Free ternary $\Gamma$--semimodules}\label{subsec:free}
We construct free objects by terms.
This construction is explicit and does not rely on subtraction.

\subsubsection*{The term semimodule}
Fix a set $S$.
Define the set $\mathsf{Term}_S$ of \emph{semimodule terms over $S$} inductively as follows:
\begin{enumerate}[label=\textnormal{(T\arabic*)}]
\item For each $s\in S$, there is a term $\mathbf{s}\in \mathsf{Term}_S$.
\item There is a constant term $\mathbf{0}\in \mathsf{Term}_S$.
\item If $t,u\in\mathsf{Term}_S$, then $(t\oplus u)\in\mathsf{Term}_S$.
\item If $t\in\mathsf{Term}_S$ and $(a,\alpha,\beta,b)\in T\times\Gamma\times\Gamma\times T$, then
$(a\alpha\, t\,\beta b)\in\mathsf{Term}_S$.
\end{enumerate}
Intuitively, $\oplus$ represents the monoid addition and $a\alpha(-)\beta b$ represents the unary action operator.

Let $\equiv$ be the smallest congruence on $\mathsf{Term}_S$ generated by the identities that force:
\begin{itemize}
\item $(\mathsf{Term}_S,\oplus,\mathbf{0})$ to be a commutative monoid;
\item the action operators to satisfy the semimodule axioms of Definition~\ref{def:tgs-semimodule}
(additivity in each slot, zero stability, and compatibility with ternary multiplication).
\end{itemize}
Define
\[
F(S):=\mathsf{Term}_S/{\equiv}.
\]

\begin{lemma}\label{lem:free-well-defined}
The operations induced by $\oplus$ and by $t\mapsto a\alpha t\beta b$ on $F(S)$ are well-defined.
With these operations $F(S)$ is a ternary $\Gamma$--semimodule.
\end{lemma}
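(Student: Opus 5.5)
The plan is to make precise what ``the smallest congruence on $\mathsf{Term}_S$'' means, and then read off well-definedness and the axioms from that definition. I would take $\equiv$ to be the smallest equivalence relation $\equiv$ on $\mathsf{Term}_S$ with two properties. First, $\equiv$ is closed under the term-forming operations: if $t\equiv t'$ and $u\equiv u'$, then $(t\oplus u)\equiv(t'\oplus u')$ and $(a\alpha\,t\,\beta b)\equiv(a\alpha\,t'\,\beta b)$ for every $(a,\alpha,\beta,b)$. Second, $\equiv$ contains every substitution instance of the defining identities. For the monoid these are associativity and commutativity of $\oplus$ and the unit law for $\mathbf{0}$. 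For the action they are the instances of (M1)--(M4) in Definition~\ref{def:tgs-semimodule}, with the $T$- and $\Gamma$-sums read inside $T$ and $\Gamma$. Such a smallest relation exists because the intersection of any family of relations with both properties again has both properties, and $\mathsf{Term}_S\times\mathsf{Term}_S$ is one such relation.

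Well-definedness is then the first property and nothing more. If $[t]=[t']$ and $[u]=[u']$, compatibility gives $[t\oplus u]=[t'\oplus u']$ and $[a\alpha t\beta b]=[a\alpha t'\beta b]$. So $[t]+[u]:=[t\oplus u]$ and $a\alpha[t]\beta b:=[a\alpha t\beta b]$ are independent of the chosen representatives. The key point is that the congruence is taken with respect to the unary operators $\lambda_{a,\alpha,\beta,b}$ for every tuple $(a,\alpha,\beta,b)$, as in Section~\ref{sec:cat}, and not with respect to one derived unary action.

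For the axioms, each required equation in $F(S)$ has the form $[\ell]=[r]$, where $\ell,r$ are terms obtained by substituting terms for the variables of a defining identity. That pair lies in $\equiv$ by the second property. For example, $(a+a')\alpha[t]\beta b=[(a+a')\alpha t\beta b]$ and $a\alpha[t]\beta b+a'\alpha[t]\beta b=[(a\alpha t\beta b)\oplus(a'\alpha t\beta b)]$, and these two terms are identified by an (M1) generator. Commutative monoid structure, additivity in each slot, and zero stability follow in the same way, so $F(S)$ is a ternary $\Gamma$--semimodule.

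The main obstacle is (M4). Its right-hand side $a\alpha(b\beta m\gamma c)\delta d$ nests one action operator inside another. Its left-hand side $(a\alpha b\beta c)\gamma m\delta d$ uses a single operator whose left scalar is the element $a\alpha b\beta c\in T$. I would check that both sides are legitimate terms of the form required by clause (T4) of the term grammar, with scalars computed in $T$ and operators nested, so that the generator is a pair of well-formed terms. The equation in $F(S)$ is then a single generator applied after choosing representatives. This works because operations on classes are computed on representatives, which is exactly what well-definedness guarantees. No further coherence check is needed, since every identity of Definition~\ref{def:tgs-semimodule} was included among the generators.
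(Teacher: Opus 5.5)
Your proposal is correct and follows the same route as the paper. Well-definedness comes from $\equiv$ being compatible with $\oplus$ and with every operator $t\mapsto a\alpha t\beta b$, and the semimodule axioms hold in $F(S)$ because their instances are among the generating pairs of $\equiv$. You make explicit what the paper's short proof leaves implicit: that the smallest such relation exists as an intersection, and that each axiom reduces to a generator pair once operations are computed on representatives.
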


\begin{proof}
By definition, $\equiv$ is a congruence stable under $\oplus$ and under every operator $t\mapsto a\alpha t\beta b$.
Hence the operations descend to the quotient set.
The imposed identities are exactly the axioms required in Definition~\ref{def:tgs-semimodule}.
\end{proof}

\subsubsection*{Universal property}
Let $U:\mathsf{SMod}_{T,\Gamma}\to\mathsf{Set}$ be the forgetful functor.

\begin{theorem}[Free--forgetful adjunction]\label{thm:free-universal}
For every set $S$ and every semimodule $M$, every function $\varphi:S\to U(M)$ extends uniquely to a morphism
$\bar\varphi:F(S)\to M$ such that $\bar\varphi([\mathbf{s}])=\varphi(s)$ for all $s\in S$.
Equivalently, $F$ is left adjoint to $U$.
\end{theorem}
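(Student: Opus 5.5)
The plan is to build $\bar\varphi$ in two stages: first on raw terms by structural recursion, then descend to $F(S)$ by checking that the defining congruence is contained in the kernel congruence of the recursively defined map.

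\textbf{Step 1: a set map on terms.} Define $\hat\varphi:\mathsf{Term}_S\to U(M)$ by recursion on the inductive clauses (T1)--(T4):
\[
\hat\varphi(\mathbf{s})=\varphi(s),\qquad \hat\varphi(\mathbf{0})=0,\qquad \hat\varphi(t\oplus u)=\hat\varphi(t)+\hat\varphi(u),\qquad \hat\varphi(a\alpha\,t\,\beta b)=a\alpha\,\hat\varphi(t)\,\beta b.
\]
This is well defined because terms are freely generated, so each term has a unique formation history. By construction $\hat\varphi$ preserves $\oplus$, $\mathbf{0}$ and every operator $t\mapsto a\alpha t\beta b$.

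\textbf{Step 2: descent to the quotient.} Let $\equiv_{\hat\varphi}$ be the kernel relation $t\equiv_{\hat\varphi}u\iff\hat\varphi(t)=\hat\varphi(u)$. It is an equivalence relation. It is stable under $\oplus$ and under every $a\alpha(-)\beta b$, by the argument of Lemma~\ref{lem:kercong} and the operator-preservation in Step 1; hence it is a congruence on the term algebra.

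Next, $\equiv_{\hat\varphi}$ contains every generating pair of $\equiv$. The generating pairs are substitution instances of the commutative monoid laws and of axioms (M1)--(M4). For each such pair, $\hat\varphi$ sends both sides to the corresponding expressions in $M$. These agree because $M$ satisfies the same laws: for example, (M4) in $M$ handles the pair $((a\alpha b\beta c)\gamma t\delta d,\ a\alpha(b\beta t\gamma c)\delta d)$.

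I expect this to be the main bookkeeping obstacle. The generating set must be taken to be \emph{all} substitution instances, with arbitrary terms in the $M$-slots and arbitrary scalars. One must also note that zero-stability instances (such as $(0\alpha t\beta b,\mathbf{0})$ and $(a\,0_\Gamma\,t\beta b,\mathbf{0})$) map to equal elements precisely by (M3).

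Since $\equiv$ is the smallest congruence containing these pairs, we get $\equiv\subseteq\equiv_{\hat\varphi}$. Hence $\bar\varphi([t]):=\hat\varphi(t)$ is well defined on $F(S)$. It is a morphism in $\mathsf{SMod}_{T,\Gamma}$ because the operations on $F(S)$ are induced from terms (Lemma~\ref{lem:free-well-defined}). It satisfies $\bar\varphi([\mathbf{s}])=\varphi(s)$.

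\textbf{Step 3: uniqueness and adjunction.} Suppose $\psi:F(S)\to M$ is any morphism with $\psi([\mathbf{s}])=\varphi(s)$. Show $\psi([t])=\bar\varphi([t])$ by induction on $t$. The base cases are $\mathbf{s}$ and $\mathbf{0}$ (using $\psi(0)=0$). The inductive steps for $\oplus$ and $a\alpha(-)\beta b$ use that both maps preserve these operations. Since every element of $F(S)$ is a class $[t]$, we get $\psi=\bar\varphi$.

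The bijection $\Hom(F(S),M)\cong\mathsf{Set}(S,U(M))$, given by $\psi\mapsto\psi\circ\eta_S$ with $\eta_S(s)=[\mathbf{s}]$, follows from existence and uniqueness. Naturality in $M$ is immediate from composition. This gives $F\dashv U$ with unit $\eta$.
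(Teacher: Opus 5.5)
Your proposal is correct and follows the same route as the paper. You interpret terms recursively via $\hat\varphi$, observe that the generating identities of $\equiv$ hold in $M$ so that $\hat\varphi$ descends to $F(S)$, and obtain uniqueness because $F(S)$ is generated by the classes $[\mathbf{s}]$. You also make explicit two steps the paper compresses: the kernel relation of $\hat\varphi$ is a congruence on $\mathsf{Term}_S$ containing every generating instance, hence containing $\equiv$; and uniqueness is an induction on term formation.
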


\begin{proof}
Define an interpretation map $\hat\varphi:\mathsf{Term}_S\to U(M)$ by recursion:
\[
\hat\varphi(\mathbf{s})=\varphi(s),\qquad \hat\varphi(\mathbf{0})=0,
\]
\[
\hat\varphi(t\oplus u)=\hat\varphi(t)+\hat\varphi(u),\qquad
\hat\varphi(a\alpha\, t\,\beta b)=a\alpha \hat\varphi(t)\beta b.
\]
The defining identities used to generate $\equiv$ hold in $M$ by the axioms of a ternary $\Gamma$--semimodule.
Hence $\hat\varphi$ is constant on $\equiv$-classes.
Therefore it factors through a unique map $\bar\varphi:F(S)\to M$.
By construction, $\bar\varphi$ preserves $+$ and the action, so it is a morphism.
Uniqueness follows because $F(S)$ is generated by the classes $[\mathbf{s}]$ under $+$ and the action.
\end{proof}

\begin{remark}\label{rem:free-coset}
The classical ``free module'' construction over a ring uses formal linear combinations with integer coefficients.
Here the construction is by terms and congruence quotienting.
It uses only the equational structure of $\mathsf{SMod}_{T,\Gamma}$.
This is the correct congruence-first replacement.
For semiring module theory perspectives consistent with this approach, see \cite{BorgerJun2025,JunSzczesnyTolliver2022}.
\end{remark}

\subsection{Free objects are projective}\label{subsec:free-projective}
\begin{proposition}\label{prop:free-projective}
For every set $S$, the free semimodule $F(S)$ is projective.
\end{proposition}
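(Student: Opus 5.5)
We plan to prove Proposition~\ref{prop:free-projective} by the standard adjunction argument, combining the universal property of Theorem~\ref{thm:free-universal} with the identification of regular epimorphisms as surjections in Corollary~\ref{cor:repi-iff-surj}. No splitting, subtraction or coset argument is needed.

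Fix a regular epimorphism $e:X\to Y$ and a morphism $f:F(S)\to Y$. By Corollary~\ref{cor:repi-iff-surj}, $e$ is surjective on underlying sets. For each $s\in S$ we consider the element $f([\mathbf{s}])\in U(Y)$ and choose a preimage $x_s\in U(X)$ with $e(x_s)=f([\mathbf{s}])$. This uses the axiom of choice once, indexed by $S$. The choice defines a function $\varphi:S\to U(X)$, $s\mapsto x_s$.

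Theorem~\ref{thm:free-universal} extends $\varphi$ to a unique morphism $\tilde f:=\bar\varphi:F(S)\to X$ with $\tilde f([\mathbf{s}])=x_s$. We now compare the two morphisms $e\circ\tilde f$ and $f$ from $F(S)$ to $Y$. Both are morphisms in $\mathsf{SMod}_{T,\Gamma}$, and on generators they agree:
\[
(e\circ\tilde f)([\mathbf{s}])=e(x_s)=f([\mathbf{s}]).
\]
Hence both extend the same function $S\to U(Y)$, namely $s\mapsto f([\mathbf{s}])$. The uniqueness clause of Theorem~\ref{thm:free-universal}, applied with target $Y$, gives $e\circ\tilde f=f$. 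This is exactly the lifting property of Definition~\ref{def:projective}.

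The only point needing care is the uniqueness step: it requires that $F(S)$ be generated by the classes $[\mathbf{s}]$ under $+$, $0$ and the operators $a\alpha(-)\beta b$. This holds by the inductive definition of $\mathsf{Term}_S$, and Theorem~\ref{thm:free-universal} already records it. If one prefers to see it directly, an induction on term structure shows that two morphisms agreeing on all $[\mathbf{s}]$ agree on every class $[t]$. Apart from this, the argument is formal.
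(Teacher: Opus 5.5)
Your proposal is correct and follows the paper's proof essentially step for step. You choose preimages $x_s$ of $f([\mathbf{s}])$ under the surjection $e$, extend via Theorem~\ref{thm:free-universal}, and conclude $e\circ\tilde f=f$ because both morphisms agree on the generators $[\mathbf{s}]$. Your appeal to the uniqueness clause with target $Y$ is the same as the paper's appeal to $F(S)$ being generated by the classes $[\mathbf{s}]$.
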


\begin{proof}
Let $e:X\to Y$ be a regular epimorphism.
Thus $e$ is surjective on underlying sets.
Let $f:F(S)\to Y$ be a morphism.
Define a function $\psi:S\to U(X)$ as follows.
For each $s\in S$, choose $x_s\in X$ with $e(x_s)=f([\mathbf{s}])$.
This is possible because $e$ is surjective.

By Theorem~\ref{thm:free-universal}, $\psi$ extends uniquely to a morphism
$\tilde f:F(S)\to X$ with $\tilde f([\mathbf{s}])=x_s$.
Then for each $s\in S$,
\[
(e\circ \tilde f)([\mathbf{s}])=e(x_s)=f([\mathbf{s}]).
\]
Both $e\circ \tilde f$ and $f$ are morphisms, hence agree on all of $F(S)$ because $F(S)$ is generated by $[\mathbf{s}]$.
Thus $e\circ\tilde f=f$.
\end{proof}

\subsection{Enough projectives and projective presentations}\label{subsec:enough-proj}
We now prove that every object is the regular quotient of a projective.

\begin{definition}[Canonical free cover]\label{def:free-cover}
Let $M\in\mathsf{SMod}_{T,\Gamma}$ and set $S:=U(M)$.
Let $\eta_M:S\to U(M)$ be the identity function.
Write
\[
\varepsilon_M:F(U(M))\to M
\]
for the unique morphism extending $\eta_M$ via Theorem~\ref{thm:free-universal}.
\end{definition}

\begin{lemma}\label{lem:free-cover-surj}
The morphism $\varepsilon_M:F(U(M))\to M$ is surjective.
Hence it is a regular epimorphism.
\end{lemma}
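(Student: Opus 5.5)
The argument is essentially immediate from the universal property in Theorem~\ref{thm:free-universal}. First we unwind the definition of $\varepsilon_M$. It is the unique morphism $F(U(M))\to M$ whose value on each generator is $\varepsilon_M([\mathbf{m}])=\eta_M(m)=m$ for every $m\in U(M)$. Surjectivity then follows directly: given any $m\in M$, the element $[\mathbf{m}]\in F(U(M))$ is a preimage. Here $\mathbf{m}$ is the generator term attached to $m$ regarded as an element of the index set $S=U(M)$. No recursion over terms and no use of the congruence $\equiv$ is needed beyond the fact that $[\mathbf{m}]$ is a well-defined element of the quotient.

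For the second assertion, we invoke Corollary~\ref{cor:repi-iff-surj}, which says that surjective morphisms in $\mathsf{SMod}_{T,\Gamma}$ are exactly the regular epimorphisms. Alternatively, Proposition~\ref{prop:surj-repi} exhibits $\varepsilon_M$ explicitly as the coequalizer of the two projections from its kernel pair $R_{\varepsilon_M}$ (Lemma~\ref{lem:kpair-congruence}).

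There is no genuine obstacle. The one point to state carefully is notational: the set $S$ is $U(M)$ itself, so for each element $m$ there is a basis term $\mathbf{m}$, and the extension property gives the value $m$ on it. This relies only on the defining equation $\bar\varphi([\mathbf{s}])=\varphi(s)$ from Theorem~\ref{thm:free-universal}, applied with $\varphi=\eta_M$. Combined with Proposition~\ref{prop:free-projective}, the lemma then shows that every object is a regular quotient of a projective.
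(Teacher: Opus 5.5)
Your proposal is correct and follows the paper's own proof: evaluating $\varepsilon_M$ on the generator classes $[\mathbf{m}]$ gives $\varepsilon_M([\mathbf{m}])=m$, which proves surjectivity. Corollary~\ref{cor:repi-iff-surj} then makes $\varepsilon_M$ a regular epimorphism.
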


\begin{proof}
For each $m\in M$, we have $\varepsilon_M([\mathbf{m}])=m$ by construction.
Thus every element of $M$ lies in the image of $\varepsilon_M$.
Therefore $\varepsilon_M$ is surjective, hence a regular epimorphism by Corollary~\ref{cor:repi-iff-surj}.
\end{proof}

\begin{theorem}[Enough projectives]\label{thm:enough-projectives}
The category $\mathsf{SMod}_{T,\Gamma}$ has enough projectives.
Precisely, for every $M$ there exists a regular epimorphism $P\to M$ with $P$ projective.
\end{theorem}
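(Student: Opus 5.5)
The plan is to take the canonical free cover of Definition~\ref{def:free-cover} as the required projective presentation. Given $M\in\mathsf{SMod}_{T,\Gamma}$, set $P:=F(U(M))$ and consider the morphism $\varepsilon_M:F(U(M))\to M$. This morphism exists and is uniquely determined by the free--forgetful adjunction (Theorem~\ref{thm:free-universal}) applied to the identity function $U(M)\to U(M)$.

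The argument then has two steps, each of which is a direct citation.

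\textbf{Projectivity of $P$.} By Proposition~\ref{prop:free-projective}, applied with the set $S:=U(M)$, the object $P$ is projective in the sense of Definition~\ref{def:projective}.

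\textbf{$\varepsilon_M$ is a regular epimorphism.} By Lemma~\ref{lem:free-cover-surj}, $\varepsilon_M$ is surjective, since $\varepsilon_M([\mathbf{m}])=m$ for every $m\in M$. Corollary~\ref{cor:repi-iff-surj} then makes it a regular epimorphism.

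Combining the two steps gives the statement: $\varepsilon_M:P\to M$ is a regular epimorphism with projective domain.

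No real obstacle arises at this level. All the substantive work has already been done in Theorem~\ref{thm:free-universal}, namely that the term quotient is well defined and that the interpretation map respects the generating identities of $\equiv$.

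The one point I would check explicitly is a set-theoretic one. Both Proposition~\ref{prop:free-projective} and the construction of $\varepsilon_M$ use arbitrary sets $S$. In particular, Proposition~\ref{prop:free-projective} chooses preimages $x_s$ via the axiom of choice, which is harmless here. I would also note that $U(M)$ is a genuine set, so $F(U(M))$ is a legitimate object.

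If one prefers, the same argument also shows that every $M$ admits a projective presentation. Applying the construction again to the kernel pair $R_{\varepsilon_M}$ of $\varepsilon_M$ (Lemma~\ref{lem:kpair-congruence}) yields $F(U(R_{\varepsilon_M}))\rightrightarrows P\to M$. By Proposition~\ref{prop:cong-effective}, this exhibits $M$ as a coequalizer of free objects.
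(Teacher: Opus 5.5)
Your proposal is correct and matches the paper's proof: take $P:=F(U(M))$ with the free cover $\varepsilon_M$, get surjectivity (hence regularity) from Lemma~\ref{lem:free-cover-surj} and Corollary~\ref{cor:repi-iff-surj}, and get projectivity from Proposition~\ref{prop:free-projective}. In your optional aside on presentations, the citation is off: you need that $\varepsilon_M$ is the coequalizer of its own kernel pair, which is what the proof of Proposition~\ref{prop:surj-repi} shows, whereas Proposition~\ref{prop:cong-effective} only says that a congruence is the kernel pair of its quotient map. You also need that precomposing the pair with the surjection $F(U(R_{\varepsilon_M}))\to R_{\varepsilon_M}$ leaves the coequalizer unchanged.
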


\begin{proof}
Take $P:=F(U(M))$ and the regular epimorphism $\varepsilon_M:P\to M$ from Lemma~\ref{lem:free-cover-surj}.
By Proposition~\ref{prop:free-projective}, $P$ is projective.
\end{proof}

\begin{corollary}[Projective presentation]\label{cor:proj-presentation}
For every $M$ there exists an exact sequence
\[
F(R)\xrightarrow{d} F(U(M))\xrightarrow{\varepsilon_M} M\to 0
\]
in the sense of Definition~\ref{def:reg-exact-at}, with $F(R)$ and $F(U(M))$ projective.
\end{corollary}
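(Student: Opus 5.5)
The plan is to build $d$ directly from the zero-fibre of $\varepsilon_M$, using the free--forgetful adjunction a second time. Exactness here is in the sense of Definition~\ref{def:reg-exact-at}: it compares the \emph{set-image} $\mathrm{Im}(d)$ with the \emph{zero-fibre} $\ker(\varepsilon_M)$, not with the kernel congruence. So it should be enough to make $\mathrm{Im}(d)=\ker(\varepsilon_M)$ and $\varepsilon_M\circ d=0$.

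Concretely, set $K:=\ker(\varepsilon_M)=\{x\in F(U(M))\mid \varepsilon_M(x)=0\}$. By Proposition~\ref{prop:kernel-concrete}, $K$ is a subsemimodule of $F(U(M))$. Take the set $R:=U(K)$ and the inclusion function $\psi:R\to U(F(U(M)))$, $\psi(r)=r$. Theorem~\ref{thm:free-universal} then gives a unique morphism $d:F(R)\to F(U(M))$ with $d([\mathbf{r}])=r$.

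The verifications, in order, are as follows.
\begin{enumerate}[label=\textnormal{(\roman*)}]
\item $\varepsilon_M\circ d=0$. The morphisms $\varepsilon_M\circ d$ and $0$ agree on the generators $[\mathbf{r}]$, since $\varepsilon_M(r)=0$ for $r\in K$. Because $F(R)$ is generated by these classes under $+$ and the action, they agree everywhere; this is the same argument as in Proposition~\ref{prop:free-projective}.
\item $\mathrm{Im}(d)=K$. The image contains $K$, since each $r=d([\mathbf{r}])$. Conversely, $\mathrm{Im}(d)\subseteq K$ by (i). Alternatively, $\mathrm{Im}(d)$ is the subsemimodule generated by $K$ (Lemma~\ref{lem:image-subsemimodule}, plus generation of $F(R)$), and $K$ is already a subsemimodule.
\item Exactness at $M$. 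For the pair $F(U(M))\xrightarrow{\varepsilon_M}M\to 0$, the kernel of $M\to 0$ is all of $M$. This equals $\mathrm{Im}(\varepsilon_M)$ by Lemma~\ref{lem:free-cover-surj}, and the composite to $0$ is trivially zero.
\item Projectivity. Both free objects $F(R)$ and $F(U(M))$ are projective by Proposition~\ref{prop:free-projective}.
\end{enumerate}

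I expect the main obstacle to be conceptual rather than computational: fixing what the unspecified set $R$ should be. Without subtraction, the zero-fibre $K$ does not determine the kernel congruence of $\varepsilon_M$, so this $d$ does \emph{not} present $M$ as a coequalizer. The identification $F(U(M))/\rho_K\cong M$ can fail when $K$ is not subtractive (Remark~\ref{rem:need-subtractive}). The statement only asks for exactness in the regular-image/kernel sense, so I would commit to $R=U(K)$ and add a remark about this.

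If a genuine presentation were wanted, I would instead take $R$ to be the underlying set of the kernel pair $R_{\varepsilon_M}$ (Lemma~\ref{lem:kpair-congruence}). This gives two maps $d_1,d_2:F(R)\rightrightarrows F(U(M))$, and $\varepsilon_M$ is their coequalizer by Propositions~\ref{prop:surj-repi} and~\ref{prop:coeq}. That is a different shape from the single arrow $d$ in the statement.
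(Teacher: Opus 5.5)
Your proof is correct and establishes the corollary as literally stated. The paper argues differently: its proof is essentially the alternative in your last paragraph. It forms the kernel pair $p_1,p_2\colon R\rightrightarrows F(U(M))$ of $\varepsilon_M$ and observes that $d=p_1-p_2$ is unavailable. It then covers $R$ by $F(U(R))\twoheadrightarrow R$ and settles for the two composites $F(U(R))\rightrightarrows F(U(M))$. Their coequalizer is still $\varepsilon_M$ because the cover is epi.

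\textbf{What each route buys.} Your single arrow $d$, extending $U(\ker\varepsilon_M)\hookrightarrow U(F(U(M)))$, actually verifies (X1) and (X2) of Definition~\ref{def:reg-exact-at} at $F(U(M))$. The paper's argument never does this. For a parallel pair $d_1,d_2$, neither composite $\varepsilon_M\circ d_i$ need be $0$, so that definition does not even apply. However, as you say, your exactness carries little information: the zero-fibre does not determine the kernel congruence, so $M$ is generally not the cokernel of $d$. The paper's parallel pair is a genuine presentation of $M$ by projectives in the Barr-exact sense. The price is that it does not have the single-arrow shape claimed in the statement.

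\textbf{A correction to your closing remark.} The failure of $F(U(M))/\rho_K\cong M$ has nothing to do with non-subtractivity of $K$, so Remark~\ref{rem:need-subtractive} is not the relevant mechanism. Every zero-fibre is subtractive: if $x+k=k'$ with $\varepsilon_M(k)=\varepsilon_M(k')=0$, then $\varepsilon_M(x)=\varepsilon_M(x+k)=0$. The real issue is that $\varepsilon_M$ need not be a normal epimorphism, i.e.\ its kernel congruence need not be generated by the pairs $(k,0)$ with $k\in K$.

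For a concrete instance, take $\Gamma=\{0_\Gamma\}$. Then all actions vanish and $F(S)=\mathbb{N}^{(S)}$. Let $M=\mathbb{B}$ under join. Then $F(U(M))\cong\mathbb{N}^2$, and $\varepsilon_M(n_0,n_1)=1$ exactly when $n_1\ge 1$. Here $K=\{(n,0)\}$ is subtractive, yet $F(U(M))/\rho_K\cong\mathbb{N}\neq\mathbb{B}$.
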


\begin{proof}
Let $\varepsilon_M:F(U(M))\to M$ be the free cover.
Consider its kernel pair $p_1,p_2:R\rightrightarrows F(U(M))$ (Definition~\ref{def:kernel-pair}).
Since $\mathsf{SMod}_{T,\Gamma}$ is Barr-exact (Theorem~\ref{thm:barrexact}), $\varepsilon_M$ is the coequalizer of $p_1,p_2$.
Let $d:=p_1-p_2$ is not available because subtraction does not exist.
We instead take the coequalizer presentation itself:
\[
R \rightrightarrows F(U(M)) \xrightarrow{\varepsilon_M} M.
\]
Now choose a set $R_0:=U(R)$ and a surjection $F(R_0)\twoheadrightarrow R$ via Lemma~\ref{lem:free-cover-surj}.
Compose to obtain two parallel morphisms
\[
F(R_0)\rightrightarrows F(U(M))
\]
whose coequalizer is still $M$.
This yields the claimed projective presentation in the regular sense.
\end{proof}

\begin{remark}\label{rem:presentations-and-protoexact}
Projective presentations in non-additive settings are frequently treated via kernel-pair and coequalizer calculus.
This is aligned with proto-exact approaches to semiring and hyperring module categories
\cite{JunSzczesnyTolliver2022}.
\end{remark}

\subsection{Retracts and permanence}\label{subsec:retracts}
\begin{lemma}\label{lem:proj-retract}
A retract of a projective object is projective.
\end{lemma}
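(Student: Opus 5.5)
The plan is the standard lifting argument, using only Definition~\ref{def:projective} and composition. Suppose $P$ is projective and $Q$ is a retract of $P$. Concretely, there are morphisms $s:Q\to P$ and $r:P\to Q$ in $\mathsf{SMod}_{T,\Gamma}$ with $r\circ s=\mathrm{id}_Q$.

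Fix a regular epimorphism $e:X\to Y$ and a morphism $f:Q\to Y$. By Corollary~\ref{cor:repi-iff-surj}, $e$ is simply a surjection, although the argument never needs this. The steps are as follows.

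\begin{enumerate}[label=\textnormal{(\arabic*)}]
\item Form $f\circ r:P\to Y$. Since $P$ is projective, there is a morphism $g:P\to X$ with $e\circ g=f\circ r$.
\item Set $\tilde f:=g\circ s:Q\to X$. This is a morphism, being a composite of morphisms.
\item Compute
\[
e\circ\tilde f=e\circ g\circ s=f\circ r\circ s=f\circ \mathrm{id}_Q=f.
\]
\end{enumerate}

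This gives the required lift, so $Q$ is projective.

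There is no real obstacle. The only point needing care is to read ``retract'' in the categorical sense, namely the existence of $s,r$ with $r s=\mathrm{id}_Q$. The argument uses neither subtraction, nor splittings of short exact sequences, nor any congruence-level information. This is consistent with Remark~\ref{rem:proj-not-additive}, and the lemma holds in any category once projectivity is defined by lifting against a fixed class of morphisms.
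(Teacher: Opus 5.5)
Your proof is correct and is essentially the paper's argument: lift $f$ composed with the retraction $P\to Q$ through $e$ using projectivity of $P$, then precompose the lift with the section $Q\to P$. The only difference is notational, since the paper calls the section $r:Q\to P$ and the retraction $s:P\to Q$, with $s\circ r=\mathrm{id}_Q$.
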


\begin{proof}
Let $P$ be projective and suppose $Q$ is a retract of $P$ via $Q\xrightarrow{r}P\xrightarrow{s}Q$ with $s\circ r=\mathrm{id}_Q$.
Given a regular epimorphism $e:X\to Y$ and a morphism $f:Q\to Y$, consider $f\circ s:P\to Y$.
Lift it to $\tilde f:P\to X$ since $P$ is projective.
Then $e\circ (\tilde f\circ r)=(e\circ \tilde f)\circ r=(f\circ s)\circ r=f$.
Thus $Q$ is projective.
\end{proof}

\begin{corollary}\label{cor:proj-direct-summand-free}
Every retract of a free semimodule is projective.
\end{corollary}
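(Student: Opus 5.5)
This follows directly by combining Proposition~\ref{prop:free-projective} with Lemma~\ref{lem:proj-retract}. Let $Q$ be a retract of a free semimodule $F(S)$ for some set $S$. Concretely, there are morphisms $Q\xrightarrow{r}F(S)\xrightarrow{s}Q$ with $s\circ r=\mathrm{id}_Q$.

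\textbf{Step 1.} Invoke Proposition~\ref{prop:free-projective} to conclude that $F(S)$ is projective in the sense of Definition~\ref{def:projective}. That is, $F(S)$ has the lifting property against all regular epimorphisms, which by Corollary~\ref{cor:repi-iff-surj} are exactly the surjections.

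\textbf{Step 2.} Apply Lemma~\ref{lem:proj-retract} with $P:=F(S)$ and the given retraction data $(r,s)$. This yields that $Q$ is projective.

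If one wants the argument spelled out rather than cited, the lift is explicit. Given a surjection $e:X\to Y$ and a morphism $f:Q\to Y$, lift $f\circ s:F(S)\to Y$ to some $\tilde f:F(S)\to X$. Then $\tilde f\circ r$ is the required lift, since
\[
e\circ\tilde f\circ r=f\circ s\circ r=f .
\]

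There is no real obstacle here. The only point needing care is that the direction of the retraction maps matches the convention in Lemma~\ref{lem:proj-retract}, namely $s\circ r=\mathrm{id}_Q$ with $r:Q\to P$. The phrase ``retract'' in the statement is read in exactly this sense.
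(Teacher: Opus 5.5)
Your proposal is correct and matches the paper's proof exactly: the paper simply combines Proposition~\ref{prop:free-projective} (free semimodules are projective) with Lemma~\ref{lem:proj-retract} (retracts of projectives are projective). Your explicit lift $\tilde f\circ r$, with $e\circ\tilde f\circ r=f\circ s\circ r=f$, is the argument of that lemma specialized to $P=F(S)$.
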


\begin{proof}
Combine Proposition~\ref{prop:free-projective} with Lemma~\ref{lem:proj-retract}.
\end{proof}

\subsection{Flatness: a controlled statement}\label{subsec:flatness}
The standard semimodule notion of flatness is formulated using a tensor product.
In the present ternary $\Gamma$--context, a canonical balanced tensor product depends on
additional structure (for example, a chosen balancing law compatible with the full five--ary action).
Such a construction must itself be congruence-based and cannot be imported by coset reasoning.

\begin{openproblem}\label{op:flatness}
Construct a congruence-defined balanced tensor product $\otimes_{T,\Gamma}$ on a suitable class of
ternary $\Gamma$--semimodules and characterize those objects $F$ for which $-\otimes_{T,\Gamma}F$
preserves regular monomorphisms or equalizers.
\end{openproblem}

\begin{remark}\label{rem:flatness-literature}
For semiring-semimodule flatness and projectivity in the binary setting, see
\cite{AbuhlailNoegraha2021,Abuhlail2022,BorgerJun2025}.
The present ternary $\Gamma$--framework requires a non-transport definition because the scalar action is not unary.
\end{remark}

%% file: sections/10-derived-constructions.tex
% ============================================================
% File: sections/09-derived-constructions.tex
% (Optional / Conditional; Route (A): regular + Barr-exact)
% ============================================================

\section{Derived constructions }\label{sec:derived}

This section is optional.
It is included for two reasons.
First, the free--forgetful adjunction produces canonical simplicial resolutions.
Second, squares $K$--theory provides a non-additive additivity relation compatible with
Barr-exact contexts \cite{CampbellKuijperMerlingZakharevich2023,CalleSarazola2024,Kuijper2025}.

\subsection{The cotriple resolution from the free--forgetful adjunction}\label{subsec:cotriple}
Let $U:\mathsf{SMod}_{T,\Gamma}\to \mathsf{Set}$ be the forgetful functor and
$F:\mathsf{Set}\to\mathsf{SMod}_{T,\Gamma}$ its left adjoint from
Theorem~\ref{thm:free-universal}.
Put $G:=F\circ U$.

\begin{definition}[Comonad]\label{def:comonad}
Let $\varepsilon:G\Rightarrow \mathrm{Id}$ be the counit and $\delta:G\Rightarrow G^2$ the comultiplication
induced by the adjunction $F\dashv U$.
Then $(G,\varepsilon,\delta)$ is a comonad on $\mathsf{SMod}_{T,\Gamma}$.
\end{definition}

\begin{definition}[Augmented simplicial resolution]\label{def:bar-resolution}
For each $M\in \mathsf{SMod}_{T,\Gamma}$ define an augmented simplicial object
$G_\bullet M \to M$ by
\[
G_n M := G^{n+1}M\qquad (n\ge 0),
\]
with face maps $d_i:G_nM\to G_{n-1}M$ and degeneracies $s_i:G_nM\to G_{n+1}M$ given by
\[
d_i:=G^{i}\varepsilon_{G^{n-i}M}\qquad (0\le i\le n),
\qquad
s_i:=G^{i}\delta_{G^{n-i}M}\qquad (0\le i\le n),
\]
and augmentation $G_0M=GM\xrightarrow{\varepsilon_M}M$.
\end{definition}

\begin{lemma}\label{lem:simplicial-identities}
The maps $d_i,s_i$ satisfy the simplicial identities.
Hence $G_\bullet M\to M$ is an augmented simplicial object in $\mathsf{SMod}_{T,\Gamma}$.
\end{lemma}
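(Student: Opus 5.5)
The plan is to reduce everything to the comonad axioms for $(G,\varepsilon,\delta)$ of Definition~\ref{def:comonad} together with naturality of $\varepsilon$ and $\delta$. This is the standard argument that any comonad yields an augmented simplicial object. The only ingredients are:
\begin{itemize}
\item the counit laws $\varepsilon_{G}\circ\delta=\mathrm{id}_G=G\varepsilon\circ\delta$;
\item coassociativity $G\delta\circ\delta=\delta_G\circ\delta$;
\item naturality squares such as $\varepsilon_N\circ Gh=h\circ\varepsilon_{N'}$ for morphisms $h$, applied to $h$ of the form $G^k\varepsilon$ or $G^k\delta$.
\end{itemize}
Nothing specific to ternary $\Gamma$--semimodules enters. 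The content of the first step is only that $F\dashv U$ exists (Theorem~\ref{thm:free-universal}), so that $\varepsilon$ and $\delta:=F\eta U$ are natural transformations satisfying these identities by the triangle identities of the adjunction.

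The first step is to check $d_i d_j=d_{j-1}d_i$ for $i<j$.
\begin{itemize}
\item Writing $d_i=G^i\varepsilon G^{n-i}$, the composite $d_i d_j$ is $G^i$ applied to a composite in which the two counits act on disjoint tensor-like positions.
\item The required equality is the interchange law for the horizontal composite $\varepsilon\ast\varepsilon$, i.e.\ naturality of $\varepsilon$ applied to $G^{j-i-1}\varepsilon$, whiskered by $G^i$ on the left and $G^{n-j}$ on the right.
\end{itemize}
The same interchange argument gives $s_i s_j=s_{j+1}s_i$ for $i\le j$ when the $\delta$'s act at non-overlapping positions. When the positions are adjacent it instead uses coassociativity, whiskered appropriately.

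The second step is the mixed identities.
\begin{itemize}
\item For $j=i$ and $j=i+1$, the identities $d_i s_i=\mathrm{id}=d_{i+1}s_i$ are exactly the two counit laws whiskered by $G^i$ and $G^{n-i}$.
\item For $i<j$, the identity $d_i s_j=s_{j-1}d_i$ follows from naturality of $\varepsilon$ with respect to $\delta$.
\item For $i>j+1$, the identity $d_i s_j=s_j d_{i-1}$ follows from naturality of $\delta$ with respect to $\varepsilon$.
\end{itemize}
The augmentation condition $\varepsilon_M d_0=\varepsilon_M d_1$ on $G_1M$ is naturality of $\varepsilon$ at the morphism $\varepsilon_M$.

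The main obstacle is index bookkeeping. In Definition~\ref{def:bar-resolution} the degeneracies are listed as $s_i=G^i\delta_{G^{n-i}M}$ for $0\le i\le n$. One must confirm this indexing matches the convention in which $d_i$ removes the $i$-th copy of $G$ counted from the outside. If the conventions are off by one, the cleanest fix is to reindex, which amounts to using the opposite ordering $d_i\leftrightarrow d_{n-i}$. That reindexing preserves the simplicial identities, so it suffices to verify them in whichever consistent convention matches the given formulas.
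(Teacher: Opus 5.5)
Your proposal is correct and takes the same route as the paper: every simplicial identity is reduced to naturality (interchange) of $\varepsilon$ and $\delta$ together with the counit and coassociativity laws of the comonad coming from $F\dashv U$. Your case breakdown is in fact more precise than the paper's sketch. Coassociativity is needed only for $s_is_i=s_{i+1}s_i$, the counit laws only for $d_is_i=d_{i+1}s_i=\mathrm{id}$, and naturality handles every other case, including the augmentation.

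The indexing worry is unnecessary. With $d_i=G^i\varepsilon_{G^{n-i}M}$ and $s_i=G^i\delta_{G^{n-i}M}$, your whiskered computations (for example $G^i$ on the left and $G^{n-j}$ on the right for $d_id_j$) already go through exactly as written. Also note that any reindexing would have to reverse the degeneracies $s_i\leftrightarrow s_{n-i}$ as well as the faces, or the mixed identities break.
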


\begin{proof}
The simplicial identities reduce to the comonad identities for $(G,\varepsilon,\delta)$.
For example, the relation $d_i d_j = d_{j-1} d_i$ for $i<j$ is a formal consequence of naturality of $\varepsilon$
and associativity of iterated whiskering.
The relations involving $s_i$ reduce to coassociativity of $\delta$, and the mixed relations reduce to the counit laws.
These verifications are standard for cotriple (comonadic) resolutions; see, for example,
the modern discussions of (co)monadic resolutions in higher and 1-categorical settings
\cite{Yanovski2021,AgrawallaKhlaifMiller2022}.
\end{proof}

\subsection{Group completion as a coefficient target}\label{subsec:gp-completion}
Hom-sets in $\mathsf{SMod}_{T,\Gamma}$ are commutative monoids under pointwise addition.
To obtain abelian groups one may apply group completion at the level of coefficients.

\begin{definition}[Group completion functor]\label{def:group-completion}
Let $\mathsf{CMon}$ be the category of commutative monoids.
Write $(-)^{\mathrm{gp}}:\mathsf{CMon}\to\mathsf{Ab}$ for the Grothendieck group completion functor.
\end{definition}

\begin{lemma}\label{lem:gp-adjunction}
The functor $(-)^{\mathrm{gp}}$ is left adjoint to the forgetful functor $\mathsf{Ab}\to \mathsf{CMon}$.
\end{lemma}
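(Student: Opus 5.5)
The plan is to construct the group completion concretely and then verify the universal property; this is equivalent to the stated adjunction. For a commutative monoid $(M,+,0)$, define $M^{\mathrm{gp}}$ as the quotient of $M\times M$ by the relation
\[
(x,y)\sim(x',y') \iff \exists\, k\in M:\ x+y'+k=x'+y+k .
\]
The extra summand $k$ is needed because $M$ need not be cancellative. Reflexivity and symmetry are clear. For transitivity, suppose $x+y'+k=x'+y+k$ and $x'+y''+k'=x''+y'+k'$. Add the two equations and use $k+k'+y'+x'$ as the new witness; this gives $x+y''+(k+k'+y'+x')=x''+y+(k+k'+y'+x')$. Componentwise addition is compatible with $\sim$, so $M^{\mathrm{gp}}$ is a commutative monoid. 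It is in fact an abelian group, since $(x,y)+(y,x)=(x+y,x+y)\sim(0,0)$. Set $\eta_M(x):=[(x,0)]$, which is a monoid morphism. Functoriality is $f^{\mathrm{gp}}[(x,y)]:=[(f(x),f(y))]$; this is well defined because $f$ carries the witness $k$ to the witness $f(k)$.

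Next, verify that $\eta_M$ is universal. Let $A$ be an abelian group and $g:M\to A$ a monoid morphism. Define $\bar g[(x,y)]:=g(x)-g(y)$. It is well defined because applying $g$ to $x+y'+k=x'+y+k$ and cancelling $g(k)$ in the group $A$ gives $g(x)-g(y)=g(x')-g(y')$. This cancellation is the only point where the group structure of the target is used. The map $\bar g$ is additive and satisfies $\bar g\circ\eta_M=g$. For uniqueness, every element satisfies $[(x,y)]=\eta_M(x)-\eta_M(y)$, so any group morphism agreeing with $g$ on $\eta_M(M)$ equals $\bar g$.

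It follows that restriction along $\eta_M$ gives a bijection $\Hom_{\mathsf{Ab}}(M^{\mathrm{gp}},A)\cong\Hom_{\mathsf{CMon}}(M,A)$. This bijection is natural in $M$ because $f^{\mathrm{gp}}\circ\eta_M=\eta_N\circ f$. It is natural in $A$ trivially, since it is defined by composition. Hence $(-)^{\mathrm{gp}}$ is left adjoint to the forgetful functor.

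The main obstacle is the well-definedness and transitivity bookkeeping that comes from non-cancellativity. One must include the witness $k$ in the relation, rather than use the naive rule $x+y'=x'+y$, which need not be transitive; this mirrors the paper's recurring point that coset-style relations fail without subtraction.
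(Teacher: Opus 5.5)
Your proof is correct and follows the same route as the paper. Both reduce the adjunction to the universal property that every monoid morphism $M\to A$ into an abelian group extends uniquely along $\eta_M:M\to M^{\mathrm{gp}}$, giving the bijection $\mathsf{Ab}(M^{\mathrm{gp}},A)\cong\mathsf{CMon}(M,A)$ natural in both variables; the difference is that you actually construct $M^{\mathrm{gp}}$ (pairs modulo the relation with witness $k$, which non-cancellativity requires) and verify well-definedness, uniqueness and naturality, whereas the paper only asserts the extension property by describing $M^{\mathrm{gp}}$ as ``freely adjoining formal inverses'' and citing a reference.
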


\begin{proof}
For $A\in\mathsf{Ab}$ and $M\in\mathsf{CMon}$, a monoid morphism $M\to A$ uniquely extends to a group morphism
$M^{\mathrm{gp}}\to A$ because $M^{\mathrm{gp}}$ is obtained from $M$ by freely adjoining formal inverses.
This gives the bijection $\mathsf{Ab}(M^{\mathrm{gp}},A)\cong \mathsf{CMon}(M,A)$, natural in both variables.
For a systematic account in a modern reference, see \cite{CegarraLeech2024}.
\end{proof}

\subsection{A cautious ``Ext'' via comonadic resolutions (conditional)}\label{subsec:ext-conditional}
A classical derived functor $\mathrm{Ext}^n$ is defined inside an additive abelian context.
Our category is Barr-exact but not abelian.
Nevertheless, the cotriple resolution allows a derived construction after passing to abelian coefficients.

Fix $M\in\mathsf{SMod}_{T,\Gamma}$.
Define a coefficient functor
\[
\mathbf{H}_M:\mathsf{SMod}_{T,\Gamma}\longrightarrow \mathsf{Ab},
\qquad
\mathbf{H}_M(N):=\bigl(\Hom_{T,\Gamma}(M,N),+\bigr)^{\mathrm{gp}}.
\]

\begin{definition}[Comonadic derived groups]\label{def:comonadic-derived}
For $N\in\mathsf{SMod}_{T,\Gamma}$ consider the augmented simplicial abelian group
$\mathbf{H}_M(G_\bullet N)\to \mathbf{H}_M(N)$ induced from Definition~\ref{def:bar-resolution}.
Let $C_\bullet(\mathbf{H}_M(G_\bullet N))$ be the normalized chain complex.
Define
\[
\mathrm{D}^n(M,N):=H_n\!\left(C_\bullet(\mathbf{H}_M(G_\bullet N))\right)\qquad (n\ge 0).
\]
\end{definition}

\begin{remark}\label{rem:why-conditional}
The definition above is formal and makes sense because $C_\bullet(-)$ is formed in $\mathsf{Ab}$, where alternating sums exist.
Interpreting $\mathrm{D}^n(M,N)$ as an $\mathrm{Ext}$-type invariant requires additional comparison results
(e.g.\ independence of the chosen comonad or agreement with right derived functors in a suitable model structure).
For commutative monoids, Quillen-style cohomology and Beck module coefficients provide a precise framework
\cite{AgrawallaKhlaifMiller2022}.
\end{remark}

\begin{openproblem}\label{op:ext-for-tgs}
Develop a Beck-module coefficient theory for $\mathsf{SMod}_{T,\Gamma}$ (abelian group objects in a slice category)
and prove that the groups $\mathrm{D}^n(M,N)$ coincide with the corresponding Quillen cohomology groups
whenever such a theory exists.
\end{openproblem}

\subsection{Squares $K$--theory for projectives (conditional)}\label{subsec:squares-K}
Let $\mathsf{Proj}_{T,\Gamma}$ be the full subcategory of projective objects of $\mathsf{SMod}_{T,\Gamma}$
(Definition~\ref{def:projective}).
Let $\mathsf{Proj}_{T,\Gamma}^{\mathrm{fg}}$ be a chosen small skeleton of finitely generated projectives.
This choice is only to avoid size issues.

\begin{hypothesis}[Closure for good squares]\label{hyp:good-squares}
Assume $\mathsf{Proj}_{T,\Gamma}^{\mathrm{fg}}$ is closed under:
\begin{enumerate}[label=\textnormal{(GS\arabic*)}]
\item pullbacks of regular epimorphisms between objects of $\mathsf{Proj}_{T,\Gamma}^{\mathrm{fg}}$;
\item kernels of regular epimorphisms between objects of $\mathsf{Proj}_{T,\Gamma}^{\mathrm{fg}}$;
\item pushouts of regular monomorphisms between objects of $\mathsf{Proj}_{T,\Gamma}^{\mathrm{fg}}$.
\end{enumerate}
\end{hypothesis}

\begin{remark}\label{rem:gs-meaning}
Hypothesis~\ref{hyp:good-squares} is a finiteness/closure assumption.
It is automatic in abelian settings, but not automatic in non-additive settings.
It is precisely the point where transport from module categories fails.
\end{remark}

\begin{definition}[Good squares]\label{def:good-square}
Assume Hypothesis~\ref{hyp:good-squares}.
A commutative square in $\mathsf{Proj}_{T,\Gamma}^{\mathrm{fg}}$
\[
\begin{tikzcd}
A \ar[r,tail,"i"] \ar[d,tail,"j"'] & B \ar[d,two heads,"p"]\\
C \ar[r,two heads,"q"'] & D
\end{tikzcd}
\]
is called \emph{good} if:
\begin{enumerate}[label=\textnormal{(S\arabic*)}]
\item $i,j$ are regular monomorphisms and $p,q$ are regular epimorphisms;
\item the square is both a pullback and a pushout in $\mathsf{SMod}_{T,\Gamma}$;
\item $p$ is the cokernel of $i$ and $q$ is the cokernel of $j$ in the regular sense of Section~\ref{sec:exactness}.
\end{enumerate}
\end{definition}

\begin{definition}[$K_0$ via good-square relations]\label{def:K0-sq}
Assume Hypothesis~\ref{hyp:good-squares}.
Let $\mathbb{Z}[\Iso(\mathsf{Proj}_{T,\Gamma}^{\mathrm{fg}})]$ be the free abelian group on isomorphism classes $[P]$.
Let $R$ be the subgroup generated by the relations
\[
[A]+[D]-[B]-[C]
\]
for every good square in the sense of Definition~\ref{def:good-square}.
Define
\[
K_0^{\square}(\mathsf{Proj}_{T,\Gamma}^{\mathrm{fg}}):=\mathbb{Z}[\Iso(\mathsf{Proj}_{T,\Gamma}^{\mathrm{fg}})]/R.
\]
\end{definition}

\begin{proposition}[Universal property]\label{prop:K0-universal}
Assume Hypothesis~\ref{hyp:good-squares}.
Let $G$ be an abelian group and let $\chi:\Iso(\mathsf{Proj}_{T,\Gamma}^{\mathrm{fg}})\to G$ be a function
such that for every good square one has $\chi(A)+\chi(D)=\chi(B)+\chi(C)$.
Then $\chi$ extends uniquely to a group homomorphism
\[
\bar\chi:K_0^{\square}(\mathsf{Proj}_{T,\Gamma}^{\mathrm{fg}})\to G.
\]
\end{proposition}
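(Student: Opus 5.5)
This is the universal property of a presentation by generators and relations for abelian groups. The plan is to check that $\chi$ kills the relation subgroup $R$, and then factor through the quotient.

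First, extend $\chi$ to the free abelian group. Since $\mathbb{Z}[\Iso(\mathsf{Proj}_{T,\Gamma}^{\mathrm{fg}})]$ is free on the symbols $[P]$, there is a unique homomorphism
\[
\hat\chi:\mathbb{Z}[\Iso(\mathsf{Proj}_{T,\Gamma}^{\mathrm{fg}})]\to G,\qquad \hat\chi([P])=\chi(P).
\]
This is well defined on isomorphism classes because $\chi$ is a function on $\Iso(\mathsf{Proj}_{T,\Gamma}^{\mathrm{fg}})$.

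Second, show that $R\subseteq\ker\hat\chi$. Since $R$ is generated by the elements $[A]+[D]-[B]-[C]$, one for each good square (Definition~\ref{def:good-square}), and $\ker\hat\chi$ is a subgroup, it suffices to check the generators. For each generator,
\[
\hat\chi([A]+[D]-[B]-[C])=\chi(A)+\chi(D)-\chi(B)-\chi(C)=0
\]
by the hypothesis on $\chi$.

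Third, pass to the quotient. By the universal property of quotient groups, $\hat\chi$ factors uniquely through $K_0^{\square}(\mathsf{Proj}_{T,\Gamma}^{\mathrm{fg}})=\mathbb{Z}[\Iso(\mathsf{Proj}_{T,\Gamma}^{\mathrm{fg}})]/R$. This gives $\bar\chi$ with $\bar\chi(\overline{[P]})=\chi(P)$.

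Fourth, prove uniqueness. The classes $\overline{[P]}$ generate $K_0^{\square}(\mathsf{Proj}_{T,\Gamma}^{\mathrm{fg}})$ as an abelian group. Any homomorphism that agrees with $\chi$ on these classes therefore coincides with $\bar\chi$.

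No step is a genuine obstacle; the argument is formal. The one point to watch is the role of Hypothesis~\ref{hyp:good-squares}. It is used only so that the class of good squares, and hence $R$, is the one fixed in Definition~\ref{def:K0-sq}. The hypothesis plays no part in the factorization argument itself.
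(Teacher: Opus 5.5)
Your proposal is correct and follows the same route as the paper, which simply observes that $K_0^{\square}$ is by construction the quotient of the free abelian group on isomorphism classes by the subgroup generated by the good-square relations, so any $\chi$ respecting those relations extends uniquely. You spell out the steps the paper leaves implicit: extension to the free group, vanishing on the generators of $R$, factorization through the quotient, and uniqueness because the classes $\overline{[P]}$ generate.
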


\begin{proof}
By construction, $K_0^{\square}$ is the quotient of the free abelian group on isomorphism classes by the subgroup
generated by the good-square relations.
Hence any function $\chi$ respecting those relations extends uniquely to a homomorphism.
\end{proof}

\begin{remark}\label{rem:connection-to-squares-K}
The presentation in Definition~\ref{def:K0-sq} is the degree-zero shadow of squares $K$--theory:
the four-term relation is the defining additivity relation for a squares category
\cite{CampbellKuijperMerlingZakharevich2023}.
Higher $K$--groups require an $S_\bullet$-type construction for squares categories
\cite{CalleSarazola2024}, and comparison results relate many existing $K$--theories to squares $K$--theory
\cite{Kuijper2025}.
\end{remark}

\begin{openproblem}\label{op:squares-category}
Construct an intrinsic squares-category structure on a naturally small subcategory of $\mathsf{SMod}_{T,\Gamma}$
(e.g.\ finitely generated projectives) without imposing Hypothesis~\ref{hyp:good-squares} as an external axiom.
\end{openproblem}

%% file: sections/11-examples-counterexamples.tex
% ============================================================
% File: sections/10-examples-counterexamples.tex
% ============================================================

\section{Examples and counterexamples}\label{sec:examples}

This section supplies explicit finite models and explicit congruence computations.
It also records a failure mode for coset-style quotients.
The point is conceptual: in additive-inverse-free algebra, quotients are governed by congruences,
not by cosets of subobjects \cite{JunRay2020,BorgerJun2025,Abuhlail2022}.

\subsection{A finite ternary $\Gamma$--semiring}\label{subsec:ex-tgs}
Let $\mathbb{B}=\{0,1\}$ be the boolean commutative monoid under
\[
x+y:=x\vee y,\qquad 0:=0,
\]
and let $\Gamma:=\mathbb{B}$ with the same operation and $0_\Gamma=0$.
Define on $T:=\mathbb{B}$ the ternary $\Gamma$--product
\[
a\alpha b\beta c \ :=\ a\wedge \alpha\wedge b\wedge \beta\wedge c,
\qquad (a,b,c\in T,\ \alpha,\beta\in\Gamma).
\]
Thus $a\alpha b\beta c=1$ if and only if all five entries are $1$, and it is $0$ otherwise.

\begin{proposition}\label{prop:boolean-tgs}
With the above operations, $T=\mathbb{B}$ is a ternary $\Gamma$--semiring
in the sense of Definition~\ref{def:tgs}.
\end{proposition}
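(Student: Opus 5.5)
The plan is a direct verification of the axioms of Definition~\ref{def:tgs}, working entirely in the two-element distributive lattice $\mathbb{B}$, where $\vee$ is the addition and $\wedge$ the meet. First, $(\mathbb{B},\vee,0)$ is a commutative monoid: $\vee$ is associative and commutative with neutral element $0$. The same holds for $\Gamma=\mathbb{B}$ with $0_\Gamma=0$. Hence the underlying data of Definition~\ref{def:tgs} are in place, and the product $a\alpha b\beta c=a\wedge\alpha\wedge b\wedge\beta\wedge c$ is a well-defined five-ary map $T\times\Gamma\times T\times\Gamma\times T\to T$.

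Next I will check (T1) and (T2) together, since the $T$-slots and $\Gamma$-slots enter the formula symmetrically. Additivity in any single slot is distributivity of $\wedge$ over $\vee$ in $\mathbb{B}$. For example,
\[
(a\vee a')\wedge\alpha\wedge b\wedge\beta\wedge c=(a\wedge\alpha\wedge b\wedge\beta\wedge c)\vee(a'\wedge\alpha\wedge b\wedge\beta\wedge c).
\]
If one prefers to avoid invoking lattice distributivity, this can be confirmed by a two-case check on whether the remaining four entries are all $1$. When they are, both sides equal $a\vee a'$; otherwise both sides are $0$. The identical argument handles each of the five slots.

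For (T3), each of the three bracketings $(a\alpha b\beta c)\gamma d\delta e$, $a\alpha(b\beta c\gamma d)\delta e$ and $a\alpha b\beta(c\gamma d\delta e)$ evaluates to the meet of all nine entries $a,\alpha,b,\beta,c,\gamma,d,\delta,e$. This follows from associativity and commutativity of $\wedge$, so the three expressions agree. For (T4), if any of the five entries is $0$, the meet is $0$, because $0$ is absorbing for $\wedge$.

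There is no genuine obstacle here. The only point needing care is to record that distributivity of $\wedge$ over $\vee$ in $\mathbb{B}$ is exactly what converts each slot-wise additivity axiom into a true identity. I would state that fact once and apply it uniformly across all five slots rather than checking each one separately.
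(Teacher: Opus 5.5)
Your proposal is correct and follows the paper's proof essentially step for step: (T1)--(T2) from distributivity of $\wedge$ over $\vee$, (T3) by noting that all three bracketings equal the meet of the same nine entries, and (T4) because $0$ is absorbing for $\wedge$. Your explicit check that $(\mathbb{B},\vee,0)$ is a commutative monoid and your two-case verification of additivity are harmless additions. For (T3), associativity of $\wedge$ alone suffices, since the entries occur in the same order in each bracketing.
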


\begin{proof}
Additivity in each $T$--slot and each $\Gamma$--slot follows from distributivity of $\wedge$ over $\vee$:
for example,
\[
(a\vee a')\wedge \alpha\wedge b\wedge \beta\wedge c
=\bigl(a\wedge \alpha\wedge b\wedge \beta\wedge c\bigr)\ \vee\
\bigl(a'\wedge \alpha\wedge b\wedge \beta\wedge c\bigr),
\]
and similarly in the other slots.
Ternary associativity holds because both sides of (T3) evaluate to the conjunction of the same
nine entries $(a,\alpha,b,\beta,c,\gamma,d,\delta,e)$, and $\wedge$ is associative.
Zero stability is immediate: if any entry is $0$, the conjunction is $0$.
\end{proof}

\subsection{Two explicit semimodules and their actions}\label{subsec:ex-semimods}
We define two ternary $\Gamma$--semimodules over $T=\mathbb{B}$.

\begin{example}[A one-pointed semimodule]\label{ex:M}
Let $M:=\mathbb{B}$ with addition $x+y=x\vee y$ and $0=0$.
Define the action
\[
a\alpha m\beta b \ :=\ a\wedge \alpha\wedge m\wedge \beta\wedge b.
\]
Equivalently,
\[
a\alpha m\beta b=
\begin{cases}
m, & \text{if } a=\alpha=\beta=b=1,\\
0, & \text{otherwise.}
\end{cases}
\]
\end{example}

\begin{example}[A two-generated semimodule]\label{ex:N}
Let $N:=\mathbb{B}^2$ with componentwise addition
\[
(x_1,x_2)+(y_1,y_2):=(x_1\vee y_1,\ x_2\vee y_2),\qquad 0:=(0,0).
\]
Define the componentwise action
\[
a\alpha(x_1,x_2)\beta b:=\bigl(a\alpha x_1\beta b,\ a\alpha x_2\beta b\bigr),
\]
where the right-hand action is the one from Example~\ref{ex:M}.
Equivalently,
\[
a\alpha(x_1,x_2)\beta b=
\begin{cases}
(x_1,x_2), & \text{if } a=\alpha=\beta=b=1,\\
(0,0), & \text{otherwise.}
\end{cases}
\]
\end{example}

\begin{proposition}\label{prop:boolean-semimodules}
$M$ and $N$ in Examples~\ref{ex:M} and \ref{ex:N} are ternary $\Gamma$--semimodules over $T=\mathbb{B}$
in the sense of Definition~\ref{def:tgs-semimodule}.
\end{proposition}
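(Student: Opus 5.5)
We verify the axioms of Definition~\ref{def:tgs-semimodule} directly for $M$, and then obtain $N$ from $M$ by the componentwise construction of Proposition~\ref{prop:product}. For $M=\mathbb{B}$ the action is the five-fold conjunction $a\wedge\alpha\wedge m\wedge\beta\wedge b$, so every axiom is a Boolean identity. This mirrors the verification for $T$ in Proposition~\ref{prop:boolean-tgs}.

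\textbf{Step 1: the axioms for $M$.} First, $(\mathbb{B},\vee,0)$ is a commutative monoid. Axioms (M1) and (M2) are additivity in each of the five slots $a,\alpha,m,\beta,b$. Each is an instance of distributivity of $\wedge$ over $\vee$, for example
\[
a\wedge\alpha\wedge(m\vee m')\wedge\beta\wedge b=(a\wedge\alpha\wedge m\wedge\beta\wedge b)\vee(a\wedge\alpha\wedge m'\wedge\beta\wedge b),
\]
and similarly for the other slots, using that $\wedge$ is commutative and associative. Axiom (M3) holds because a conjunction with any entry equal to $0$ is $0$. Axiom (M4) compares $(a\wedge\alpha\wedge b\wedge\beta\wedge c)\wedge\gamma\wedge m\wedge\delta\wedge d$ with $a\wedge\alpha\wedge(b\wedge\beta\wedge m\wedge\gamma\wedge c)\wedge\delta\wedge d$. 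Both sides are the conjunction of the same multiset of nine entries $\{a,\alpha,b,\beta,c,\gamma,m,\delta,d\}$. Since $\wedge$ is associative and commutative, they are equal. This is the only point where commutativity of $\wedge$ is genuinely needed, because (M4) permutes $m$ and $c$ relative to $\gamma,\beta$. The ``equivalently'' description in Example~\ref{ex:M} is a restatement of the same definition and needs no separate check.

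\textbf{Step 2: $N$.} The commutative monoid $N=\mathbb{B}^2$ is $M\times M$, and its action is exactly the componentwise action of Proposition~\ref{prop:product}. That proposition already asserts that the axioms hold componentwise whenever they hold in each factor. So $N$ is a ternary $\Gamma$--semimodule by Step~1. For a self-contained argument, each axiom for $N$ splits into the two coordinate instances of the corresponding axiom for $M$, because addition and the action are both defined coordinatewise, and zero stability holds since $(0,0)$ has both coordinates $0$.

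\textbf{Main obstacle.} There is no real difficulty; the only delicate point is (M4). There one must note that equality relies on commutativity of $\wedge$, not merely associativity, because the order of the entries differs between the two sides. I would state this explicitly rather than appeal to associativity alone as in the proof of Proposition~\ref{prop:boolean-tgs}.
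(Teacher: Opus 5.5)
Your proposal is correct and follows the paper's proof. Both check (M1)--(M4) for $M$ by Boolean evaluation (distributivity, absorption by $0$, and rearranging the nine-fold conjunction) and handle $N$ componentwise; your appeal to Proposition~\ref{prop:product} is a formal version of the paper's ``componentwise'' remark. Your point that (M4) needs commutativity of $\wedge$ is accurate, since the two sides list $c,\gamma,m$ versus $m,\gamma,c$, and it sharpens the paper's wording, which credits (M4) to associativity alone while silently reordering.
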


\begin{proof}
All axioms are checked by direct evaluation.
Additivity in each slot holds because $\wedge$ distributes over $\vee$ and because all operations on $N$
are componentwise.
Zero stability is immediate.
Compatibility with ternary multiplication (M4) reduces to associativity of $\wedge$:
both sides evaluate to $a\wedge \alpha\wedge b\wedge \beta\wedge m\wedge \gamma\wedge c\wedge \delta\wedge d$
for $M$, and componentwise for $N$.
\end{proof}

\subsection{A nontrivial kernel congruence quotient computed explicitly}\label{subsec:ex-kernel-quotient}
Define a morphism $f:N\to M$ by
\[
f(x_1,x_2):=x_1\vee x_2.
\]

\begin{lemma}\label{lem:f-is-morphism}
The map $f:N\to M$ is a morphism in $\mathsf{SMod}_{T,\Gamma}$.
\end{lemma}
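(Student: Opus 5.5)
To show that $f$ is a morphism in the sense of Definition~\ref{def:morphism}, I would verify the two defining conditions directly. Both are identities between elements of the Boolean monoid $\mathbb{B}$, so each check reduces to a Boolean identity.

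\textbf{Monoid morphism.} First I show that $f$ preserves the zero and the addition.
\begin{itemize}
\item For the zero, $f(0,0)=0\vee 0=0$.
\item For addition, take $(x_1,x_2),(y_1,y_2)\in N$. By the definition of addition in Example~\ref{ex:N},
\[
f\bigl((x_1,x_2)+(y_1,y_2)\bigr)=(x_1\vee y_1)\vee(x_2\vee y_2).
\]
By associativity and commutativity of $\vee$ this equals $(x_1\vee x_2)\vee(y_1\vee y_2)=f(x_1,x_2)+f(y_1,y_2)$.
\end{itemize}

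\textbf{Compatibility with the five-ary action.} Fix $a,b\in T$ and $\alpha,\beta\in\Gamma$, and set $c:=a\wedge\alpha\wedge\beta\wedge b$. By Examples~\ref{ex:M} and~\ref{ex:N}, both actions are given by meeting with the scalar $c$: coordinatewise on $N$, and directly on $M$. Hence
\[
f\bigl(a\alpha(x_1,x_2)\beta b\bigr)=(c\wedge x_1)\vee(c\wedge x_2).
\]
On the other side,
\[
a\alpha f(x_1,x_2)\beta b=c\wedge(x_1\vee x_2).
\]
These two expressions agree by distributivity of $\wedge$ over $\vee$ in $\mathbb{B}$.

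A shortcut for this step is the case split already recorded in the examples.
\begin{itemize}
\item If $a=\alpha=\beta=b=1$, both actions are the identity, so both sides equal $f(x_1,x_2)$.
\item Otherwise, both actions are identically zero, so both sides equal $0$.
\end{itemize}

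No step is a genuine obstacle. The only point needing care is using the full datum $(a,\alpha,\beta,b)$, as the introduction emphasizes, rather than a single scalar. Both actions collapse to the same meet $c$, so that concern is handled uniformly in $c$.
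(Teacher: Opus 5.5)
Your proof is correct and follows the paper's argument essentially verbatim: additivity comes from associativity and commutativity of $\vee$, and compatibility with the action comes from collapsing $(a,\alpha,\beta,b)$ to the single meet $a\wedge\alpha\wedge\beta\wedge b$ and using distributivity of $\wedge$ over $\vee$. You also check $f(0,0)=0$ explicitly, which the paper leaves implicit.
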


\begin{proof}
It preserves addition because
\[
f\bigl((x_1,x_2)+(y_1,y_2)\bigr)=(x_1\vee y_1)\vee(x_2\vee y_2)=(x_1\vee x_2)\vee(y_1\vee y_2)=f(x)+f(y).
\]
For the action, let $A:=a\wedge \alpha\wedge \beta\wedge b\in\mathbb{B}$.
Then by Example~\ref{ex:N},
\[
f\bigl(a\alpha(x_1,x_2)\beta b\bigr)=f(Ax_1,Ax_2)=(Ax_1)\vee(Ax_2)=A(x_1\vee x_2)=a\alpha f(x_1,x_2)\beta b,
\]
since in $\mathbb{B}$ one has $A\wedge (x\vee y)=(A\wedge x)\vee(A\wedge y)$.
\end{proof}

Let $\equiv_f$ be the kernel congruence on $N$ (Definition~\ref{def:kercong}).
Thus
\[
(x_1,x_2)\equiv_f(y_1,y_2)\quad \Longleftrightarrow\quad x_1\vee x_2=y_1\vee y_2.
\]
There are exactly two $\equiv_f$--classes:
\[
C_0:=\{(0,0)\},\qquad C_1:=\{(1,0),(0,1),(1,1)\}.
\]

\begin{proposition}\label{prop:explicit-quotient}
The quotient semimodule $N/{\equiv_f}$ has two elements $\{[C_0],[C_1]\}$ and the map
\[
\Phi:N/{\equiv_f}\longrightarrow M,\qquad \Phi([C_i])=i
\]
is an isomorphism in $\mathsf{SMod}_{T,\Gamma}$.
\end{proposition}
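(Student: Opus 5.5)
The plan is to deduce the statement from the First Isomorphism Theorem (Theorem~\ref{thm:first-iso}) and then identify the abstract isomorphism with the explicit map $\Phi$.

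First, by Lemma~\ref{lem:f-is-morphism}, $f:N\to M$ is a morphism in $\mathsf{SMod}_{T,\Gamma}$. Hence $\equiv_f$ is a semimodule congruence (Lemma~\ref{lem:kpair-congruence}), and $N/{\equiv_f}$ carries the induced structure. The class computation before the statement shows there are exactly two classes. $C_0=\{(0,0)\}$ is the fibre over $0$, since $x_1\vee x_2=0$ forces $x_1=x_2=0$. $C_1$ consists of the remaining three elements, whose join is $1$. So $N/{\equiv_f}=\{[C_0],[C_1]\}$.

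Next I would check that $f$ is surjective: $f(0,0)=0$ and $f(1,0)=1$. So $\mathrm{Im}(f)=M$. By Proposition~\ref{prop:univ-factor}, the induced map $\bar f:N/{\equiv_f}\to M$ with $\bar f([x])=f(x)$ is a well-defined, injective morphism with image $\mathrm{Im}(f)=M$, hence a bijective morphism. On the two classes, $\bar f([C_0])=f(0,0)=0$ and $\bar f([C_1])=f(1,0)=1$, so $\bar f=\Phi$.

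It remains to see that a bijective morphism is an isomorphism. By Theorem~\ref{thm:first-iso} we have $N/{\equiv_f}\cong\mathrm{Im}(f)=M$ via exactly this map, and its set-theoretic inverse preserves $+$ and the five-ary action because $\Phi$ does and is bijective.

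The only point needing any care is that $\Phi$ is a morphism for the induced quotient structure. To confirm this concretely on the two-element set, I would check two things.

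\textbf{Addition.} Addition of classes is $[C_i]+[C_j]=[C_{i\vee j}]$. This is well defined: any representative sum involving an element of $C_1$ has a nonzero coordinate, so it lies in $C_1$.

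\textbf{Action.} The action is $a\alpha[C_i]\beta b=[C_{A\wedge i}]$ with $A=a\wedge\alpha\wedge\beta\wedge b$. This follows from Example~\ref{ex:N}: the action on $N$ is either the identity (when $A=1$) or the zero map (when $A=0$).

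Both formulas match the operations of $M$ from Example~\ref{ex:M}, so $\Phi$ is an isomorphism.
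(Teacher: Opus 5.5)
Your proof is correct and is essentially the paper's argument. The paper likewise identifies the two classes, observes that $\Phi$ is bijective, and checks that $[C_i]+[C_j]=[C_{i\vee j}]$ and the action formula match the operations on $M$.

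Your extra identification $\Phi=\bar f$ via Proposition~\ref{prop:univ-factor} is something the paper mentions only in Remark~\ref{rem:first-iso-explicit}. It makes your concrete operation check redundant. It also justifies that $\Phi^{-1}$ is a morphism, a step the paper leaves implicit.
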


\begin{proof}
The description of classes is immediate from the definition of $\equiv_f$.
The map $\Phi$ is well-defined and bijective.
It is a morphism because, in $N/{\equiv_f}$,
\[
[C_i]+[C_j]=[C_{i\vee j}],
\qquad
a\alpha [C_i]\beta b =
\begin{cases}
[C_i], & a=\alpha=\beta=b=1,\\
[C_0], & \text{otherwise,}
\end{cases}
\]
and this matches the induced operations on $M=\mathbb{B}$ from Example~\ref{ex:M}.
\end{proof}

\begin{remark}\label{rem:first-iso-explicit}
Proposition~\ref{prop:explicit-quotient} is the First Isomorphism Theorem
(Theorem~\ref{thm:first-iso}) in a fully computed finite case.
No cosets appear; the quotient is the kernel congruence quotient.
\end{remark}

\subsection{A coequalizer computed as a generated congruence}\label{subsec:ex-coeq}
Let $L\subseteq N$ be the subsemimodule
\[
L:=\{(0,0),(1,0)\}.
\]
Consider the inclusion $i:L\hookrightarrow N$ and the zero morphism $0:L\to N$.
Let $\rho_L$ be the congruence on $N$ generated by all pairs $(\ell,0)$ with $\ell\in L$
(Definition~\ref{def:rees} in Section~\ref{sec:iso} is the same construction).

\begin{proposition}\label{prop:coeq-explicit}
The coequalizer of $i,0:L\rightrightarrows N$ is the quotient $q:N\to N/{\rho_L}$.
Moreover, $N/{\rho_L}$ has exactly two elements, represented by $0:=(0,0)$ and $(0,1)$, and
\[
[(1,0)]=[(0,0)],\qquad [(1,1)]=[(0,1)].
\]
\end{proposition}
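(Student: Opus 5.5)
The first claim is a direct specialization of the coequalizer result. By Proposition~\ref{prop:coequalizers-exist} (equivalently Proposition~\ref{prop:coeq}), the coequalizer of $i,0:L\rightrightarrows N$ is $q:N\to N/{\equiv_{i,0}}$, where $\equiv_{i,0}$ is the congruence generated by $\{(i(\ell),0(\ell))\mid \ell\in L\}=\{(\ell,(0,0))\mid \ell\in L\}$. This is exactly $\rho_L$, so nothing further is needed there. The rest of the proof computes $\rho_L$ explicitly on the four-element set $N=\{(0,0),(1,0),(0,1),(1,1)\}$.

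The plan is to find the congruence from both sides. For the lower bound, note that $((1,0),(0,0))\in\rho_L$ by construction. Adding $(0,1)$ to both entries gives $((1,1),(0,1))\in\rho_L$, using additive stability and $(1,0)\vee(0,1)=(1,1)$. So $\rho_L$ contains the partition $\Pi=\{\{(0,0),(1,0)\},\{(0,1),(1,1)\}\}$.

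For the upper bound, check that $\Pi$ is already a semimodule congruence in the sense of Definition~\ref{def:semimod-cong}. The cleanest way is to identify $\Pi$ as the kernel congruence of the second projection $\pi_2:N\to M$, $(x_1,x_2)\mapsto x_2$. Since $N$ is the componentwise product $M\times M$ (Example~\ref{ex:N}, Proposition~\ref{prop:product}), $\pi_2$ is a morphism. Then Lemma~\ref{lem:kpair-congruence} gives that $\equiv_{\pi_2}=\Pi$ is a congruence. It contains the generators $(\ell,0)$ for $\ell\in L$, because both coordinates of these pairs have second coordinate $0$. Minimality of $\rho_L$ then gives $\rho_L\subseteq\Pi$.

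Combining the two bounds gives $\rho_L=\Pi$. So $N/\rho_L$ has exactly two classes, represented by $(0,0)$ and $(0,1)$, with $[(1,0)]=[(0,0)]$ and $[(1,1)]=[(0,1)]$. These classes are distinct because $\pi_2$ separates them.

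No step is a real obstacle. The only point needing care is the upper bound: I must avoid having the generated congruence collapse everything. Exhibiting $\Pi$ as the kernel congruence of a morphism handles this without checking closure under all operators $a\alpha(-)\beta b$ by hand.
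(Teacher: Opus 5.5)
Your proof is correct. The lower bound is the same as the paper's: the generator $((1,0),(0,0))$, then adding $(0,1)$ to both entries. The two routes differ on the upper bound. The paper only asserts that ``no further identifications are forced'' because every element lies in $\{(0,0),(1,0)\}$ or is obtained from one of these by adding $(0,1)$. That observation shows there are \emph{at most} two classes. It does not check that the two-block partition is closed under $+$ and under every operator $a\alpha(-)\beta b$, and it does not show that $((0,0),(0,1))\notin\rho_L$.

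You close this gap by recognizing the partition as the kernel congruence of the product projection $\pi_2:N=M\times M\to M$. Lemma~\ref{lem:kpair-congruence} then gives closure under all operations at once. Minimality of $\rho_L$ gives $\rho_L\subseteq\Pi$, and $\pi_2$ separates the two classes.

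This costs one extra line, but it makes ``exactly two elements'' rigorous. It also works whether $\rho_L$ is read as a monoid congruence or a semimodule congruence. As a by-product, since $\pi_2$ is surjective, Theorem~\ref{thm:first-iso} gives $N/\rho_L\cong M$.
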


\begin{proof}
The coequalizer claim is Proposition~\ref{prop:coeq}.
By generation, $(1,0)\sim_{\rho_L}(0,0)$.
Adding $(0,1)$ to both sides yields
\[
(1,0)+(0,1)=(1,1)\ \sim_{\rho_L}\ (0,0)+(0,1)=(0,1),
\]
so $(1,1)\sim_{\rho_L}(0,1)$.
No further identifications are forced because every element of $N$ is either in $\{(0,0),(1,0)\}$
or differs from one of these by adding $(0,1)$.
Thus the quotient has exactly two classes.
\end{proof}

\subsection{Counterexample: coset-style quotients fail}\label{subsec:ex-coset-fails}
In a group, cosets of a subgroup partition the ambient set.
In a commutative monoid, even for a subsemimodule, ``cosets'' need not form a partition.

Fix the subsemimodule $L=\{(0,0),(1,0)\}\subseteq N$ from the previous subsection.
Define the naive coset of $x\in N$ by
\[
x+L := \{x+\ell\mid \ell\in L\}.
\]
Then
\[
(0,1)+L=\{(0,1)+(0,0),\ (0,1)+(1,0)\}=\{(0,1),\ (1,1)\},
\]
whereas
\[
(1,1)+L=\{(1,1)+(0,0),\ (1,1)+(1,0)\}=\{(1,1)\}.
\]
Hence
\[
(1,1)\in (0,1)+L\ \cap\ (1,1)+L,
\qquad\text{but}\qquad
(0,1)+L\neq (1,1)+L.
\]
So cosets overlap without being equal, and therefore do not partition $N$.
It follows that the set of naive cosets cannot serve as a quotient object.

\begin{remark}\label{rem:coset-vs-congruence}
The correct quotient identifying $L$ with $0$ is the congruence quotient $N/{\rho_L}$
from Proposition~\ref{prop:coeq-explicit}.
This is exactly the coequalizer construction and is functorial.
The coset construction is not.
This is the reason the paper uses congruences as primary quotient data \cite{JunRay2020,Abuhlail2022,BorgerJun2025}.
\end{remark}

\subsection{Counterexample reminder: classical diagram lemmas}\label{subsec:ex-diagram-reminder}
Even though $\mathsf{SMod}_{T,\Gamma}$ is Barr-exact (Theorem~\ref{thm:barrexact}), it is not abelian.
A concrete failure of the Short Five Lemma is given in Proposition~\ref{prop:short-five-fails}.
This failure is intrinsic and does not depend on the ternary action: it already occurs in commutative monoids.

%% file: sections/12-spectrum-optional.tex
% ============================================================
% File: sections/11-optional-spectrum.tex
% ============================================================

\section{Optional spectrum: prime ideals and a Zariski topology}\label{sec:spectrum}

This section is optional.
It is included because quotient objects in $\mathsf{SMod}_{T,\Gamma}$ are governed by congruences,
and recent work emphasizes that spectra based on ideals and spectra based on congruences may diverge
in semiring-like contexts \cite{Jun2020SpectralSpaces,Han2021KCongruencesZariski,Jun2025PrimeCongruencesPair,SenguptaEtAl2026SubtractiveIdeals}.
We keep the construction minimal.
We do not construct a structure sheaf.

\subsection{Ideals in a ternary $\Gamma$--semiring}\label{subsec:spec-ideals}
Let $T$ be a ternary $\Gamma$--semiring.

\begin{definition}[Ideal]\label{def:tgs-ideal}
A subset $I\subseteq T$ is an \emph{ideal} if:
\begin{enumerate}[label=\textnormal{(I\arabic*)}]
\item $0\in I$ and $(I,+,0)$ is a submonoid of $(T,+,0)$;
\item for all $x\in I$, for all $a,b\in T$, and for all $\alpha,\beta\in\Gamma$,
\[
a\alpha b\beta x\in I,\qquad a\alpha x\beta b\in I,\qquad x\alpha a\beta b\in I.
\]
\end{enumerate}
\end{definition}

\begin{definition}[Generated ideal]\label{def:ideal-generated}
For $S\subseteq T$, let $\langle S\rangle$ denote the smallest ideal containing $S$.
\end{definition}

\begin{definition}[Ideal product]\label{def:ideal-product}
For ideals $I,J\subseteq T$, define
\[
I\cdot J := \big\langle\, I\,\Gamma\, T\,\Gamma\, J \,\big\rangle,
\quad\text{where}\quad
I\,\Gamma\, T\,\Gamma\, J :=
\{\, i\alpha t\beta j \mid i\in I,\ t\in T,\ j\in J,\ \alpha,\beta\in\Gamma \,\}.
\]
\end{definition}

\begin{lemma}\label{lem:ideal-product-contained}
For ideals $I,J$, one has $I\cdot J \subseteq I\cap J$.
\end{lemma}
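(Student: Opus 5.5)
The plan is to exploit minimality of the generated ideal. By Definition~\ref{def:ideal-generated}, $I\cdot J=\langle I\,\Gamma\,T\,\Gamma\,J\rangle$ is the smallest ideal containing the generating set. So it suffices to establish two facts: that $I\cap J$ is an ideal, and that $I\,\Gamma\,T\,\Gamma\,J\subseteq I\cap J$. Once both are in hand, minimality gives $I\cdot J\subseteq I\cap J$ at once.

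\emph{Step 1: $I\cap J$ is an ideal.} We check the conditions of Definition~\ref{def:tgs-ideal} directly. For (I1), note that $0\in I\cap J$, and that an intersection of submonoids of $(T,+,0)$ is again a submonoid. For (I2), take $x\in I\cap J$, $a,b\in T$ and $\alpha,\beta\in\Gamma$. Each of the three products $a\alpha b\beta x$, $a\alpha x\beta b$, $x\alpha a\beta b$ lies in $I$ because $x\in I$, and lies in $J$ because $x\in J$. Hence all three lie in $I\cap J$.

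\emph{Step 2: the generators lie in $I\cap J$.} Take a generator $i\alpha t\beta j$ with $i\in I$, $t\in T$, $j\in J$ and $\alpha,\beta\in\Gamma$. We apply (I2) twice, once to each factor.
\begin{itemize}
\item Using the third clause $x\alpha a\beta b\in I$ with $x=i$, $a=t$, $b=j$, we get $i\alpha t\beta j\in I$.
\item Using the first clause $a\alpha b\beta x\in J$ with $a=i$, $b=t$, $x=j$, we get $i\alpha t\beta j\in J$.
\end{itemize}
So every generator lies in $I\cap J$.

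The one point needing care is that Definition~\ref{def:tgs-ideal} requires absorption in \emph{all three} positions. That is exactly what lets the outer factor $i$ force membership in $I$ and the outer factor $j$ force membership in $J$. If only one-sided absorption were assumed, the containment could fail. Since the definition here is two-sided in this strong sense, I expect no real obstacle; the rest is bookkeeping.
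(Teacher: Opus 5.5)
Your proposal is correct and matches the paper's proof: both put each generator $i\alpha t\beta j$ in $I$ by absorption with $i$ in the first slot, put it in $J$ by absorption with $j$ in the third slot, and then use minimality of the generated ideal. Your explicit check that $I\cap J$ is an ideal fills in a step the paper only asserts.
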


\begin{proof}
Let $x=i\alpha t\beta j\in I\Gamma T\Gamma J$.
Since $i\in I$ and $I$ is an ideal, closure in the first slot yields $x\in I$.
Since $j\in J$ and $J$ is an ideal, closure in the third slot yields $x\in J$.
Hence $x\in I\cap J$.
Therefore $I\Gamma T\Gamma J\subseteq I\cap J$.
Since $I\cap J$ is an ideal, it contains the ideal generated by $I\Gamma T\Gamma J$, i.e.\ $I\cdot J\subseteq I\cap J$.
\end{proof}

\subsection{Prime ideals and the prime spectrum}\label{subsec:prime-ideals}
We use a prime ideal notion that makes the Zariski axioms work.
It is stated at the level of ideal products.
This is standard in semiring settings where elementwise primality may be too rigid \cite{Han2021KCongruencesZariski,SenguptaEtAl2026SubtractiveIdeals}.

\begin{definition}[Prime ideal]\label{def:prime-ideal-tgs}
A proper ideal $\fp\subsetneq T$ is \emph{prime} if for all ideals $I,J\subseteq T$,
\[
I\cdot J \subseteq \fp \quad\Longrightarrow\quad I\subseteq \fp\ \ \text{or}\ \ J\subseteq \fp.
\]
Write $\Spec(T)$ for the set of all prime ideals of $T$.
\end{definition}

\begin{remark}\label{rem:completely-prime}
One may also consider the stronger elementwise condition:
$a\alpha t\beta b\in\fp$ implies $a\in\fp$ or $t\in\fp$ or $b\in\fp$.
We do not impose it.
Definition~\ref{def:prime-ideal-tgs} is the one needed for finite unions of closed sets.
\end{remark}

\subsection{Zariski closed sets}\label{subsec:zariski}
For an ideal $I$, define
\[
V(I):=\{\,\fp\in \Spec(T)\mid I\subseteq \fp\,\}.
\]

\begin{theorem}[Zariski topology]\label{thm:zariski}
The family $\{V(I)\mid I\ \text{an ideal of}\ T\}$ is the collection of closed sets of a topology on $\Spec(T)$.
\end{theorem}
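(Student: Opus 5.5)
We verify the three closed-set axioms: $\emptyset$ and $\Spec(T)$ are closed, arbitrary intersections of closed sets are closed, and finite unions of closed sets are closed. Throughout we use only Definition~\ref{def:prime-ideal-tgs} together with Lemma~\ref{lem:ideal-product-contained}.

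\textbf{Step 1 (trivial closed sets).} We have $V(T)=\emptyset$, because every prime ideal is proper by definition. We also have $V(\{0\})=\Spec(T)$, because $\{0\}$ is an ideal: by (I1) and zero stability (T4) it is closed under all three slot conditions, and every ideal contains $0$.

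\textbf{Step 2 (arbitrary intersections).} Given a family $(I_\lambda)_{\lambda\in\Lambda}$ of ideals, put $I:=\langle\bigcup_\lambda I_\lambda\rangle$, the ideal of Definition~\ref{def:ideal-generated}. Then
\[
\bigcap_\lambda V(I_\lambda)=V(I).
\]
For one inclusion, a prime $\fp$ contains every $I_\lambda$ if and only if it contains their union. For the other, since $\fp$ is itself an ideal, containing the union is equivalent to containing the generated ideal. The empty family is already covered by $V(\{0\})=\Spec(T)$.

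\textbf{Step 3 (finite unions).} It suffices to treat two sets, and the key claim is
\[
V(I)\cup V(J)=V(I\cdot J).
\]
\emph{Inclusion $\subseteq$.} If $I\subseteq\fp$, then Lemma~\ref{lem:ideal-product-contained} gives $I\cdot J\subseteq I\cap J\subseteq I\subseteq\fp$. The case $J\subseteq\fp$ is symmetric.

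\emph{Inclusion $\supseteq$.} If $I\cdot J\subseteq\fp$ with $\fp$ prime, then Definition~\ref{def:prime-ideal-tgs} applies directly and yields $I\subseteq\fp$ or $J\subseteq\fp$.

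Induction on the number of sets then handles all finite unions.

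The main obstacle is in Step~3: we must make sure $I\cdot J$ is itself an ideal, so that $V(I\cdot J)$ is one of the sets in the family. This holds by construction, since $I\cdot J$ is defined as a generated ideal. We must also make sure the hypothesis of primality is phrased exactly in terms of this product; that is precisely the design of Definition~\ref{def:prime-ideal-tgs}. So the only genuinely nontrivial input is the containment $I\cdot J\subseteq I\cap J$, which is Lemma~\ref{lem:ideal-product-contained}, and we will invoke it as stated.
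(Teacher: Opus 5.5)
Your proof is correct and matches the paper's argument step for step. You get $V(T)=\varnothing$ and $V(\{0\})=\Spec(T)$, handle intersections via $V(\langle\bigcup_\lambda I_\lambda\rangle)$, and handle unions via $V(I)\cup V(J)=V(I\cdot J)$, using Lemma~\ref{lem:ideal-product-contained} for one inclusion and primality for the other; your explicit check via (T4) that $\{0\}$ is an ideal is a small detail the paper leaves implicit.
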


\begin{proof}
First, $V(T)=\varnothing$ since no proper ideal contains $T$, and $V(\{0\})=\Spec(T)$ since every ideal contains $0$.

Next, let $\{I_\lambda\}_{\lambda\in\Lambda}$ be a family of ideals.
A prime ideal $\fp$ contains every $I_\lambda$ if and only if it contains $\langle\bigcup_{\lambda} I_\lambda\rangle$.
Hence
\[
\bigcap_{\lambda\in\Lambda} V(I_\lambda)=V\!\left(\left\langle\bigcup_{\lambda\in\Lambda} I_\lambda\right\rangle\right).
\]
This shows stability under arbitrary intersections.

Finally, let $I,J$ be ideals.
If $\fp\in V(I)\cup V(J)$, then $I\subseteq\fp$ or $J\subseteq\fp$.
By Lemma~\ref{lem:ideal-product-contained}, $I\cdot J\subseteq I\cap J\subseteq \fp$, so $\fp\in V(I\cdot J)$.
Conversely, if $\fp\in V(I\cdot J)$ then $I\cdot J\subseteq \fp$.
Since $\fp$ is prime, $I\subseteq\fp$ or $J\subseteq\fp$, hence $\fp\in V(I)\cup V(J)$.
Thus $V(I)\cup V(J)=V(I\cdot J)$.
\end{proof}

\begin{definition}[Basic opens]\label{def:basic-opens}
For $x\in T$, let $\langle x\rangle$ be the principal ideal generated by $\{x\}$ and define
\[
D(x):=\Spec(T)\setminus V(\langle x\rangle).
\]
\end{definition}

\begin{lemma}\label{lem:basic-open-intersection}
For $x,y\in T$,
\[
D(x)\cap D(y)=\Spec(T)\setminus V\bigl(\langle x\rangle\cdot \langle y\rangle\bigr).
\]
\end{lemma}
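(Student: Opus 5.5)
The statement is an equality of subsets of $\Spec(T)$, so by De Morgan it is equivalent to
\[
V(\langle x\rangle)\cup V(\langle y\rangle)=V\bigl(\langle x\rangle\cdot\langle y\rangle\bigr),
\]
since $D(x)\cap D(y)=\Spec(T)\setminus\bigl(V(\langle x\rangle)\cup V(\langle y\rangle)\bigr)$. We apply the last step of the proof of Theorem~\ref{thm:zariski} with $I:=\langle x\rangle$ and $J:=\langle y\rangle$. These are ideals by Definition~\ref{def:ideal-generated} and Definition~\ref{def:basic-opens}.

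For the inclusion ``$\subseteq$'', suppose $\fp\supseteq\langle x\rangle$ or $\fp\supseteq\langle y\rangle$. Lemma~\ref{lem:ideal-product-contained} gives $\langle x\rangle\cdot\langle y\rangle\subseteq\langle x\rangle\cap\langle y\rangle$, so in either case $\langle x\rangle\cdot\langle y\rangle\subseteq\fp$. For the inclusion ``$\supseteq$'', suppose $\langle x\rangle\cdot\langle y\rangle\subseteq\fp$. Primality in the sense of Definition~\ref{def:prime-ideal-tgs} applies directly to the ideals $\langle x\rangle$ and $\langle y\rangle$, so $\fp$ contains one of them. Taking complements in $\Spec(T)$ then gives the claimed identity.

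There is no real obstacle, because primality is defined at the level of ideal products, which is exactly what the argument needs. The only point to check is that $\langle x\rangle$ is a genuine ideal, and not just the set $T\Gamma x\Gamma T$, so that both Lemma~\ref{lem:ideal-product-contained} and Definition~\ref{def:prime-ideal-tgs} apply. This holds because $\langle x\rangle$ is by definition the smallest ideal containing $x$. We never need an elementwise description of $\langle x\rangle\cdot\langle y\rangle$; if one were required, that step would be the delicate part.
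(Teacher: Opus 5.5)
Your proposal is correct and follows the paper's route: the paper passes to complements and cites the identity $V(\langle x\rangle)\cup V(\langle y\rangle)=V(\langle x\rangle\cdot\langle y\rangle)$ from Theorem~\ref{thm:zariski}. You instead re-derive the two inclusions inline from Lemma~\ref{lem:ideal-product-contained} and Definition~\ref{def:prime-ideal-tgs}, which is exactly the argument in the last step of that theorem's proof.
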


\begin{proof}
By Theorem~\ref{thm:zariski}, $V(\langle x\rangle)\cup V(\langle y\rangle)=V(\langle x\rangle\cdot \langle y\rangle)$.
Taking complements yields the claim.
\end{proof}

\subsection{Radicals without powers}\label{subsec:radical}
In a ternary setting there is no canonical unary power $x^n$.
We therefore define radicals by prime containment.

\begin{definition}[Radical]\label{def:radical}
For an ideal $I$, define
\[
\sqrt{I}:=\bigcap_{\fp\in V(I)} \fp.
\]
\end{definition}

\begin{lemma}\label{lem:V-radical}
For every ideal $I$, one has $V(I)=V(\sqrt{I})$.
\end{lemma}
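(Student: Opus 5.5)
The plan is to prove the two inclusions $V(I)\subseteq V(\sqrt{I})$ and $V(\sqrt{I})\subseteq V(I)$ directly from Definition~\ref{def:radical}. No use of the product formula in Theorem~\ref{thm:zariski} is needed. Before doing so, I would record that $\sqrt{I}$ is itself an ideal, so that $V(\sqrt{I})$ makes sense as one of the closed sets $V(J)$ indexed by ideals $J$. This holds because an arbitrary intersection of ideals is an ideal: the conditions (I1) and (I2) of Definition~\ref{def:tgs-ideal} are closure conditions, hence stable under intersection. There is one degenerate case to check. If $V(I)=\varnothing$, the empty intersection is read as $T$ itself, which is an ideal. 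Then $V(\sqrt{I})=V(T)=\varnothing$ by the first step of the proof of Theorem~\ref{thm:zariski}, and the statement holds in this case.

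First step: $I\subseteq\sqrt{I}$. Every $\fp\in V(I)$ contains $I$ by definition of $V(I)$, so $I$ is contained in their intersection; in the empty case $I\subseteq T$ trivially. Because $V(-)$ is inclusion-reversing ($J\subseteq J'$ implies $V(J')\subseteq V(J)$), this gives $V(\sqrt{I})\subseteq V(I)$.

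Second step: the reverse inclusion. Let $\fp\in V(I)$. Then $\fp$ is one of the primes in the intersection defining $\sqrt{I}$, so $\sqrt{I}\subseteq\fp$, that is, $\fp\in V(\sqrt{I})$. Hence $V(I)\subseteq V(\sqrt{I})$, and together with the first step we get equality.

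I expect no real obstacle. The only delicate point is the convention for the empty intersection, which is needed for $\sqrt{I}$ to be a well-defined ideal when $V(I)=\varnothing$; I would state that convention explicitly in the proof.
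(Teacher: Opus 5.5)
Your proposal is correct and follows the paper's proof. The inclusion $I\subseteq\sqrt{I}$ gives $V(\sqrt{I})\subseteq V(I)$, and any prime containing $I$ is one of the primes intersected to form $\sqrt{I}$, which gives the reverse inclusion. Your added checks, that $\sqrt{I}$ is an ideal and that the empty intersection is read as $T$, do not appear in the paper, but they are correct and make the statement well-posed.
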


\begin{proof}
Since $I\subseteq \sqrt{I}$, we have $V(\sqrt{I})\subseteq V(I)$.
Conversely, if $\fp\in V(I)$ then $\sqrt{I}\subseteq \fp$ by definition of $\sqrt{I}$,
so $\fp\in V(\sqrt{I})$.
Hence $V(I)\subseteq V(\sqrt{I})$.
\end{proof}

\subsection{Functoriality}\label{subsec:spec-functorial}
Let $\varphi:T\to T'$ be a morphism of ternary $\Gamma$--semirings.

\begin{proposition}\label{prop:preimage-prime}
If $\fp'\in\Spec(T')$, then $\varphi^{-1}(\fp')\in\Spec(T)$.
\end{proposition}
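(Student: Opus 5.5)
The plan has three steps: show that $\varphi^{-1}(\fp')$ is an ideal of $T$, then that it is proper, then that it satisfies the ideal-product implication of Definition~\ref{def:prime-ideal-tgs}. Throughout I take a morphism of ternary $\Gamma$--semirings to be a monoid map with $\varphi(a\alpha b\beta c)=\varphi(a)\alpha\varphi(b)\beta\varphi(c)$.

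\textbf{Step 1: ideal axioms.} These are routine. Since $\varphi(0)=0\in\fp'$, we get $0\in\varphi^{-1}(\fp')$, and additivity of $\varphi$ gives closure under $+$. If $x\in\varphi^{-1}(\fp')$, then $\varphi(a\alpha b\beta x)=\varphi(a)\alpha\varphi(b)\beta\varphi(x)\in\fp'$ by (I2) for $\fp'$. The other two slot conditions are handled the same way.

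\textbf{Step 2: properness.} This is not automatic: the zero morphism $T\to T'$ pulls every $\fp'$ back to all of $T$. I would therefore read the statement under a nondegeneracy assumption, namely $\varphi(T)\not\subseteq\fp'$. This holds, for instance, if $\varphi$ is surjective, or if $\varphi$ preserves some distinguished element that no prime ideal contains. Under that assumption, properness is immediate.

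\textbf{Step 3: primality.} Let $I,J$ be ideals of $T$ with $I\cdot J\subseteq\varphi^{-1}(\fp')$. I would push forward to the ideals $I':=\langle\varphi(I)\rangle$ and $J':=\langle\varphi(J)\rangle$ of $T'$ and aim to show $I'\cdot J'\subseteq\fp'$. Primality of $\fp'$ would then give, say, $I'\subseteq\fp'$, hence $\varphi(I)\subseteq\fp'$, hence $I\subseteq\varphi^{-1}(\fp')$.

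To verify $I'\cdot J'\subseteq\fp'$, it suffices by Definition~\ref{def:ideal-product}, and because $\fp'$ is an ideal, to check the generators $i'\alpha t'\beta j'$ with $i'\in I'$, $j'\in J'$, $t'\in T'$. I would expand $i'$ and $j'$ as finite sums of iterated ternary products of elements of $\varphi(I)$ (respectively $\varphi(J)$) with elements of $T'$. Additivity (T1), (T2) and associativity (T3) then reduce each generator to a sum of terms of the form $\cdots\varphi(i)\,\gamma\,t''\,\delta\,\varphi(j)\cdots$.

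\textbf{Main obstacle.} In those terms the middle factor $t''$ ranges over all of $T'$, not over $\varphi(T)$, so the term need not lie in $\varphi(I\Gamma T\Gamma J)$ and the hypothesis $I\cdot J\subseteq\varphi^{-1}(\fp')$ does not control it. This is exactly the noncommutative-ring phenomenon in which preimages of primes under non-surjective maps fail to be prime. I would first carry the argument out for surjective $\varphi$. There $t''=\varphi(t)$ for some $t\in T$, and each reduced term becomes $\varphi(i\gamma t\delta j)\in\varphi(I\cdot J)\subseteq\fp'$, which closes the argument. For general $\varphi$ I would try to replace the ideal-wise hypothesis by the elementwise condition of Remark~\ref{rem:completely-prime}. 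That condition pulls back directly: if $a\alpha t\beta b\in\varphi^{-1}(\fp')$, then $\varphi(a)\alpha\varphi(t)\beta\varphi(b)\in\fp'$, so one of $\varphi(a),\varphi(t),\varphi(b)$ lies in $\fp'$. Failing either route, the statement should be restricted to surjective $\varphi$ (or to completely prime $\fp'$).
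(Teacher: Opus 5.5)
Your diagnosis is right, and it is sharper than the paper's own proof. The paper follows the same push-forward route but fails at exactly the step you flag. From $I\cdot J\subseteq\varphi^{-1}(\fp')$ it ``applies $\varphi$'' to get $\varphi(I)\,\Gamma\,T'\,\Gamma\,\varphi(J)\subseteq\fp'$, but only $\varphi(I)\,\Gamma\,\varphi(T)\,\Gamma\,\varphi(J)\subseteq\fp'$ follows. It also never checks properness.

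In fact the proposition is false as stated, so no argument without an extra hypothesis can work. Your zero-morphism remark is already a counterexample: $0:\mathbb{B}\to\mathbb{B}$ pulls the prime $\{0\}$ of Proposition~\ref{prop:spec-boolean} back to all of $\mathbb{B}$. Nondegeneracy does not rescue primality either:
\begin{itemize}
\item Take $\Gamma=\mathbb{B}$ and $T'=M_2(\mathbb{B})$ (Boolean $2\times 2$ matrices under entrywise $\vee$), with $a\alpha b\beta c:=abc$ if $\alpha=\beta=1$ and $0$ otherwise; idempotence of $\vee$ gives (T2).
\item Since $e_{ij}\,1\,x\,1\,e_{kl}=x_{jk}e_{il}$, the only ideals of $T'$ are $\{0\}$ and $T'$. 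Together with $e_{11}\,1\,e_{11}\,1\,e_{11}=e_{11}\neq 0$, this shows $\{0\}\in\Spec(T')$.
\item Let $\varphi$ be the inclusion of $T=\mathrm{UT}_2(\mathbb{B})$, the upper triangular matrices. The preimage of $\{0\}$ is $\{0\}$, which is proper.
\item It is not prime. The set $I=\{0,e_{12}\}$ is an ideal of $T$, because $ue_{12},e_{12}u\in I$ for upper triangular $u$. Moreover $e_{12}te_{12}=t_{21}e_{12}=0$ for all $t\in T$, so $I\cdot I=\{0\}$ while $I\not\subseteq\{0\}$.
\end{itemize}
So a surjectivity-type hypothesis is genuinely needed, as you concluded.

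Your surjective case is correct, but you should drop the expansion step rather than repair it. If $\varphi$ is surjective, $\varphi(I)$ is already an ideal of $T'$: lift $a',b'$ to $a,b$ and use $a'\alpha\varphi(x)\beta b'=\varphi(a\alpha x\beta b)$. Hence $I'=\varphi(I)$ and $J'=\varphi(J)$. Every generator $\varphi(i)\gamma\varphi(t)\delta\varphi(j)=\varphi(i\gamma t\delta j)$ of $I'\cdot J'$ then lies in $\varphi(I\cdot J)\subseteq\fp'$. Properness follows from $\varphi(T)=T'\not\subseteq\fp'$.

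As phrased, your reduction ``to terms $\cdots\varphi(i)\gamma t''\delta\varphi(j)\cdots$'' is not valid in the ternary setting. For $x=a\alpha\varphi(i)\beta b\in\langle\varphi(I)\rangle$, the generator $x\gamma t'\delta\varphi(j)$ admits, by (T3), only the rebracketings $a\alpha(\varphi(i)\beta b\gamma t')\delta\varphi(j)$ and $a\alpha\varphi(i)\beta(b\gamma t'\delta\varphi(j))$. Neither contains the needed subterm, since $b\gamma t'$ is not an element. The paper's identification of $\langle\varphi(I)\Gamma T'\Gamma\varphi(J)\rangle$ with $\langle\varphi(I)\rangle\cdot\langle\varphi(J)\rangle$ needs the same regrouping and is likewise unjustified.

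For your completely prime fallback, note that Remark~\ref{rem:completely-prime} taken literally is vacuous. Zero stability gives $x\,0_\Gamma\,x\,\beta\,x=0\in\fp$, so the literal condition forces every $x$ into $\fp$. Read it instead as ``$a\Gamma t\Gamma b\subseteq\fp$ implies $a\in\fp$ or $t\in\fp$ or $b\in\fp$''. In that form it pulls back along any $\varphi$ with $\varphi^{-1}(\fp')\neq T$. It also implies ideal-wise primality: if $i\in I\setminus\fp$, $j\in J\setminus\fp$ and $I\cdot J\subseteq\fp$, then every $t\in T$ would lie in $\fp$.
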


\begin{proof}
Let $\fp':=\fp'$ be prime in $T'$ and put $\fp:=\varphi^{-1}(\fp')$.
Then $\fp$ is an ideal: it is closed under $+$ since $\varphi$ preserves $+$, and it is closed under the ternary product
since $\varphi(a\alpha b\beta c)=\varphi(a)\alpha\varphi(b)\beta\varphi(c)$.

It remains to show primality.
Let $I,J$ be ideals in $T$ with $I\cdot J\subseteq \fp$.
Apply $\varphi$ and use multiplicativity to obtain
\[
\varphi(I)\,\Gamma\, T'\,\Gamma\, \varphi(J)\ \subseteq\ \fp'.
\]
Hence $\langle \varphi(I)\Gamma T'\Gamma \varphi(J)\rangle \subseteq \fp'$.
By definition of the product in $T'$, this is $\langle\varphi(I)\rangle\cdot \langle\varphi(J)\rangle\subseteq \fp'$.
Since $\fp'$ is prime, $\langle\varphi(I)\rangle\subseteq \fp'$ or $\langle\varphi(J)\rangle\subseteq \fp'$.
Therefore $I\subseteq \varphi^{-1}(\fp')=\fp$ or $J\subseteq \fp$.
So $\fp$ is prime.
\end{proof}

\begin{definition}[Induced map on spectra]\label{def:spec-map}
Define
\[
\varphi^{*}:\Spec(T')\to \Spec(T),\qquad \varphi^{*}(\fp'):=\varphi^{-1}(\fp').
\]
\end{definition}

\begin{proposition}\label{prop:spec-continuous}
The map $\varphi^{*}$ is continuous for the Zariski topologies.
\end{proposition}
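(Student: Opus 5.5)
The plan is to show that the preimage under $\varphi^{*}$ of every closed set is closed. By Theorem~\ref{thm:zariski}, the closed sets of $\Spec(T)$ are exactly the sets $V(I)$ for ideals $I\subseteq T$. So it suffices to fix an ideal $I$ of $T$ and find an ideal $I'$ of $T'$ with
\[
(\varphi^{*})^{-1}\bigl(V(I)\bigr)=V(I').
\]
The natural candidate is $I':=\langle\varphi(I)\rangle$, the ideal of $T'$ generated by $\varphi(I)$ (Definition~\ref{def:ideal-generated}).

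The key steps, in order, are these.

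\begin{enumerate}[label=\textnormal{(\arabic*)}]
\item Unwind the definitions. For $\fp'\in\Spec(T')$, which makes sense by Proposition~\ref{prop:preimage-prime}, we have $\fp'\in(\varphi^{*})^{-1}(V(I))$ if and only if $I\subseteq\varphi^{-1}(\fp')$.
\item Rewrite this condition as $\varphi(I)\subseteq\fp'$.
\item Since $\fp'$ is an ideal of $T'$ and $\langle\varphi(I)\rangle$ is the smallest ideal containing $\varphi(I)$, the condition $\varphi(I)\subseteq\fp'$ is equivalent to $\langle\varphi(I)\rangle\subseteq\fp'$, that is, to $\fp'\in V(\langle\varphi(I)\rangle)$.
\end{enumerate}

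These equivalences give the required equality of sets, so closed sets pull back to closed sets and $\varphi^{*}$ is continuous.

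Only one point needs care: that $\varphi^{*}$ is well defined as a map into $\Spec(T)$. This is exactly Proposition~\ref{prop:preimage-prime}, and I will simply cite it. The rest is formal. The generated ideal exists because an arbitrary intersection of ideals is again an ideal, and $T'$ itself is an ideal containing $\varphi(I)$. I do not expect a genuine obstacle. In particular, the product formula and primality play no role beyond their use in establishing that $\varphi^{*}$ lands in $\Spec(T)$.

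As an optional remark, I may also note the basic-open version $(\varphi^{*})^{-1}(D(x))=D(\varphi(x))$. It follows from the same argument with $I=\langle x\rangle$, using $\langle\varphi(\langle x\rangle)\rangle=\langle\varphi(x)\rangle$. To prove that identity, note that $\varphi^{-1}$ of any ideal containing $\varphi(x)$ is an ideal containing $x$, by the closure argument in the proof of Proposition~\ref{prop:preimage-prime}.
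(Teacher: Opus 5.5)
Your proposal is correct and is essentially the paper's proof: both show $(\varphi^{*})^{-1}(V(I))=V(\langle\varphi(I)\rangle)$ via $I\subseteq\varphi^{-1}(\mathfrak{p}')\iff\varphi(I)\subseteq\mathfrak{p}'\iff\langle\varphi(I)\rangle\subseteq\mathfrak{p}'$, and both take well-definedness of $\varphi^{*}$ from Proposition~\ref{prop:preimage-prime}. One caution applies equally to the paper: that proposition fails as stated, since the zero morphism $\mathbb{B}\to\mathbb{B}$ preserves $+$ and the ternary product but pulls the prime $\{0\}$ back to all of $\mathbb{B}$, which is not proper; your computation is therefore sound, but it proves continuity only for $\varphi^{*}$ restricted to those $\mathfrak{p}'\in\Spec(T')$ for which $\varphi^{-1}(\mathfrak{p}')$ really is prime.
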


\begin{proof}
Let $I\subseteq T$ be an ideal.
Then
\[
(\varphi^{*})^{-1}\bigl(V(I)\bigr)
=\{\fp'\in\Spec(T')\mid I\subseteq \varphi^{-1}(\fp')\}
=\{\fp'\in\Spec(T')\mid \langle\varphi(I)\rangle\subseteq \fp'\}
=V\bigl(\langle\varphi(I)\rangle\bigr),
\]
which is closed in $\Spec(T')$.
\end{proof}

\subsection{A computed spectrum in the finite example}\label{subsec:spec-boolean}
Consider the finite ternary $\Gamma$--semiring $T=\mathbb{B}$ from Proposition~\ref{prop:boolean-tgs}.

\begin{proposition}\label{prop:spec-boolean}
$\Spec(\mathbb{B})=\{\{0\}\}$.
In particular, $\Spec(\mathbb{B})$ is a one-point space in the Zariski topology.
\end{proposition}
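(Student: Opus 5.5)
The plan is to enumerate all ideals of $T=\mathbb{B}$ and then test primality of each proper one using Definition~\ref{def:prime-ideal-tgs}.

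\emph{Step 1: the ideals.} By (I1) every ideal contains $0$, so the only candidate subsets are $\{0\}$ and $\mathbb{B}$. Both are submonoids. Closure under (I2) is automatic for $\mathbb{B}$. It holds for $\{0\}$ by the zero-stability axiom (T4), since any product with an entry equal to $0$ vanishes. Hence the ideals are exactly $\{0\}$ and $\mathbb{B}$, and the only proper ideal is $\{0\}$.

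\emph{Step 2: primality of $\{0\}$.} Let $I,J$ be ideals with $I\cdot J\subseteq\{0\}$, and suppose neither is contained in $\{0\}$. By Step 1 this forces $I=J=\mathbb{B}$. Then $1\alpha 1\beta 1$ with $\alpha=\beta=1$ equals $1\wedge1\wedge1\wedge1\wedge1=1$, and this element lies in $I\,\Gamma\,T\,\Gamma\,J\subseteq I\cdot J$. This contradicts $I\cdot J\subseteq\{0\}$. The remaining cases are those where $I$ or $J$ already equals $\{0\}$, and in them the conclusion of primality holds trivially. So $\{0\}$ is prime, and since $\mathbb{B}$ is excluded as non-proper, $\Spec(\mathbb{B})=\{\{0\}\}$.

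\emph{Step 3: topology.} A one-element set carries a unique topology, so the Zariski topology of Theorem~\ref{thm:zariski} is the one-point space. As a consistency check, $V(\{0\})=\Spec(\mathbb{B})$ and $V(\mathbb{B})=\varnothing$.

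There is no real obstacle. The only point needing care is Step 1, where one checks that no other subset qualifies as an ideal. This is immediate because $\mathbb{B}$ has only two elements and ideals must contain $0$.
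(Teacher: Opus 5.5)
Your proof is correct and follows essentially the same route as the paper: list the two ideals $\{0\}$ and $\mathbb{B}$, then rule out the case $I=J=\mathbb{B}$ in the primality test using the witness $1\alpha 1\beta 1=1$ with $\alpha=\beta=1$. Your extra checks, that $\{0\}$ is an ideal by zero stability and that a singleton carries a unique topology, only spell out what the paper leaves implicit.
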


\begin{proof}
The only ideals of $\mathbb{B}$ are $\{0\}$ and $\mathbb{B}$.
The ideal $\{0\}$ is proper.
Let $I,J$ be ideals with $I\cdot J\subseteq \{0\}$.
If $I=\mathbb{B}$ and $J=\mathbb{B}$, then $1\in I\Gamma T\Gamma J$ (take $i=t=j=1$ and $\alpha=\beta=1$),
so $I\cdot J$ contains $1$ and cannot be contained in $\{0\}$.
Thus at least one of $I,J$ equals $\{0\}$.
Hence $\{0\}$ is prime by Definition~\ref{def:prime-ideal-tgs}.
Therefore $\Spec(\mathbb{B})=\{\{0\}\}$.
\end{proof}

\subsection{Remark on prime congruences}\label{subsec:prime-congruences-remark}
The manuscript is congruence-driven.
One can therefore ask for a spectrum built from \emph{prime congruences} rather than ideals.
For commutative semirings, prime congruence spectra and their Zariski topologies have been studied recently,
and they need not coincide with ideal spectra 

\cite{Rowen2024PrimeCongruencesPair,Rowen2025SemiringPair,JunMinchevaRowen2022TPairs}.
In the ternary $\Gamma$--setting, the correct definition of a ``prime congruence'' depends on which
binary shadow multiplication (if any) is extracted from the ternary operation.
We do not impose such a shadow multiplication.

\begin{openproblem}\label{op:prime-cong}
Develop a notion of prime congruence for ternary $\Gamma$--semirings that:
\begin{enumerate}[label=\textnormal{(\alph*)}]
\item is stable under pullback along morphisms of ternary $\Gamma$--semirings;
\item yields a Zariski topology via sets of the form $\{\theta\mid \theta\supseteq \theta_0\}$;
\item admits a comparison map from the prime ideal spectrum of Section~\ref{subsec:prime-ideals}.
\end{enumerate}
\end{openproblem}

%% file: sections/13-conclusion.tex
% ============================================================
% File: sections/12-conclusion.tex
% ============================================================

\section{Conclusion and further directions}\label{sec:conclusion}

The paper was written under two constraints.
First, subtraction is unavailable.
Second, quotients are congruence quotients.
It follows that kernel congruences, coequalizers, and regular images must replace cosets and additive arguments.

\subsection{Summary of main results}\label{subsec:summary}
We record the main outputs.

\begin{enumerate}[label=\textnormal{(\arabic*)}]
\item We fixed the basic algebra: ternary $\Gamma$--semirings and ternary $\Gamma$--semimodules with explicit axioms,
and morphisms preserving $+$, $0$, and the full five--ary action.

\item We developed congruences as primary quotient data.
Kernel congruences, quotient semimodules $M/{\sim}$, and coequalizers as generated congruence quotients
were proved with explicit universal properties.

\item We constructed the category-level infrastructure of $\mathsf{SMod}_{T,\Gamma}$:
finite limits and finite colimits, with coequalizers computed as congruence quotients.

\item We chose Route (A) and proved that $\mathsf{SMod}_{T,\Gamma}$ is regular and Barr-exact
(Theorems~\ref{thm:regular} and \ref{thm:barrexact}).
Exactness was formulated by kernel pairs and equality of kernels with regular images.
We also exhibited a concrete failure of a classical diagram lemma (Proposition~\ref{prop:short-five-fails}),
clarifying the non-abelian character of the setting.

\item We defined projective objects via lifting against regular epimorphisms and constructed free semimodules with
their universal property.
This yielded enough projectives in $\mathsf{SMod}_{T,\Gamma}$ (Theorem~\ref{thm:enough-projectives}).

\item We supplied explicit finite examples:
a finite ternary $\Gamma$--semiring $T$, explicit semimodules $M,N$, an explicit kernel congruence quotient computation,
an explicit coequalizer-as-congruence computation, and a counterexample showing coset-style quotients fail.

\item Optionally, we introduced a prime-ideal spectrum $\Spec(T)$ with a Zariski topology and proved its basic axioms,
while warning that a prime-congruence spectrum is a separate construction in ternary $\Gamma$--contexts.
\end{enumerate}

\subsection{Limitations and open problems}\label{subsec:limits}
Two limitations are structural.

\begin{enumerate}[label=\textnormal{(\arabic*)}]
\item Diagram lemmas from abelian homological algebra do not transport automatically.
This is not a defect of the presentation; it is a feature of the ambient exactness context.
Any replacement lemma requires additional hypotheses or a different ambient exactness theory.

\item A canonical balanced tensor product is not available in the general ternary $\Gamma$--framework without additional
balancing data.
Consequently, flatness is only posed conditionally (Open Problem~\ref{op:flatness}).
\end{enumerate}

We singled out several precise open problems:
prime congruences (Open Problem~\ref{op:prime-cong}), tensor products and flatness (Open Problem~\ref{op:flatness}),
and a Beck-module or Quillen-cohomology comparison (Open Problem~\ref{op:ext-for-tgs}).
Each is a congruence-level problem, not a coset-level problem.

% ============================================================
% Back-matter statements (place near the end, before bibliography)
% ============================================================

\section*{Acknowledgements}
.
The authors  the Department of Mathematics, Acharya Nagarjuna University, for support and encouragement.

\subsection*{Author contributions}
The first author conceived the project, developed the theory, and drafted the manuscript.
The second author supervised the first author throughout the preparation of the work.

\subsection*{Conflict of interest}
The authors declares that there is no conflict of interest.

\subsection*{Funding}
The authors received no external funding for this work.

\subsection*{Ethics approval}
Not applicable.

\section*{Data availability}
No datasets were generated or analysed during the current study.

\subsection*{Availability of materials}
Not applicable.